\useunder{\uline}{\ul}{}
     \let\oldfootnote\footnote
     \def\footnote{\@ifstar\footnote@star\footnote@nostar}
     \def\footnote@star#1{{\let\thefootnote\relax\footnotetext{#1}}}
     \def\footnote@nostar{\oldfootnote}
\newcommand{\N}{\mathbf{N}}
\newcommand{\Z}{\mathbf{Z}}
\newcommand*{\defeq}{\mathrel{\rlap{%
                      \raisebox{0.3ex}{$\cdot$}}%
                      \raisebox{-0.3ex}{$\cdot$}}%
                      =}
\newcommand{\proj}{\mathbf{P}}
\newcommand{\Gm}{\mathbf{G}_m}
\newcommand{\Ga}{\mathbf{G}_a}
\newcommand{\dd}{\mathop{}\!\text{d}}
\DeclareMathOperator{\Proj}{Proj}
\DeclareMathOperator{\Aut}{Aut}
\DeclareMathOperator{\sm}{sm}
\DeclareMathOperator{\Stab}{Stab}
\DeclareMathOperator{\Supp}{Supp}
\DeclareMathOperator{\height}{ht}
\DeclareMathOperator{\Lie}{Lie}
\DeclareMathOperator{\SL}{SL}
\DeclareMathOperator{\Sp}{Sp}
\DeclareMathOperator{\Spin}{Spin}
\DeclareMathOperator{\rank}{rank}
\definecolor{links}{HTML}{A93226}
\definecolor{bluetto}{HTML}{1ABC9C}
\definecolor{cite}{HTML}{21618C}
\theoremstyle{plain}
\newtheorem{theorem}{Theorem}[section]
\newtheorem{lemma}[theorem]{Lemma}
\newtheorem{proposition}[theorem]{Proposition}
\newtheorem{corollary}[theorem]{Corollary}
\newtheorem{theorem*}{Theorem}
\newtheorem{proposition*}[theorem*]{Proposition}
\newtheorem{lemma*}[theorem*]{Lemma}
\newtheorem*{proposition**}{Proposition}
\theoremstyle{definition}
\newtheorem{definition}[theorem]{Definition}
\newtheorem{remark}[theorem]{Remark}
\newtheorem{example}[theorem]{Example}
\numberwithin{equation}{section}
\begin{document}

\title{PARABOLIC SUBGROUPS IN CHARACTERISTICS TWO AND THREE}

\author{Matilde Maccan}
\email{matilde.maccan@univ-rennes.fr}

\footnote*{Keywords: parabolic subgroup scheme, positive characteristic, homogeneous space}
\footnote*{2020 Mathematics Subject Classification:
Primary: 14M17, Secondary: 14L15, 17B20}
\date{\today}

\maketitle

\begin{abstract}
This text brings to an end the classification of non-reduced parabolic subgroups in positive characteristic, especially two and three: they are all obtained as intersections of parabolics having maximal reduced part. We prove this result and deduce a few geometric consequences on rational projective homogeneous varieties.
\end{abstract}

\tableofcontents

\section*{Introduction}
We work in the setting of affine group schemes of finite type
over an algebraically closed field $k$, for which a general reference is \cite{Milne}. We are interested in studying rational projective varieties, homogeneous under their automorphism group: they can be realised as quotients of semisimple groups by parabolic subgroup schemes.\\
In characteristic zero, fixing a semisimple group, a Borel subgroup and a maximal torus $G \supset B \supset T$, there is a bijection between parabolic subgroups containing $B$ and subsets of the set of simple roots of $G$: a parabolic subgroup is determined by the basis of the root system of a Levi subgroup. On the other hand, over a field of positive characteristic
  parabolic subgroups \emph{can be nonreduced}, which yields a richer behaviour and different geometric properties of homogeneous spaces; see for example \cite[Section 4]{HL93}, \cite{Lauritzen} and \cite[Theorem 3.1]{Totaro}.\\\\
  If the characteristic is at least $5$, or if the Dynkin diagram of $G$ is simply laced,
  Wenzel \cite{Wenzel}, Haboush and Lauritzen \cite{HL93} show that all parabolic subgroups of $G$ can be obtained from reduced ones by fattening with Frobenius kernels and intersecting. More precisely, let us denote as $G^m$ the kernel of the $m$-th iterated Frobenius morphism and $P^\alpha$ the maximal reduced parabolic subgroup generated by $B$ and all root subgroups $U_{-\beta}$ with $\beta$ ranging over the simple roots distinct from $\alpha$. Then all parabolic subgroups are of the form 
  \begin{align}
  \label{stand}
  G^{m_1}P^{\beta_1} \cap \ldots \cap G^{m_r} P^{\beta_r},
  \end{align}
  where $\beta_1,\ldots,\beta_r$ are simple roots of $G$. We call a parabolic of such form \emph{of standard type}.\\
  The proof of \cite{Wenzel} relies heavily on the structure constants 
  relative to a Chevalley basis of the Lie algebra of a simply connected semisimple group. By construction, such constants are independent of the characteristic and are integers with absolute value strictly less than five. The hypotheses that either $p\geq 5$ or that the Dynkin diagram has no multiple edges both guarantee that these constants do not vanish over $k$.\\\\
In \cite{Maccan}, we classify parabolic subgroups with maximal reduced part, focusing on non-simply laced groups in characteristic two or three: except for a group of type $G_2$ in characteristic two, these parabolics are all obtained by fattening $P^\alpha$ with the kernel of some class of isogenies (those with no central factor, a notion which slightly generalises the Frobenius morphism and which only arises in small characteristics). For the $G_2$ case, two exotic families of parabolic subgroups enter into the picture, coming from the existence of two maximal $p$-Lie subalgebras. We describe them explicitly, study the two underlying varieties, and show that they complete the classification in Picard rank one.\\\\
In this text, we are able to go further and get the full list of parabolic subgroups, in all types and characteristics. The main result is the following.
\begin{theorem*}
    \label{main_intro}
    Let $P$ be a parabolic subgroup of a semisimple group $G$ and let $\beta_1,\ldots, \beta_r$ be the simple roots of $G$ such that
    \[
    P_{\text{red}} = \bigcap_{i=1}^r P^{\beta_i}.
    \]
    Then
    \[
    P = \bigcap_{i=1}^r Q^i,
    \]where $Q^i$ is the smallest subgroup containing $P$ and $P^{\beta_i}$.\\
    In particular, $P$ is intersection of parabolic subgroups with maximal reduced part.
\end{theorem*}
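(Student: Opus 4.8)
The inclusion $P \subseteq \bigcap_i Q^i$ holds by construction, and since $P_{\text{red}} = \bigcap_j P^{\beta_j}$ is contained in $P^{\beta_i}$, the reduced part of $Q^i$ is the reduced parabolic generated by $P_{\text{red}}$ and $P^{\beta_i}$, hence equal to $P^{\beta_i}$; thus each $Q^i$ has maximal reduced part and the final assertion will follow once the displayed equality is proved. To prove $\bigcap_i Q^i \subseteq P$, I would use that every group in sight contains $B \supseteq T$ and is therefore determined, through the open big cell, by its intersection with the unipotent radical $U^-$ of the opposite Borel, a $T$-stable closed subgroup scheme of $U^-$. Setting $V = P \cap U^-$ and $W_i = P^{\beta_i} \cap U^- = \prod_{c_i(\alpha) = 0} U_{-\alpha}$ (the centraliser in $U^-$ of the coweight dual to $\beta_i$, where $c_i(\alpha)$ denotes the $\beta_i$-coefficient of $\alpha$), passing to $U^-$-parts turns the intersection and generation of such parabolics into the meet and join in the lattice $\mathcal L$ of those $T$-stable subgroup schemes of $U^-$ that arise as $R \cap U^-$ for a parabolic $R \supseteq B$. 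The statement then becomes the lattice identity $\bigcap_i (V \vee W_i) = V$ in $\mathcal L$, the inclusion $\supseteq$ being clear.

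The plan is to treat first the case---automatic when $p \geq 5$ or the diagram is simply laced, by \cite{Wenzel, HL93}---in which $V$ is directly spanned by its root subgroups, so that $U_{-\alpha} \cap V$ is a Frobenius kernel of some height $h(\alpha) \in \{0,1,\dots,\infty\}$. The requirement that $P \supseteq B$ be a group then binds $h$ twice: closure of $V$ under the commutators of $U^-$ forces $h(\alpha+\gamma) \geq \min(h(\alpha),h(\gamma))$, while stability under big-cell conjugation by the positive root subgroups of $B$ forces $h(\alpha-\gamma) \geq h(\alpha)$ whenever $\gamma, \alpha-\gamma \in \Phi^+$. I would show that these force each super-level set $\{\,\alpha : h(\alpha) \geq t\,\}$ to be at once closed under root addition and downward closed in the root order; inspecting its minimal complementary roots, which must then be simple, identifies it with the positive roots supported away from a fixed set of simple roots, whence $h(\alpha) = \min_{\beta \in \Supp(\alpha)} h(\beta)$. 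Granting this, $Q^i = G^{m_i} P^{\beta_i}$ with $m_i = h(\beta_i)$, and taking the pointwise minimum over $i$ returns $h$ exactly, yielding $\bigcap_i (V \vee W_i) = V$.

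The main obstacle is characteristics two and three, where several Chevalley structure constants vanish over $k$: both constraints above weaken, so that additional height functions and, for a short list of types, subgroups $V$ \emph{not} spanned by their root subgroups become admissible---these are precisely the exotic parabolics, governed by maximal $p$-Lie subalgebras, isolated in \cite{Maccan}. To handle them I would work inside the distribution algebra $\Dist(G)$, in which intersection and generation of subgroup schemes are intersection and generation of subalgebras and the $B$-action is recorded by the coproduct, and invoke the classification of parabolics with maximal reduced part from \cite{Maccan} to pin down each $Q^i$ explicitly. What remains is to verify that the infinitesimal data carried by $P$ in the $\beta_i$-direction is faithfully recorded by $Q^i$ and that these local pieces glue back to $V$ upon intersecting. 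As the exotic behaviour is confined to finitely many types---most notably $G_2$ in characteristic two---this final gluing reduces to a type-by-type check resting on the explicit subgroups of \cite{Maccan}, and it is here, rather than in the combinatorial core, that I expect the genuine difficulty to lie.
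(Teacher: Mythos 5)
Your framework is the right one and matches the paper's: reduce to the numerical data $\gamma\mapsto \height(P\cap U_{-\gamma})$ via Wenzel's structure results, note the easy inclusion $P\subseteq\bigcap_i Q^i$ and that each $Q^i$ has reduced part $P^{\beta_i}$, and dispose of the case $p\geq 5$ or simply laced by Wenzel's $\min$-over-support formula. But for characteristics two and three --- which is the entire content of the theorem --- your proposal both misdiagnoses the difficulty and defers the work. The misdiagnosis: you suggest that in small characteristics $V=P\cap U^-$ may fail to be directly spanned by its root subgroups, requiring a passage to the distribution algebra. This does not happen: the decomposition $U_P^-=\prod_\gamma(U_P^-\cap U_{-\gamma})$ and the determination of $P$ by the subgroups $P\cap U_{-\gamma}$ (Wenzel, Propositions 4 and 8, Theorem 10) hold in every characteristic, and each $P\cap U_{-\gamma}$ is a Frobenius kernel of $U_{-\gamma}\cong\Ga$. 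The exotic parabolics of type $G_2$, and the quasi-standard ones under the edge hypothesis, differ from standard ones only in their numerical functions, namely in which height functions are admissible once certain structure constants vanish. So the distribution-algebra detour addresses a non-problem, while the actual problem --- proving $\varphi(\gamma)=\min_i\varphi_i(\gamma)$ for every positive root $\gamma$ --- is exactly the ``type-by-type check'' you leave undone.

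Two specific ingredients are missing and are not routine. First, even to identify $Q^i=\langle P,P^{\beta_i}\rangle$ you need the equality $U_{-\beta_i}\cap P=U_{-\beta_i}\cap Q^i$; the inclusion $\subseteq$ is clear, but the reverse (that generating with $P^{\beta_i}$ does not fatten the $\beta_i$-direction) has no evident lattice-theoretic proof. The paper establishes it geometrically, by exhibiting $G/P\to G/Q^{\beta_i}$ as the Stein factorisation of the morphism given by $\mathcal{O}(D_{\beta_i})$ and computing $D_{\beta_i}\cdot C_{\beta_i}=1$ to see that the Schubert curve maps isomorphically. Second, propagating this equality from simple roots to arbitrary roots is where the case analysis lives: one needs $\varphi(\gamma+\delta)\geq\min\{\varphi(\gamma),\varphi(\delta)\}$ only when $\mathcal{N}(\gamma,\delta)=\pm 1$, and when the relevant constants vanish the paper instead derives a contradiction from the \emph{minimality} of the isogeny $\xi_i$ with $Q^i=(\ker\xi_i)P^{\beta_i}$, using the very special isogeny and duality for $B_n/C_n$, the subgroup of type $D_4$ generated by long roots for $F_4$, and explicit commutator formulas for $G_2$. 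None of this is supplied or replaced by an alternative argument, so the proposal is a plausible outline of the easy half plus an acknowledgement that the hard half remains; it is not a proof.
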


Let us mention that, even if $P$ is of the form (\ref{stand}), some parabolic subgroups not of standard type can appear in its expression as intersection of the $Q^i$s; this is illustrated in \Cref{Qforstandard}.\\\\
This text is organised as follows. 
In Section $1$, we collect groundwork material, which can be found in more detail in \cite{Maccan}; starting with some notation and results of \cite{Wenzel}, we then make the reduction to treating the case of a simple, simply connected group. Next, we move on to the fundamental notion of \emph{very special isogeny}, introduced and studied by \cite{BorelTits} and \cite{CGP15}, which allows us to state a result on the factorisation of isogenies.
Moreover, we recall the notion of Schubert divisors and Schubert curves on $G/P$, as well as the construction of a family of contraction morphisms, indexed by the set of simple roots $\beta_i$ which do not belong to the Levi subgroup of $P_\text{red}$. These are defined geometrically, and have as targets the varieties
\[
G/Q^i
\]
whose stabilisers are exactly the subgroups appearing in the statement of \Cref{main_intro}.\\
We conclude with the following key Lemma.
\begin{lemma*}
 Let $P$ be a parabolic subgroup of $G$ as above and consider some simple root $\beta_i$. Then 
 \[U_{-\beta_i} \cap P = U_{-\beta_i} \cap Q^i.\]
\end{lemma*}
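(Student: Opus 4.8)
The inclusion $U_{-\beta_i}\cap P\subseteq U_{-\beta_i}\cap Q^i$ is immediate from $P\subseteq Q^i$, so the entire content is the reverse inclusion $U_{-\beta_i}\cap Q^i\subseteq P$. The plan is first to make both sides transparent as subgroup schemes of $U_{-\beta_i}\cong\Ga$. The fundamental coweight $\lambda_{\beta_i}$ lies in $T$, hence in $P\cap Q^i$, so conjugation by $\lambda_{\beta_i}(\Gm)$ stabilises $P$, $Q^i$ and $U_{-\beta_i}$, acting on the last through the nontrivial character $t\mapsto t^{-1}$. Consequently $U_{-\beta_i}\cap P$ and $U_{-\beta_i}\cap Q^i$ are $\Gm$-stable closed subgroup schemes of $\Ga$; each is therefore trivial, a Frobenius kernel $\ker F^m$, or all of $\Ga$, and these possibilities form a chain. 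Since $\beta_i$ is removed from the Levi of $P_{\text{red}}$, and since $Q^i_{\text{red}}=P^{\beta_i}$ likewise omits $-\beta_i$, neither intersection can be all of $U_{-\beta_i}$ (a reduced $U_{-\beta_i}$ would land in the reduced part). Thus $U_{-\beta_i}\cap P=\ker F^{m_i}$ and $U_{-\beta_i}\cap Q^i=\ker F^{n_i}$ with $m_i\le n_i$, and it remains to prove $n_i\le m_i$.

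The second step isolates the $-\beta_i$ coordinate inside the opposite unipotent radical $U^-$. Since a sum of two positive roots is never simple, the set of positive roots $\gamma\neq\beta_i$ is closed under addition; the corresponding root subgroups generate a normal subgroup $U_{\neq\beta_i}\trianglelefteq U^-$ with $U^-/U_{\neq\beta_i}\cong U_{-\beta_i}$, and I write $\rho$ for this projection. Because $P^{\beta_i}$ is generated by $B$ together with the $U_{-\beta}$ for $\beta\neq\beta_i$, and all of these except the $U_{-\beta_j}$ with $j\neq i$ already lie in $P$, one has $Q^i=\langle P,\,U_{-\beta_j}:j\neq i\rangle$. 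Each extra generator $U_{-\beta_j}$ lies in $\ker\rho$, so on the big cell $\rho(Q^i\cap U^-)=\rho(P\cap U^-)$, whence $U_{-\beta_i}\cap Q^i\subseteq\rho(P\cap U^-)$. If $P\cap U^-$ were directly spanned compatibly with $\rho$, that is $\rho(P\cap U^-)=U_{-\beta_i}\cap P$, the proof would be complete.

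The genuine obstacle is precisely this last factorisation, which in small characteristic can fail because $P\cap U^-$ need not be a product of root subgroup schemes. I would therefore localise the computation of $n_i$ to rank two: a simple root $\beta_j$ not adjacent to $\beta_i$ commutes with $U_{\pm\beta_i}$ and cannot alter the $\beta_i$-height, while for $\beta_j$ adjacent to $\beta_i$ I pass to the rank-two subgroup $\langle U_{\pm\beta_i},U_{\pm\beta_j},T\rangle$, of type $A_2$, $B_2$/$C_2$ or $G_2$. In the simply laced case, and whenever the relevant Chevalley structure constants survive in $k$, the classical argument of \cite{Wenzel} forces $n_i=m_i$. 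The hard part, as one expects from the theme of this work, is $B_2$/$C_2$ in characteristic two and $G_2$ in characteristics two and three: there I would invoke the factorisation of isogenies through the very special isogeny to control the admissible $\beta_i$-heights of $Q^i$ and thereby conclude $n_i\le m_i$.
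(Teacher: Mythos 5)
Your first step (both intersections are $\Gm$-stable infinitesimal subgroup schemes of $U_{-\beta_i}\cong\Ga$, hence Frobenius kernels, so only the inequality $n_i\le m_i$ is at stake) is fine, but the core of your argument has a genuine gap, and moreover you have located the difficulty in the wrong place. The unjustified step is the claim that $\rho(Q^i\cap U^-)=\rho(P\cap U^-)$. The retraction $\rho$ is only defined on $U^-$, whereas $Q^i=\langle P, U_{-\beta_j}:j\neq i\rangle$ is generated inside $G$: an element of $Q^i\cap U^-$ is a word in generators of $P$ (including all of $B$) and the $U_{-\beta_j}$ whose individual letters need not lie in $U^-$, so $Q^i\cap U^-$ is \emph{not} generated by $P\cap U^-$ and the $U_{-\beta_j}$. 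Concretely, for $\gamma$ a positive root with $\gamma+\beta_i\in\Phi$, the commutator of $U_{\gamma}\subseteq B\subseteq P$ with $Q^i\cap U_{-\gamma-\beta_i}$ can contribute to the $U_{-\beta_i}$-component, and $Q^i\cap U_{-\gamma-\beta_i}$ is in general strictly larger than $P\cap U_{-\gamma-\beta_i}$; ruling out such feedback is essentially the entire content of the lemma. By contrast, the step you flag as the obstacle, namely $\rho(P\cap U^-)=U_{-\beta_i}\cap P$, is actually unproblematic: by Wenzel's direct spanning (\ref{product}), $U_P^-$ is the product of its intersections with the root subgroups, so the image under $\rho$ is exactly $P\cap U_{-\beta_i}$ even in small characteristic. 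Finally, your closing paragraph is a plan rather than a proof: the rank-two localisation does not obviously control $Q^i\cap U_{-\beta_i}$, since the generation of $Q^i$ from $P$ and $P^{\beta_i}$ involves commutators across the whole root system, and "invoke the factorisation of isogenies to control the admissible heights" is precisely the hard case left unargued.

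For comparison, the paper avoids group generation entirely and argues geometrically: the contraction $f_{\beta_i}\colon G/P\to G/Q^i$ is the Stein factorisation of the morphism $g_{\beta_i}$ to $\proj(H^0(X,\mathcal{O}_X(D_{\beta_i}))^\vee)$, and the intersection numbers $g_{\beta_i}^\ast H_{\beta_i}=D_{\beta_i}$ and $D_{\beta_i}\cdot C_{\beta_i}=1$ force $f_{\beta_i}$ to restrict to an isomorphism from the Schubert curve $C_{\beta_i}$ onto its image; restricting to the open cells identifies $U_{-\beta_i}/(U_{-\beta_i}\cap P)$ with $U_{-\beta_i}/(U_{-\beta_i}\cap Q^i)$ and hence the two kernels. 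If you want a purely group-theoretic proof along your lines, you would need an independent argument that no commutator feedback into $U_{-\beta_i}$ occurs, which is not supplied here.
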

Section $2$ is dedicated to the proof of \Cref{main_intro}. First, we make a few remarks on the associated numerical functions of $P$ and of the $Q^i$s; following \cite{Wenzel}, we define such a function as
\[
\varphi \colon \Phi \longrightarrow \N \cup \{\infty \}, \quad \gamma \longmapsto \height(P\cap U_{-\gamma}),
\]
where the \emph{height} of a subgroup is the smallest non-negative integer $m$ such that the $m$-th iterated Frobenius morphism kills it. Then, we proceed by a case-by-case analysis according to the Dynkin diagram of $G$ and the characteristic of the base field, because the proof heavily depends on the structure constants of $\Lie G$.\\\\
We gather in Section $3$ a few geometric consequences of \Cref{main_intro}; first, we compute the subgroups $Q^i$ for a parabolic subgroup of standard type, then we study whether there exists a smooth contraction morphism on a homogeneous projective variety: it turns out that this is always the case, except for the exotic cases in type $G_2$.\\\\
In characteristic zero, every $G/P$ is Fano, and there are only a finite number with fixed dimension. Both of these statements prove false in positive characteristics; nevertheless, as the final result presented in this text, we prove the following finiteness property:
\begin{theorem*}
    Let $n \geq 1$ be a fixed integer.\\
    There are a finite number of isomorphism classes of projective homogeneous varieties of dimension $n$ which are Fano.
\end{theorem*}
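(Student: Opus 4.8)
The plan is to realise each variety as $G/P$ with $G$ semisimple, reduce to a single simple factor of bounded rank, translate the Fano condition into finitely many inequalities on the numerical function of $P$, and finally absorb the remaining infinitude into Frobenius twists. First I would reduce to the simple case. A parabolic subgroup of a product $G = \prod_j G_j$ of simple factors is a product $P = \prod_j P_j$, because the conditions defining an admissible numerical function do not couple distinct factors (the relevant structure constants between roots of different factors vanish); see \cite{Maccan}. Hence $X = \prod_j G_j/P_j$, its anticanonical bundle is the external tensor product of the $-K_{G_j/P_j}$, so $X$ is Fano if and only if each factor is, and $\dim X = \sum_j \dim G_j/P_j$. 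Since the unipotent radical of a proper parabolic has dimension at least $\rank G_j$, a variety of dimension $n$ has at most $n$ simple factors, each of rank at most $n$; over the fixed field $k$ this leaves finitely many possibilities for $G$, and for each of them finitely many reduced parts $P_{\text{red}} = \bigcap_{i} P^{\beta_i}$. It therefore suffices to bound, for fixed $G$ and fixed $P_{\text{red}}$, the number of isomorphism classes of Fano $G/P$.

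Fixing $G$ and $P_{\text{red}}$, I would parametrise $P$ by its numerical function $\varphi$, finite on the set $S$ of positive roots $\gamma$ with $U_{-\gamma}\not\subset P_{\text{red}}$ and equal to $\infty$ elsewhere. By \Cref{main_intro} the variety $G/P$ carries the contraction morphisms $\pi_i\colon G/P\to G/Q^i$, and its cone of curves is generated by the finitely many Schubert curves $C_i$ contracted by the $\pi_i$; by Kleiman's criterion $G/P$ is Fano if and only if $-K_{G/P}\cdot C_i > 0$ for every $i$. The central computation is to express these numbers through $\varphi$. For this I would use the purely inseparable cover $q\colon G/P_{\text{red}}\to G/P$ and the induced comparison of canonical bundles: each height $\varphi(\gamma)$ should enter the class of $-K_{G/P}$ via a factor $p^{\varphi(\gamma)}$ weighting the contribution of $\gamma$, exactly as in the rank-one model $\SL_2/G^mB\cong\proj^1$, where $q$ is the $p^m$-power Frobenius. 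This yields each $-K_{G/P}\cdot C_i$ as an explicit integral combination of the $p^{\varphi(\gamma)}$ with coefficients built from the Cartan integers $\langle\gamma,\beta_i^\vee\rangle$ and the structure constants of $\Lie G$.

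Because these intersection numbers are sums of positive and negative multiples of the $p^{\varphi(\gamma)}$, the inequalities $-K_{G/P}\cdot C_i>0$ constrain only the differences $\varphi(\gamma)-\varphi(\gamma')$, not the heights themselves; concretely they force $\max_S\varphi-\min_S\varphi\le N(G,P_{\text{red}})$ for an explicit constant. This is consistent with the fact that already the infinitely many $\SL_2/G^mB$ are all Fano. To turn infinitely many Fano parabolics into finitely many varieties, note that raising every height by a common integer $c$ yields a parabolic $P_{+c}\supset P$ for which $G/P\to G/P_{+c}$ is the $p^c$-power relative Frobenius; since $k$ is perfect, $G/P_{+c}\cong (G/P)^{(p^c)}\cong G/P$. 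Normalising $\min_S\varphi=0$, one may then assume $0\le\varphi\le N(G,P_{\text{red}})$ on $S$, leaving finitely many admissible $\varphi$, hence finitely many $P$ and finitely many isomorphism classes of $X$.

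The main obstacle is the computation in the second paragraph: controlling $K_{G/P}$ when $P$ is non-reduced, where the identification $T_{G/P}=G\times^P(\mathfrak g/\mathfrak p)$ fails and one must instead track the ramification of $q$ direction by direction, case by case according to the Dynkin type and the characteristic. A secondary delicate point is to verify that a uniform shift of heights is genuinely realised by a power of Frobenius, so that it preserves the isomorphism class over $k$, and that such a shift preserves the admissibility of $\varphi$.
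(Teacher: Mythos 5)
Your strategy is essentially the paper's proof of \Cref{notFano}: bound $G$ by the dimension, express the anticanonical class through $\varphi$ with weights $p^{\varphi(\gamma)}$, show that the Fano condition bounds the spread of $\varphi$, and absorb a uniform shift of $\varphi$ into a Frobenius twist (the paper shifts in the opposite direction via \Cref{nokernel}, quotienting out the largest kernel of a non-central isogeny contained in $P$). Two of the points you flag as obstacles are in fact not ones: the anticanonical formula you want is exactly $\chi=\sum_{\gamma\in\Phi^+\backslash\Phi_I}p^{\varphi(\gamma)}\gamma$, valid in every characteristic and quoted from \cite{Lauritzen} as \Cref{chi}, so no case-by-case ramification analysis is needed; and testing ampleness by $(\chi,\beta_l)>0$ for $\beta_l\in\Delta\backslash I$ (\Cref{ample}) is the same as your intersection with Schubert curves.

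The genuine gap is the assertion that the inequalities ``force $\max_S\varphi-\min_S\varphi\le N(G,P_{\text{red}})$''. Invariance under a uniform shift is correct but does not bound the differences; you must exhibit, whenever the spread is large, a simple root $\beta_l$ with $\varphi(\beta_l)$ small together with a root $\delta$ with $\varphi(\delta)$ large and $(\delta,\beta_l)<0$, so that the single term $-p^{\varphi(\delta)}\vert(\delta,\beta_l)\vert$ dominates. This needs two inputs absent from your sketch. First, the classification (\Cref{mainnnn}) and the total ordering of the kernels $\ker\xi_i$ (\Cref{minimale}) give $\varphi(\gamma)=\min\{\varphi_i(\gamma):\beta_i\in\Supp(\gamma)\}$, which is what identifies the roots of large height as exactly those whose support avoids the small $\beta_j$, and what bounds the positive contributions to $(\chi,\beta_l)$ by $p^{\varphi(\beta_l)+1}$ times a constant. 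Second, a combinatorial fact about connected Dynkin diagrams (\Cref{delta}) is needed to produce a positive root $\delta$ supported away from the small $\beta_j$ yet pairing negatively with one of them; without it, nothing a priori rules out that every root of large height pairs non-negatively with every simple root of small height. Finally, the exotic $G_2$ parabolics in characteristic $2$ are not of quasi-standard type, so they escape the ordered-kernel formalism entirely; the paper handles the two infinite families $P_{\mathfrak{h}}\cap G^mP^{\alpha_2}$ and $P_{\mathfrak{l}}\cap G^mP^{\alpha_2}$ by a separate explicit computation of $(\chi,\alpha_1)$, and your argument would need the analogous check.
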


\textbf{Acknowledgments.} I would like to express my gratitude to my PhD advisors, Michel Brion and Matthieu Romagny, for all the time they are investing in guiding my work and for their constant support.


\section{Preliminaries}

\subsection{Notation and conventions} 
In this text, $k$ is an algebraically closed field of prime characteristic $p >0$. Unless otherwise stated, every object is going to be defined over the base field $k$. When $V$ is a finite-dimensional $k$-vector space, we denote $\proj(V)$ to be the projective space of lines in $V$.\\\\
Let $G \supset B \supset T$ be respectively a semisimple, simply connected algebraic group, a Borel subgroup and a maximal torus contained in it. We are interested in rational projective homogeneous $G$-varieties, which one can realise as quotients of the form $G/P$ where $P$ is a parabolic subgroup of $G$, \emph{not necessarily reduced}: by subgroup we always mean a closed subgroup scheme; by parabolic subgroup we mean one containing a Borel subgroup. Since Borel subgroups are all conjugate, we can restrict to subgroups containing $B$.\\ 
Let us call $\Phi$ the root system of $G$ relative to $T$, $\Phi^+$ the subset of positive roots associated to the Borel subgroup $B$ and $\Delta$ the corresponding basis of simple roots. For any root $\gamma$, let $\Supp(\gamma)$ be its support, defined as the set of simple roots which have a nonzero coefficient in the expression of $\gamma$ (as linear combination of simple roots with integer coefficients of the same sign). Moreover, we denote as $U_\gamma$ the root subgroup associated to $\gamma$, as $u_\gamma \colon \Ga \rightarrow U_\gamma$ its root homomorphism and as $\mathfrak{g}_\gamma$ the corresponding one-dimensional Lie subalgebra of $\Lie G$. Let us fix a Chevalley basis of $\Lie G$:
\[
\{ X_\gamma, H_\alpha \}_{\gamma \in \Phi, \alpha\in\Delta}.\]
    In particular, 
    \[
    \mathfrak{g}_\gamma = \Lie U_\gamma = k X_\gamma \quad \text{and} \quad X_\gamma = \dd u_\gamma (1).
    \]
We denote as $P_I$ the reduced parabolic subgroup having $I \subset \Delta$ as set of simple roots of a Levi subgroup. 
    Moving on to less standard notation, $G^m$ denotes the kernel of the $m$-th iterated Frobenius morphism 
\[
F^m_G \colon G \rightarrow G^{(m)},
\]
while the maximal reduced parabolic subgroup not containing $U_{-\alpha}$, for $\alpha$ a simple root, is
\[
P^{\alpha} \defeq P_{\Delta \backslash \{ \alpha \} }.
\]
For a non-reduced parabolic subgroup $P$, let 
\begin{align}
\label{infinitesimal}
U_P^- \defeq P \cap R_u^-(P_{\text{red}})
\end{align}
be its intersection with the unipotent radical of the opposite reduced parabolic of $P_{\text{red}}$. The subgroup $U_P^-$ is unipotent, infinitesimal and satisfies
\begin{align}
\label{product}
U_P^- = \prod_{\gamma \in \Phi^+ \backslash \Phi_I} (U_P^- \cap U_{-\gamma}) \quad \text{and} \quad P = U_P^- \times P_{\text{red}},
\end{align}
where both identities are isomorphisms of schemes given by the multiplication of $G$: this is proven in \cite[Proposition 4 and Theorem 10]{Wenzel}. 
Concerning root systems, we follow conventions from \cite{Bourbaki}. For two subgroups $H$ and $K$ of $G$, we denote as 
\[
\langle H,K\rangle
\]
the smallest subgroup of $G$ containing both of them.\\
The aim of this text is to prove the following classification result.

\begin{theorem}
\label{mainnnn}
    Let $P$ be a parabolic subgroup of a semisimple group $G$ over an algebraically closed field of any characteristic, with reduced part $P_I$. Then the inclusion
    \begin{align}
    \label{inclusion0}
    P \subseteq \bigcap_{\alpha \in \Delta \backslash I} \langle P,P^\alpha \rangle
    \end{align}
    is an equality; in particular, $P$ is intersection of parabolic subgroups with maximal reduced part.
\end{theorem}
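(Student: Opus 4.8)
The plan is to reduce the global equality \eqref{inclusion0} to a collection of local, root-by-root statements and then invoke the structural decomposition \eqref{product}. Since the inclusion $P \subseteq \langle P, P^\alpha\rangle$ is automatic for every $\alpha$, only the reverse inclusion $\bigcap_\alpha \langle P,P^\alpha\rangle \subseteq P$ needs proof. Both sides are parabolic subgroups containing $B$ with the \emph{same} reduced part $P_I$ (each $Q^\alpha \defeq \langle P,P^\alpha\rangle$ is parabolic, and one checks that its Levi set of simple roots is exactly $I \cup \{\alpha\}$, so that $\bigcap_\alpha Q^\alpha$ has reduced part $\bigcap_\alpha P^{I\cup\{\alpha\}}=P_I$). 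By \eqref{product}, a parabolic $P$ with reduced part $P_I$ is completely determined by the infinitesimal piece $U_P^- = \prod_{\gamma\in\Phi^+\setminus\Phi_I}(P\cap U_{-\gamma})$, and hence by the numerical function $\varphi(\gamma)=\height(P\cap U_{-\gamma})$ on the negative roots $-\gamma$. So the equality of the two parabolics is equivalent to the equality of their associated numerical functions.

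First I would apply this reformulation to the right-hand side: writing $\psi$ for the numerical function of $\bigcap_\alpha Q^\alpha$, we have $\psi(\gamma)=\min_\alpha \height(Q^\alpha\cap U_{-\gamma})$, because intersecting parabolics with common reduced part corresponds to taking the pointwise minimum of heights (the heights of $P\cap U_{-\gamma}$ behave additively/minimally under the scheme-theoretic operations, as recorded in \cite{Wenzel}). Since $P\subseteq Q^\alpha$ gives $\varphi(\gamma)\le \psi(\gamma)$ automatically, the whole theorem collapses to proving the reverse pointwise inequality $\varphi(\gamma)\ge\psi(\gamma)$, i.e. that for each negative root $-\gamma$ there exists at least one simple root $\alpha\in\Delta\setminus I$ with $\height(Q^\alpha\cap U_{-\gamma})\le \height(P\cap U_{-\gamma})$; equivalently, $P\cap U_{-\gamma}=Q^\alpha\cap U_{-\gamma}$ for a suitable $\alpha$.

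This is precisely where the key Lemma enters: it asserts $U_{-\beta_i}\cap P = U_{-\beta_i}\cap Q^i$ for every simple root $\beta_i=\alpha\in\Delta\setminus I$, i.e. the desired coincidence holds \emph{on the simple negative roots}. The real work is to propagate this from the simple roots to an arbitrary negative root $-\gamma$ with $\Supp(\gamma)\cap(\Delta\setminus I)\neq\emptyset$. The strategy is to choose $\alpha\in\Supp(\gamma)\cap(\Delta\setminus I)$ and show that the height of $P$ along $-\gamma$ is forced by its height along $-\alpha$ together with the commutation relations of the root subgroups inside $G$. Concretely, I would use the Chevalley commutator formula: bracketing $U_{-\alpha}$ with the root subgroups indexed by roots in $\Phi_I^+$ (which lie in $P_{\text{red}}\subseteq P$) generates $U_{-\gamma}$, and since $P$ is a subgroup stable under conjugation by $P_{\text{red}}$, the height of $P\cap U_{-\gamma}$ is controlled by that of $P\cap U_{-\alpha}$. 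The same computation performed inside $Q^\alpha$ yields the matching control, and because $P\cap U_{-\alpha}=Q^\alpha\cap U_{-\alpha}$ by the Lemma, the two heights along $-\gamma$ agree.

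The main obstacle is exactly this propagation step, because the relevant Chevalley structure constants can vanish modulo $p$ in characteristics two and three — which is the entire reason the simply-laced/$p\ge 5$ argument of \cite{Wenzel} does not apply directly. When a structure constant vanishes, the commutator relation degenerates and fails to transmit the height constraint from $-\alpha$ to $-\gamma$, so a naive uniform argument breaks down. I therefore expect the proof to descend into a case-by-case analysis organised by the Dynkin type of $G$ and the characteristic $p\in\{2,3\}$, tracking for each multiply-laced configuration which commutators survive and supplying ad hoc replacements (for instance higher commutators, or the explicit $p$-Lie-algebra computations from \cite{Maccan}) wherever the first-order bracket degenerates; the exotic $G_2$ behaviour in characteristic two is where I anticipate the most delicate bookkeeping.
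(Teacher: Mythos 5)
Your global framework coincides with the paper's: reduce to the equality of Wenzel's numerical functions, observe that the function of $\bigcap_{\alpha}\langle P,P^\alpha\rangle$ is the pointwise minimum of the functions $\varphi_\alpha$ of the $Q^\alpha=\langle P,P^\alpha\rangle$, note that \Cref{lem:m_alpha} settles the simple roots, and propagate to the remaining roots by a case analysis on Dynkin type and characteristic. (A small slip: the reduced part of $Q^\alpha$ is $P^\alpha=P_{\Delta\backslash\{\alpha\}}$, not a parabolic with Levi set $I\cup\{\alpha\}$; your conclusion that the intersection has reduced part $P_I$ is nevertheless correct.)

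The gap is in the propagation step, and it is not merely a matter of unperformed bookkeeping. First, the mechanism you describe --- generating $U_{-\gamma}$ from $U_{-\alpha}$ by bracketing with root subgroups indexed by $\Phi_I^+$, i.e.\ by conjugating inside $P_{\text{red}}$ --- only reaches roots $\gamma$ whose support meets $\Delta\backslash I$ in the single root $\alpha$; it cannot reach a root such as $\alpha_1+\alpha_2$ when $P_{\text{red}}=B$. The paper's actual tool (\Cref{enne}) brackets two root subgroups both meeting $P$ infinitesimally and yields only the one-sided bound $\varphi(\gamma+\delta)\geq\min\{\varphi(\gamma),\varphi(\delta)\}$. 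Second, and more importantly, in the multiply laced cases this bound is systematically short by exactly one of the target $\min_i\varphi_i(\gamma)$, because when $Q^\alpha=(\ker\xi_\alpha)P^\alpha$ with $\xi_\alpha$ involving a very special isogeny, the function $\varphi_\alpha$ takes two values differing by one on the two Weyl orbits (\Cref{rosa_nero_verde}(c)). The paper does not close this gap with ``higher commutators'': it argues by contradiction from the \emph{minimality} of $Q^\alpha$ (\Cref{minimality}) --- if $\varphi$ took the smaller value on the relevant roots, then $P$ would be contained in a strictly smaller $(\ker\zeta)P^\alpha$, forcing $\langle P,P^\alpha\rangle\subseteq(\ker\zeta)P^\alpha$ and contradicting the identification of $Q^\alpha$ --- supplemented by pulling back along the very special isogeny to exchange long and short roots (this is how types $B_n$ and $C_n$ are played against each other and how the root $\alpha_1+2\alpha_2$ is handled in $B_2$), and, in type $F_4$, by the subgroup of type $D_4$ generated by the long root subgroups. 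None of these ingredients appears in your sketch, so as written the proposal does not yet contain a proof of the hard inequality.
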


\begin{lemma}
\label{Gsimple}
    Any parabolic subgroup $P$ of a semisimple simply connected group 
\[
G = G_1 \times \cdots \times G_n
\]
is a product of the parabolic subgroups $P_i \defeq P\cap G_i$ of the simple factors $G_i$.
\end{lemma}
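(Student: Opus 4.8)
The plan is to exploit the compatibility of the root datum of $G$ with the product decomposition $G = G_1 \times \cdots \times G_n$, together with the structure theorem (\ref{product}) for non-reduced parabolics. I would fix $B = \prod_i B_i$ and $T = \prod_i T_i$ compatibly with the factorisation, so that the root system splits as a disjoint union $\Phi = \bigsqcup_i \Phi_i$, the simple roots as $\Delta = \bigsqcup_i \Delta_i$, and every root subgroup $U_{\pm\gamma}$ with $\gamma \in \Phi_i$ lies in $G_i$. Since the $G_i$ commute pairwise and meet trivially, the multiplication map $\prod_i G_i \to G$ is an isomorphism, so it suffices to show that $\prod_i P_i \to P$ is an isomorphism; equivalently, on $R$-points, that $P(R) = \prod_i P_i(R)$ for every $k$-algebra $R$.

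First I would dispatch the easy inclusion $\prod_i P_i \subseteq P$: each $P_i = P \cap G_i$ sits inside the subgroup $P$, hence so does their product. I would also record at this point that $P_i$ is parabolic in $G_i$, since from $B = \prod_i B_i \subseteq P$ one gets $B_i \subseteq P \cap G_i = P_i$, and a subgroup containing a Borel subgroup is parabolic.

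For the reverse inclusion I would invoke (\ref{product}), writing $P = U_P^- \times P_{\text{red}}$ with $U_P^- = \prod_{\gamma \in \Phi^+ \backslash \Phi_I}(U_P^- \cap U_{-\gamma})$, where $P_{\text{red}} = P_I$ and $I = \bigsqcup_i I_i$ with $I_i = I \cap \Delta_i$. Both factors split along the $G_i$: for the reductive part, the classical description $P_I = L_I \cdot R_u(P_I)$, with $L_I$ generated by $T$ and the $U_{\pm\gamma}$ for $\gamma \in \Phi_I = \bigsqcup_i \Phi_{I_i}$, gives $P_{\text{red}} = \prod_i (P_{\text{red}} \cap G_i)$; for the infinitesimal part, each factor $U_P^- \cap U_{-\gamma}$ with $\gamma \in \Phi_i$ lies in $G_i$, so grouping the product by the index $i$ exhibits $U_P^-$ as a product of subgroups $V_i \defeq \prod_{\gamma \in \Phi_i^+ \backslash \Phi_{I_i}}(U_P^- \cap U_{-\gamma}) \subseteq G_i$, which commute for distinct $i$. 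Rearranging the commuting factors yields $P = \prod_i \bigl[ V_i \times (P_{\text{red}} \cap G_i) \bigr]$.

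It remains to identify the $i$-th bracket with $P_i$, and this is where the (mild) main obstacle lies: I must pass from the global decomposition to one internal to $G_i$. Given a point $g \in P_i(R) \subseteq G_i(R)$, I would write $g = u\, s$ uniquely with $u \in U_P^-(R)$ and $s \in P_{\text{red}}(R)$, then decompose $u = \prod_j u_j$ with $u_j \in V_j(R)$ and $s = \prod_j s_j$ with $s_j \in (P_{\text{red}} \cap G_j)(R)$, and regroup as $g = \prod_j (u_j s_j)$ with $u_j s_j \in G_j(R)$. Uniqueness of the decomposition $G = \prod_j G_j$ forces $u_j s_j = e$ for $j \neq i$, so $g = u_i s_i$; this simultaneously shows $V_i = U_P^- \cap G_i$ and that $P_i = (U_P^- \cap G_i) \times (P_{\text{red}} \cap G_i)$. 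Combined with the previous paragraph, this gives $P = \prod_i P_i$. The argument is essentially bookkeeping; the only point to watch is that every splitting invoked — of $\Phi$, of $I$, of $U_P^-$ and $P_{\text{red}}$, and of an individual point of $P_i$ — is genuinely induced by the single decomposition $G = \prod_i G_i$, which its uniqueness guarantees.
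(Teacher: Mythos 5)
Your proposal is correct and follows essentially the same route as the paper: both decompose $P = U_P^- \times P_{\text{red}}$ via (\ref{product}), split $U_P^-$ along the root subgroups (which lie in individual factors $G_i$) and $P_{\text{red}}$ along the classical product decomposition of reduced parabolics, then regroup. Your extra bookkeeping identifying $V_i \times (P_{\text{red}} \cap G_i)$ with $P_i$ just makes explicit a step the paper leaves implicit in the notation $U_{P_i}^-$.
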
 

In particular, from now on we can (and will) assume the group $G$ to be simple.

\begin{proof}
Indeed, by the second isomorphism in (\ref{product}), we can see $P$ as a product of its unipotent infinitesimal part and its reduced part. Moreover, by the first isomorphism in (\ref{product}), we have that 
\[
U_P^- = U_{P_1}^- \times \cdots \times U_{P_n}^-,
\]
while it is a classical fact that
\[
P_{\text{red}} = (P_1)_{\text{red}}  \times \cdots \times (P_n)_{\text{red}}.
\]
Putting these equalities together allows to conclude that $P$ is the product of the $P_i$s.
\end{proof}

\begin{definition}
\label{standardtype}
    A parabolic subgroup is said to be \emph{of standard type} if it is of the form
    \[
    P = \bigcap_{\alpha \in \Delta \backslash I} G^{m_\alpha} P^\alpha
    \]
    for some non-negative integers $m_\alpha$.
\end{definition}

The (partial) classification, which has been established in \cite{Wenzel} and \cite{HL93}, can be reformulated as follows: when the characteristic is at least equal to $5$ or when the group $G$ is simply laced, all parabolic subgroups are of standard type. Let us notice that, for a parabolic subgroup of standard type, the integers $m_\alpha$ are uniquely determined as being the height of the intersection 
\[
U_{-\alpha} \cap P.
\]

\subsection{Isogenies}
We introduce here the notions which allow us to slightly generalise \Cref{standardtype} in order to extend the classification to small characteristics.

\begin{definition}
If the Dynkin diagram of $G$ has an edge of multiplicity $p$ (namely $p=2$ for types $B_n$, $C_n$ and $F_4$, $p=3$ for type $G_2$) we say that $G$ satisfies the \emph{edge hypothesis}. Under this assumption, there is a unique minimal noncentral subgroup $N_G \subset G$ of height one with Lie algebra
\[
\Lie N_G = \langle \Lie \gamma^\vee (\Gm) \colon \gamma \text{ short} \rangle  \bigoplus_{\gamma \text{ short}} \mathfrak{g}_\gamma.
\]
The corresponding quotient 
\[
\pi_G \colon G \longrightarrow G/N_G =: \overline{G}
\]
is called the \emph{very special isogeny} of $G$.
\end{definition}
In particular, the group $\overline{G}$ is still simple and simply connected, and its root system is dual to the one of $G$; moreover, the composition 
\[
\pi_{\overline{G}} \circ \pi_G
\]
is equal to the Frobenius morphism of $G$. See \cite{BorelTits} for the original work on the subject, or \cite{CGP15} for more details. When the group $G$ is implicit from the context, we make an abuse of notation and simply write $F$ for $F_G$, $N$ for $N_G$, $\pi$ for $\pi_G$ and so on.\\
The notion of very special isogeny allows to state the following factorisation result, (see \cite[Proposition 2]{Maccan}).

\begin{proposition}
\label{factorisation}
Let us consider an isogeny
\[\xi \colon G \rightarrow G^\prime.\]
Then there exists a unique factorisation of $\xi$:
\begin{itemize}
    \item either as the composition of the very special isogeny, followed by an iterated Frobenius morphism and then a central isogeny (which can only occur when the edge hypothesis is satisfied by $G$);
    \item or as the composition of an iterated Frobenius morphism with a central isogeny.
\end{itemize}
\end{proposition}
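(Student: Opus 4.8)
The plan is to reduce the statement to the combinatorial classification of isogenies of semisimple groups developed in \cite{BorelTits} and \cite{CGP15}, and then to peel off the three types of factors one at a time. We may assume $G$ simple. Fix maximal tori $T \subseteq G$ and $T' \subseteq G'$ with $\xi(T)\subseteq T'$ and compatible Borel subgroups, so that the comorphism $\xi^\ast\colon X(T')\to X(T)$ is an injection of finite index. The structure theory of isogenies attaches to $\xi$ a bijection $\alpha\mapsto\alpha'$ of root systems $\Phi\to\Phi'$ preserving the positive systems, together with exponents $n(\alpha)\geq 0$ such that
\[
\xi^\ast(\alpha') = p^{\,n(\alpha)}\,\alpha \qquad \text{for every } \alpha\in\Phi,
\]
and dually $\xi_\ast(\alpha^\vee)=p^{\,n(\alpha)}\,\alpha'^\vee$ on cocharacters. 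Concretely, $n(\alpha)$ is the order to which the differential $\dd\xi$ vanishes on the root line $\mathfrak{g}_\alpha$: it is $0$ exactly when $\xi$ restricts to an isomorphism $U_\alpha\to U_{\alpha'}$, and positive when $\dd\xi$ kills $\mathfrak{g}_\alpha$.

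The heart of the argument is to pin down the possible exponents. Adjointness of $\xi^\ast$ and $\xi_\ast$ for the canonical pairing gives, for any two roots $\alpha,\beta$,
\[
\langle \alpha',\beta'^\vee\rangle = \tfrac{p^{\,n(\alpha)}}{p^{\,n(\beta)}}\,\langle \alpha,\beta^\vee\rangle,
\]
and the left-hand side is an integer. Applying this to a long root $\alpha$ and an adjacent short root $\beta$ in a rank-two subsystem, where $\langle\beta,\alpha^\vee\rangle=-1$ and $\langle\alpha,\beta^\vee\rangle=-m$ with $m$ the multiplicity of the bond, forces $p^{\,n(\alpha)}\mid p^{\,n(\beta)}$ and $p^{\,n(\beta)}\mid m\,p^{\,n(\alpha)}$. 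Hence $n(\beta)-n(\alpha)\in\{0,1\}$, and the value $1$ can occur only if $p\mid m$, that is, precisely when $G$ satisfies the edge hypothesis; moreover $n$ is then constant on the long roots and on the short roots, with the short exponent exceeding the long one by exactly $1$. In the simply laced case, or whenever the edge hypothesis fails, $n$ is constant on all of $\Phi$.

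With this in hand I would distinguish two cases and build the factorisation. Write $m\defeq\min_\alpha n(\alpha)$, attained on the long roots. If $n$ is constant equal to $m$, then the root-datum data of $\xi$ and of the $m$-th iterated Frobenius $F^m$ agree up to a bijection with all exponents $0$, so $\xi$ factors as $\theta\circ F^m$ with $\theta$ an isogeny all of whose root exponents vanish; such a $\theta$ restricts to an isomorphism on every root subgroup, so its kernel meets no $U_\alpha$ and is contained in the maximal torus, hence, being finite and normal in a connected group, is central, whence $\theta$ is a central isogeny. If instead the short exponent is $m+1$, the edge hypothesis holds and the very special isogeny $\pi\colon G\to\overline{G}$ is defined; by construction $\dd\pi$ kills exactly the short root lines, so $\pi$ has exponents $1$ on short and $0$ on long roots. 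Then $F^m\circ\pi$ has exponents $m+1$ on short and $m$ on long roots, matching those of $\xi$, and the same reasoning shows $\xi=\theta\circ F^m\circ\pi$ with $\theta$ central. Uniqueness follows because the bijection $\alpha\mapsto\alpha'$ and the exponents $n(\alpha)$ are intrinsic invariants of $\xi$: they determine $m$ and whether a very special factor occurs, after which $\theta$ is the uniquely induced map on the quotient.

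The step I expect to be the main obstacle is the passage from the numerical matching of exponents to an actual factorisation of homomorphisms, namely verifying that $G^m=\ker(F^m)$ (respectively $\ker(F^m\circ\pi)$) is contained in $\ker\xi$ so that $\xi$ descends to the quotient. On root subgroups this is immediate from the computed exponents, and the differential comparison handles the infinitesimal directions; the delicate point is controlling $\ker\xi\cap T$ and reconciling it with the prescribed behaviour on $T$ coming from $\xi^\ast$. Once the kernels are shown to be nested the induced map on each quotient is manifestly a central isogeny, and the uniqueness is then formal.
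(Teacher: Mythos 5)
The paper does not actually prove this proposition here: it is imported wholesale from the companion work (\cite[Proposition 2]{Maccan}, itself resting on \cite{BorelTits} and \cite{CGP15}). Your proposal is therefore a genuinely independent argument, and it follows the classical route: attach to $\xi$ its $p$-morphism of root data (bijection $\alpha\mapsto\alpha'$ and exponents $p^{n(\alpha)}$), use the adjunction between $\xi^\ast$ and $\xi_\ast$ on a multiple bond to force $n(\beta)-n(\alpha)\in\{0,1\}$ with the value $1$ only under the edge hypothesis, and then match exponents against those of $F^m$ or $F^m\circ\pi$. The numerical analysis is correct (including the implicit observation that when the exponents jump the pairing computation dualises the Dynkin diagram, consistent with $\overline{G}$ having the dual root system), and the uniqueness argument is fine since $m=\min_\alpha n(\alpha)$ and the presence of a very special factor are read off from the invariants of $\xi$.

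Two points need shoring up. First, the step you flag as the obstacle — that $\ker F^m$ (resp.\ $\ker(F^m\circ\pi)$) is contained in $\ker\xi$ — does close, but you should say how: $G^m$ is generated by $T_m$ and the $(U_\alpha)_m$ via the big-cell decomposition, the root subgroups are handled by the exponents, and for the torus one uses that $G$ is \emph{simply connected}, so $Y(T)=\Z\Phi^\vee$ and $\xi_\ast(Y(T))\subseteq p^mY(T')$, whence by adjunction and perfectness of the pairing $\xi^\ast(X(T'))\subseteq p^mX(T)$, i.e.\ $T_m\subseteq\ker\xi$; for the very special factor, $N$ has height one, so $N\subseteq\ker\xi$ reduces to $\dd\xi$ killing $\Lie N$, which follows from the short exponents being $\geq 1$ and from $\xi_\ast(\gamma^\vee)=p\,\gamma'^\vee$ having zero differential. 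Without simple connectedness this torus step can fail, so the hypothesis is not decorative. Second, your claim that a finite normal subgroup scheme of a connected group is central is false in general (e.g.\ $\alpha_p\trianglelefteq\alpha_p\rtimes\Gm$); the correct salvage is that once $\ker\theta\subseteq T$, normality forces each commutator $(u,n)$ with $u\in U_\alpha$, $n\in\ker\theta$ to lie in $U_\alpha\cap T=1$, so $\ker\theta$ is centralised by all root subgroups and by $T$, hence central. With these repairs the argument is complete and is, in substance, the proof the cited references give.
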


\begin{remark}
\label{minimale}
In particular, this allows us to speak of isogenies \emph{with no central factor}, which are the ones we are interested in when classifying parabolic subgroups. Kernels of such isogenies are totally ordered by inclusion, as follows:
\[
1 \subsetneq N \subsetneq G^1 \subsetneq N^1 \subsetneq \ldots \subsetneq G^m \subsetneq N^m \subsetneq G^{m+1} \subsetneq \ldots,
\]
where $N^m_G$ (which we denote simply by $N^m$ when $G$ is implicit) is the kernel of the composition of a very special isogeny and an $m$-th iterated Frobenius morphism.
\end{remark}

\begin{definition}
    A parabolic subgroup is said to be \emph{of quasi-standard type} if it is of the form
    \[
    P = \bigcap_{\alpha \in \Delta \backslash I} (\ker \xi_\alpha ) P^\alpha
    \]
    for some isogenies $\xi_\alpha$ with no central factor.
\end{definition}

Let us notice that by \Cref{factorisation}, the subgroup $\ker \xi_\alpha$ is necessarily of the form $G^mP^\alpha$ or $N^m_G P^\alpha$. Moreover, the latter are enough to classify all parabolic subgroups having maximal reduced part, in all types except for $G_2$, in view of the following result (see \cite[Proposition 4]{Maccan}):

\begin{proposition}
\label{kerphi}
Let $P$ be a parabolic subgroup of $G$ such that its reduced subgroup is equal to $P^\alpha$ for some simple root $\alpha$.\\
Then there exists a unique isogeny $\xi$ with no central factor such that
\[
P = (\ker \xi) P^\alpha,
\]
unless $G$ is of type $G_2$ in characteristic $2$ and $\alpha$ is the short simple root.
\end{proposition}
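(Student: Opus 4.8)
The plan is to encode $P$ by a single integer-valued function on roots and then match it against the profiles coming from no central factor isogenies. By the decomposition (\ref{product}) we have $P = U_P^-\rtimes P^\alpha$ with $U_P^- = \prod_\gamma (U_P^-\cap U_{-\gamma})$, the product taken over the positive roots $\gamma$ with $\alpha\in\Supp(\gamma)$; the remaining negative root groups already lie in $P_{\text{red}}=P^\alpha$. Each factor is an infinitesimal subgroup scheme of $U_{-\gamma}\cong\Ga$ that is stable under $T\subset P$, and since $T$ acts on $U_{-\gamma}$ through the nontrivial character $-\gamma$ its defining additive polynomial must be a monomial; hence $U_P^-\cap U_{-\gamma} = u_{-\gamma}(\ker F^{m_\gamma})$ for a single integer $m_\gamma\defeq\height(U_{-\gamma}\cap P)\ge0$. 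Thus $P$ is entirely determined by the function $\gamma\mapsto m_\gamma$, and it suffices to determine which functions arise.

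First I would record the profiles contributed by the relevant isogenies. By \Cref{factorisation} and \Cref{minimale} the kernels of isogenies with no central factor form the chain $G^m\subsetneq N^m\subsetneq G^{m+1}$; intersecting with $U_{-\gamma}$ and using that the very special isogeny is purely inseparable of degree $p$ on short root groups and an isomorphism on long ones, one finds that $G^m$ gives the constant profile $m_\gamma\equiv m$, while $N^m$ gives $m_\gamma=m$ for $\gamma$ long and $m_\gamma=m+1$ for $\gamma$ short. Since both the highest long and the highest short root have full support, both length classes occur among the $\gamma$ with $\alpha\in\Supp(\gamma)$, so the profile recovers the kernel; this already yields uniqueness of $\xi$. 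The whole content of the statement is therefore that the function attached to any such $P$ is one of these two profiles.

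To constrain $\gamma\mapsto m_\gamma$ I would extract two families of relations from the fact that $P$ is a subgroup containing $B$ and normalised by the Levi $L$ of $P^\alpha$. Applying the Chevalley commutator formula to $u_{-\gamma}(a)$ and $u_{-\delta}(b)$ with $a\in\ker F^{m_\gamma}$ and $b\in\ker F^{m_\delta}$, and noting that $ab$ ranges over elements of height up to $\min(m_\gamma,m_\delta)$, closure of $U_P^-$ forces $m_{\gamma+\delta}\ge\min(m_\gamma,m_\delta)$ whenever $\gamma+\delta$ is a root and the relevant structure constant is nonzero in $k$. Conjugating $u_{-\gamma}$ by $u_{\pm\beta}$ with $\beta\in\Phi_I$ (so that $\beta$ leaves the $\alpha$-coefficient unchanged, and $U_{\pm\beta}\subset L\subset P$) and using that $L$ normalises $U_P^-$ gives $m_\gamma=m_{\gamma\pm\beta}$ whenever the connecting constants survive in $k$. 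The second family makes $m_\gamma$ constant along root strings within each graded piece of the nilradical, and the first links the graded pieces; together they should force $m_\gamma$ to depend only on the length of $\gamma$, with the short value no smaller than the long one.

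The hard part is the remaining case-by-case check that these relations admit no profile beyond the two above --- in particular that the short and long values never differ by more than one and are never reversed. This is exactly where the characteristic intervenes through the structure constants, which are integers of absolute value at most the largest bond multiplicity: when $p$ exceeds all of them every relation above is active, all $m_\gamma$ coincide, and $P=G^mP^\alpha$ is of standard type. When the edge hypothesis holds, the constants governing the short--long interaction vanish in $k$, which is precisely what lets the short roots carry one extra unit of height and produces $P=N^mP^\alpha$; one then verifies type by type ($B_n$, $C_n$, $F_4$ in characteristic two and $G_2$ in characteristic three) that no further slack survives. The single genuine failure is $G_2$ in characteristic two with $\alpha$ short: there a structure constant equal to $2$ vanishes although the very special isogeny --- living only in characteristic three --- is unavailable, so the graded piece over $\alpha$ acquires extra $L$-stable infinitesimal subgroups whose profiles are realised by no no central factor isogeny. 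These are the exotic parabolics excluded from the statement, and isolating them precisely is the delicate point of the argument.
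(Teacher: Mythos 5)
The paper does not prove \Cref{kerphi} in this text at all: it is imported from \cite[Proposition 4]{Maccan}, so there is no in-paper argument to measure your proposal against. Your outline does follow the strategy of that reference and of \cite{Wenzel}: encode $P$ by the height function $\gamma\mapsto m_\gamma$ (your monomial-additive-polynomial argument for $T$-stable subgroup schemes of $\Ga$ is exactly \cite[Proposition 8]{Wenzel}), compute the profiles of $G^m$ and $N^m$ on root subgroups (these match \Cref{rosa_nero_verde}(c)), note that the profile determines the kernel because both lengths occur with full support (whence uniqueness), and then argue that no other profile is admissible. All of these reductions are correct, and you correctly locate the exception at $G_2$, $p=2$, $\alpha$ short, where extra maximal restricted Lie subalgebras appear.

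The gap is that the entire content of the proposition --- that the only admissible profiles are the constant one and the one with short $=$ long $+1$ --- is asserted ("should force", "one then verifies type by type") rather than proved. Your relations, as stated, are not enough. The commutator inequalities $m_{\gamma+\delta}\ge\min(m_\gamma,m_\delta)$ are only lower bounds; in type $B_n$ with $p=2$ the reversed profile (long $=m+1$, short $=m$) passes every such test, since the long roots form a closed subsystem and every decomposition of a short positive root as a sum of two positive roots has a short summand, giving only $m\ge m$. Ruling it out requires bracketing a negative long root vector against a \emph{positive short} root vector (e.g.\ $[\mathfrak g_{-\varepsilon_1-\varepsilon_2},\mathfrak g_{\varepsilon_2}]=\pm\mathfrak g_{-\varepsilon_1}$), i.e.\ conjugation by positive root groups possibly outside $\Phi_I$; and note that your conjugation relation cannot literally be the equality $m_\gamma=m_{\gamma\pm\beta}$, since that would force short and long heights to coincide and contradict the $N^m$ profile --- only the direction whose structure constant survives gives an inequality. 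Symmetrically, the upper bound short $\le$ long $+1$ does not follow from anything you write down; it needs either the quadratic terms $u_{-\gamma-2\delta}(\pm ab^2)$ of the commutator formula or a reduction modulo $G^m$ to a classification of restricted Lie subalgebras containing $\Lie P^\alpha$ (which is precisely how the $G_2$ exception, \Cref{h_l}, is detected). These verifications are the proposition; as written you have a correct plan and correct bookkeeping, but not a proof.
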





\begin{remark}
\label{minimality}
With respect to the order given in \Cref{minimale}, $\xi$ is \emph{minimal}: in other words, for any isogeny $\zeta$ such that $\ker \zeta$ is strictly contained in $\ker \xi$, the subgroup $P$ is not contained in $(\ker \zeta)P^\alpha$. This is a crucial point in the proof of \Cref{mainnnn}.
\end{remark}


\subsection{Contractions}
\label{sec:contractions}
Consider a homogeneous variety
\[
X= G/P, \quad \text{with } P_{\text{red}} = P_I
\] 
and denote as $o$ its base point. We can define the Schubert divisors of $X$, associated to simple roots, as
\begin{align}
\label{Dalpha}
D_\alpha\defeq \overline{B^- s_\alpha o}, \quad \alpha \in \Delta \backslash I.
\end{align}
In \cite[Lemma 3.19]{Maccan}, we build a finite family of morphisms
\begin{align}
\label{f_alpha}
f_\alpha \colon X  \longrightarrow G/Q^\alpha \defeq \Proj \bigoplus_{m\geq 0} H^0(X, \mathcal{O}_X(mD_\alpha)), \quad \alpha \in \Delta \backslash I.
\end{align}
We construct $f_\alpha$ as the the unique contraction on $G/P$ such that the Schubert curves 
\[
C_\beta \defeq \overline{U_{-\beta} o}, \quad \beta \in \Delta \backslash I,
\]
which are smooth, are all contracted to a point except for $C_\alpha$. Then we show that $Q^\alpha$ is the subgroup generated by $P$ and $P^\alpha$.

\begin{lemma}
    In all types except for $G_2$ in characteristic $2$, there is a unique isogeny $\xi_\alpha$ with no central factor, such that 
    \[
Q^\alpha = (\ker \xi_\alpha) P^\alpha.
\]
\end{lemma}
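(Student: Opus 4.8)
The plan is to identify $Q^\alpha$ with a parabolic subgroup whose reduced part is exactly $P^\alpha$, and then invoke \Cref{kerphi} to produce the isogeny. The first step is to establish that $Q^\alpha = \langle P, P^\alpha \rangle$ has maximal reduced part, i.e.\ that $(Q^\alpha)_{\text{red}} = P^\alpha$. By construction $P^\alpha \subseteq Q^\alpha$, so $(Q^\alpha)_{\text{red}} \supseteq P^\alpha$; since $P^\alpha$ is already a maximal reduced parabolic (its Levi has simple roots $\Delta \backslash \{\alpha\}$, omitting only one simple root), the reduced part of $Q^\alpha$ is either $P^\alpha$ or all of $G$. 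The latter is impossible because $Q^\alpha = G/Q^\alpha$ would be trivial, whereas the contraction $f_\alpha$ from \eqref{f_alpha} is nonconstant: it does not contract the Schubert curve $C_\alpha$. Hence $(Q^\alpha)_{\text{red}} = P^\alpha$, and in particular $Q^\alpha$ is a genuine parabolic subgroup with maximal reduced part.

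Once this is in place, I would simply apply \Cref{kerphi} to the parabolic $Q^\alpha$: it asserts that whenever a parabolic subgroup has reduced part $P^\alpha$, there is a \emph{unique} isogeny $\xi_\alpha$ with no central factor such that $Q^\alpha = (\ker \xi_\alpha) P^\alpha$, precisely with the stated exception of type $G_2$ in characteristic $2$ with $\alpha$ short. This gives both existence and uniqueness of $\xi_\alpha$ for free, so the lemma follows directly from \Cref{kerphi} once the reduced-part computation is done.

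The only real subtlety, and what I expect to be the main obstacle, is the exclusion of the $G_2$ case. The exceptional situation in \Cref{kerphi} is triggered by $\alpha$ being the \emph{short} simple root of $G_2$ in characteristic $2$; I must make sure that the statement of the present lemma, which excludes $G_2$ in characteristic $2$ \emph{entirely}, is consistent and correctly inherited. The cleanest way is to observe that the contraction construction indexes the $f_\alpha$ by $\alpha \in \Delta \backslash I$, and in the $G_2$, $p=2$ setting the exotic parabolics arising from the two maximal $p$-Lie subalgebras (mentioned in the introduction) prevent the clean identification $Q^\alpha = (\ker \xi_\alpha)P^\alpha$ for the short root. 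So the exclusion in the lemma is exactly the pullback of the exclusion in \Cref{kerphi}, and no further work beyond citing that proposition is required.

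I should also confirm that $Q^\alpha$ really is the group-theoretic object $\langle P, P^\alpha \rangle$ and not merely the stabiliser of the image of $f_\alpha$; but this is already asserted at the end of \Cref{sec:contractions} (``we show that $Q^\alpha$ is the subgroup generated by $P$ and $P^\alpha$''), so it may be taken as given. With that identification and the reduced-part computation, the proof reduces to a one-line appeal to \Cref{kerphi}.
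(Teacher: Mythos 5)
Your proposal is correct and follows essentially the same route as the paper: the paper's proof is exactly the observation that $Q^\alpha$ has maximal reduced part $P^\alpha$ by construction, followed by an appeal to \Cref{kerphi} (together with \Cref{minimality}). Your extra justification that $(Q^\alpha)_{\text{red}}$ cannot be all of $G$ because $f_\alpha$ does not contract $C_\alpha$ is a valid filling-in of the paper's ``by construction,'' and the lemma's blanket exclusion of $G_2$ in characteristic $2$ is simply a safe superset of the exclusion in \Cref{kerphi}, as you note.
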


\begin{proof}
    By construction, $Q^\alpha$ has maximal reduced part equal to $P^\alpha$, hence we can apply \Cref{kerphi} and \Cref{minimality} and we are done.
\end{proof}



\subsection{Height on simple root subgroups}
Let us start by proving a Lemma which is repeatedly used in the proof of \Cref{mainnnn}. This result tells us that the inclusion (\ref{inclusion0}) is \textit{not so far} from being an equality: more precisely, when intersecting with the root subgroup associated to the opposite of a simple root, one gets the same height on both sides.

\begin{lemma}
\label{lem:m_alpha}
 Let $P$ be any parabolic subgroup of $G$ and $\alpha$ a simple root. Then 
 \[U_{-\alpha} \cap P = U_{-\alpha} \cap Q^\alpha.\]
\end{lemma}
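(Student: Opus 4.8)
The plan is to prove the two inclusions separately. Since $P \subseteq Q^\alpha$ by construction (as $Q^\alpha = \langle P, P^\alpha\rangle$), the inclusion $U_{-\alpha} \cap P \subseteq U_{-\alpha} \cap Q^\alpha$ is immediate. The entire content of the lemma is therefore the reverse inclusion $U_{-\alpha} \cap Q^\alpha \subseteq U_{-\alpha} \cap P$; equivalently, since both sides are infinitesimal subgroups of the one-parameter group $U_{-\alpha} \cong \Ga$, and such subgroups are totally ordered and classified by their height, it suffices to show that the height of $U_{-\alpha} \cap Q^\alpha$ does not exceed the height of $U_{-\alpha} \cap P$. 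In other words, I want to show that passing from $P$ to the larger group $Q^\alpha$ does not increase the order of the infinitesimal neighbourhood one sees along the single root line $U_{-\alpha}$.

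First I would reduce to the numerical function $\varphi_P(\gamma) = \height(P \cap U_{-\gamma})$ introduced in the overview of Section~2, so that the claim reads $\varphi_P(\alpha) = \varphi_{Q^\alpha}(\alpha)$. The key structural input is that $Q^\alpha$ is generated by $P$ together with the maximal reduced parabolic $P^\alpha$, and that $P^\alpha$ by definition contains $U_{-\beta}$ for every simple root $\beta \neq \alpha$ but \emph{not} $U_{-\alpha}$. Thus the root subgroups in the negative direction that $P^\alpha$ contributes are exactly those for $\beta \neq \alpha$. Using the decomposition (\ref{product}) of $P$ and the analogous decomposition of $Q^\alpha$ (whose reduced part is $P^\alpha$), one can compute how elements of $U_{-\alpha} \cap Q^\alpha$ arise: any such element lies in the subgroup generated by $U_P^-$ and the root subgroups of $P^\alpha$, and I would track which commutators and products can produce a contribution along $U_{-\alpha}$.

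The heart of the argument is to control the height along $U_{-\alpha}$ when multiplying and taking commutators inside $\langle P, P^\alpha\rangle$. The group $P^\alpha$ contributes only directions $U_{-\beta}$ with $\beta \neq \alpha$, and the point is that bracketing such directions with the infinitesimal part $U_P^-$ of $P$, or with the torus and the positive root groups already in $P$, cannot produce an element of $U_{-\alpha}$ of height strictly larger than what $P \cap U_{-\alpha}$ already realises. This is precisely where the Chevalley structure constants enter: the only way to land in the $-\alpha$ direction is via a Lie-algebra bracket $[X_\gamma, X_\delta]$ with $\gamma + \delta = -\alpha$, and since $\alpha$ is \emph{simple}, the decomposition $-\alpha = \gamma + \delta$ into two roots forces one summand to be a positive root (so it comes from the reduced part of $P$, contributing nothing new to the height) — there is no way to write $-\alpha$ as a sum of two \emph{negative} roots. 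Hence no new infinitesimal thickening along $U_{-\alpha}$ is generated, and the heights agree.

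I expect the main obstacle to be making the last step rigorous at the level of group schemes rather than Lie algebras: the heuristic via brackets $[X_\gamma,X_\delta]$ controls the first-order (height-one) behaviour, but the height is a statement about Frobenius kernels of all orders, so I must argue that the product-and-commutator structure of $Q^\alpha$ does not manufacture higher-order infinitesimal elements in the $-\alpha$ direction beyond those in $P$. The clean way to handle this is to use the scheme-theoretic product decomposition (\ref{product}) applied to $Q^\alpha$ together with the minimality of the isogeny $\xi_\alpha$ from \Cref{minimality}: since $Q^\alpha = (\ker\xi_\alpha)P^\alpha$ with $\xi_\alpha$ minimal among isogenies with $P \subseteq (\ker\xi_\alpha)P^\alpha$, the height of $U_{-\alpha} \cap Q^\alpha = U_{-\alpha} \cap \ker\xi_\alpha$ is exactly dictated by $\xi_\alpha$, and minimality forces this to coincide with $\height(U_{-\alpha}\cap P)$. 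I would therefore phrase the final argument as: the inclusion $P \subseteq (\ker\xi_\alpha)P^\alpha$ gives $U_{-\alpha}\cap P \subseteq U_{-\alpha}\cap Q^\alpha$, while any strictly smaller height would contradict the minimality of $\xi_\alpha$, giving equality.
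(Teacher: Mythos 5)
Your reduction to heights and the easy inclusion are fine, but the two ingredients you offer for the hard inequality both break down. First, the bracket heuristic ``$-\alpha$ cannot be written as a sum of two negative roots'' only controls first-order commutators that land \emph{directly} in the $-\alpha$ direction. It does not address the actual mechanism by which $Q^\alpha=\langle P,P^\alpha\rangle$ could become too thick along $U_{-\alpha}$: the generation process certainly increases the heights along \emph{other} roots $\gamma$ with $\alpha\in\Supp(\gamma)$ (indeed $Q^\alpha$ contains $U_{-\beta}$ entirely for $\beta\neq\alpha$), and since $Q^\alpha$ has maximal reduced part it is of the form $(\ker\xi_\alpha)P^\alpha$ by \Cref{kerphi}, so by \Cref{rosa_nero_verde}(c) its height on $U_{-\alpha}$ is rigidly tied to its height on every other root of the same length. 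A single root $\gamma\ni\alpha$ with $\varphi_P(\gamma)$ much larger than $\varphi_P(\alpha)$ would force $\ker\xi_\alpha$, hence $U_{-\alpha}\cap Q^\alpha$, to be large; ruling this out is the real content of the lemma, and your argument never touches it. Making it rigorous via commutator identities would require the structure constants not to vanish, i.e.\ it would reprove Wenzel's theorem and fail in exactly the non-simply-laced small-characteristic cases this paper exists to handle.

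Second, the appeal to \Cref{minimality} is circular. Minimality says that $\ker\xi_\alpha$ is the smallest kernel $K$ in the chain of \Cref{minimale} with $P\subseteq KP^\alpha$; to deduce from this that $\height(U_{-\alpha}\cap\ker\xi_\alpha)=\varphi_P(\alpha)$ you would need to exhibit some $K$ with $\height(K\cap U_{-\alpha})=\varphi_P(\alpha)$ and $P\subseteq KP^\alpha$, and the latter containment amounts to the upper bound $\varphi_P(\gamma)\leq\height(K\cap U_{-\gamma})$ for \emph{all} $\gamma$ containing $\alpha$ in their support --- which is essentially the statement being proved (and close to \Cref{mainnnn} itself). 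The paper avoids all of this by a geometric argument with no characteristic restrictions: it takes the Stein factorisation $g_\alpha=h_\alpha\circ f_\alpha$ of the morphism defined by $|D_\alpha|$, uses $g_\alpha^\ast\mathcal{O}(H_\alpha)=\mathcal{O}_X(D_\alpha)$ and the intersection number $D_\alpha\cdot C_\alpha=1$ to show $f_\alpha^\ast E_\alpha=D_\alpha$ and $(f_\alpha)_\ast C_\alpha=S_\alpha$, concludes that $f_\alpha$ restricts to an isomorphism $C_\alpha\to S_\alpha$, and reads off $U_{-\alpha}/(U_{-\alpha}\cap P)=U_{-\alpha}/(U_{-\alpha}\cap Q^\alpha)$ on the open cells. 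You should either adopt that route or find a genuinely new way to bound $\varphi_P$ on all roots supported at $\alpha$; as written, the proposal does not close the gap.
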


\begin{proof}
The natural $G$-equivariant morphism
\[
g_\alpha \colon X = G/P \longrightarrow \proj(H^0(X, \mathcal{O}_X(D_\alpha))^\vee)
\]
is well-defined since $D_\alpha$ is globally generated. The canonical section of $H^0(X,\mathcal{O}_X(D_\alpha))$, corresponding to a hyperplane $H_\alpha$, gives the equality
\begin{align}
    \label{galpha}
    g_\alpha^\ast \mathcal{O}(H_\alpha) = \mathcal{O}_X(D_\alpha).
\end{align}
On the other hand, the inclusion of the $k$-subalgebra generated by elements of degree one into the direct sum $\oplus_{m\geq 0} H^0(X, \mathcal{O}_X(mD_\alpha))$ gives a finite morphism $h_\alpha$, making the following diagram commute.
\[
\begin{tikzcd}
    X = G/P \arrow[r, "g_\alpha"] \arrow[dd, "f_\alpha"] & \proj(H^0(X, \mathcal{O}_X(D_\alpha))^\vee)\\
    &\\
    \Proj \bigoplus_{m\geq 0} H^0(X, \mathcal{O}_X(mD_\alpha)) = G/Q^\alpha \arrow[uur, "h_\alpha"] &
\end{tikzcd}
\]
Since $f_\alpha$ is a contraction and $h_\alpha$ is finite, the above diagram is the Stein factorisation of the morphism $g_\alpha$. Let us denote as $E_\alpha$ and $S_\alpha$ respectively the Schubert divisor and the Schubert curve in $G/Q^\alpha$. Then set-theoretically the pre-image of $E_\alpha$ is $D_\alpha$, while the image of $C_\alpha$ is $S_\alpha$. This means that there are some positive integers $m_\alpha$ and $n_\alpha$ such that $f_\alpha^\ast E_\alpha = m_\alpha D_\alpha$ and $(f_\alpha)_\ast C_\alpha = n_\alpha S_\alpha$. The equality (\ref{galpha}) yields 
\[
D_\alpha = g^\ast_\alpha H_\alpha = f^\ast_\alpha h_\alpha^\ast H_\alpha,\]
hence $m_\alpha$ must be equal to $1$. Moreover, 
\[
1 = D_\alpha \cdot C_\alpha = f^\ast_\alpha E_\alpha \cdot C_\alpha = E_\alpha \cdot (f_\alpha)_\ast C_\alpha = n_\alpha E_\alpha \cdot S_\alpha = n_\alpha.
\]
This means that $f_\alpha$ restricts to an isomorphism from $C_\alpha$ to $S_\alpha$: considering the restriction to the respective affine open cells, we get
\[
U_{-\alpha} / ( U_{-\alpha} \cap P) = U_{-\alpha} / (U_{-\alpha} \cap Q^\alpha)
\]
as wanted.
\end{proof}

\section{Classification}

This section is dedicated to the proof of \Cref{mainnnn}. By \Cref{Gsimple}, we may and will assume the group $G$ to be simple.

\subsection{Ingredients for the proof}
We will use the classification of \cite[Proposition 3 and 4]{Maccan}, as well as \Cref{lem:m_alpha} and the factorisation of isogenies. Before moving on to the proof, which is a case-by-case argument, let us fix some common notation and state a few remarks of which we repetitively make use below.\\
We associate to $P$ the numerical function
\[
\varphi \colon \Phi \longrightarrow \N \cup \{\infty \}
\]
defined as follows: for a root $\gamma$, the integer $\varphi(\gamma)$ is the height of the intersection 
\[
U_{-\gamma} \cap P\]
when $\gamma$ is positive and not in the Levi subgroup of $P_I$; and we extend it to $\varphi(\gamma) = \infty$ otherwise. Analogously, $\varphi_i$ denotes the associated function to $Q^i \defeq Q^{\alpha_i}$, where we keep the notation of \cite{Bourbaki} concerning the standard bases of root systems.

\begin{remark}
\label{rosa_nero_verde}
$(a)$ : In particular, we can reformulate \Cref{lem:m_alpha} as 
\[
\varphi (\alpha_i) = \varphi_i(\alpha_i) = \height((\ker \xi_i) \cap U_{-\alpha_i}).
\]
$(b)$ : 
    Let us recall that two parabolic subgroups $P$ and $P^\prime$, with respective associated functions $\varphi$ and $\varphi^\prime$ and with same reduced part $P_I$, 
    coincide if and only if 
    \[
    P\cap U_{-\gamma} = P^\prime \cap U_{-\gamma} \quad \text{for all } \gamma \in \Phi^+ \backslash \Phi_I,
    \]
    which is equivalent to saying that $\varphi$ and $\varphi^\prime$ are equal. This is proven in \cite[Proposition 8]{Wenzel}.\\
    $(c)$ : The functions $\varphi_i$ always coincide on roots of the same length. More precisely, if the isogeny $\xi_i$ is an iterated Frobenius, then $\varphi_i$ is constant on all positive roots not in $\Phi_I$. On the other hand, if $\xi_i$ is the composition of an $m$-th iterated Frobenius morphism and a very special isogeny, then
    \[
    m+1 = \varphi_i(\gamma) = \varphi_i(\delta)+1
    \]
    for all short roots $\gamma$ and all long roots $\delta$ not belonging to $\Phi_I$. The two integers above are invariant under the action of the Weyl group, which has one orbit if $G$ is simply laced and two orbits (of long and short roots respectively) otherwise. 
\end{remark}

Let us recall the following result on structure constants (see \cite[Chapter VII, $25.2$]{Humphreys}).

\begin{lemma}
Let $\gamma \neq \pm \delta$ be roots and $r$ a natural number such that 
\[
\gamma-r\delta, \ldots, \gamma,\gamma+\delta
\]
are roots but $\gamma-(r+1)\delta$ is not. 
Then the corresponding vectors of the Chevalley basis satisfy
\[
[X_\gamma,X_\delta]= \pm (r+1) X_{\gamma+\delta}.
\]
\end{lemma}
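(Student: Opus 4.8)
The plan is to recognise this as the classical Chevalley commutator relation and to prove it through the representation theory of $\mathfrak{sl}_2$. First I would observe that $\Lie G$ is graded by the root lattice, so $[\mathfrak{g}_\gamma, \mathfrak{g}_\delta] \subseteq \mathfrak{g}_{\gamma+\delta}$; since $\gamma+\delta \in \Phi$ this target is the line $kX_{\gamma+\delta}$, whence $[X_\gamma, X_\delta] = N\, X_{\gamma+\delta}$ for a scalar $N$, and everything reduces to showing $N = \pm(r+1)$. As the structure constants of a Chevalley basis are universal integers, independent of the characteristic, it is enough to carry out the computation once, over $\C$.

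Next I would exploit the $\mathfrak{sl}_2$-triple $\{X_\delta, X_{-\delta}, H_\delta\}$. Let $q \geq 1$ be maximal with $\gamma + q\delta \in \Phi$. The $\delta$-string module $M = \bigoplus_{i=-r}^{q} \mathfrak{g}_{\gamma+i\delta}$ is stable under the adjoint action of this subalgebra, and $\ad(H_\delta)$ acts on $\mathfrak{g}_{\gamma+i\delta}$ by the weight $\langle \gamma, \delta^\vee\rangle + 2i = (r-q)+2i$; these weights run through the symmetric unbroken string $-(r+q), \ldots, r+q$ with multiplicity one, so $M$ is the irreducible module of highest weight $r+q$. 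Writing $\ad(X_\delta) X_{\gamma+i\delta} = a_i X_{\gamma+(i+1)\delta}$ and $\ad(X_{-\delta}) X_{\gamma+i\delta} = b_i X_{\gamma+(i-1)\delta}$, the identity $[X_\delta, X_{-\delta}] = H_\delta$ gives the recursion $b_i a_{i-1} - a_i b_{i+1} = (r-q)+2i$ with boundary values $a_q = b_{-r} = 0$; solving it produces the gauge-invariant product $a_0 b_1 = q(r+1)$.

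To extract $N$ itself from this product I would bring in the symmetry relations of a Chevalley basis: the Chevalley involution $X_\alpha \mapsto -X_{-\alpha}$ yields $N_{-\alpha,-\beta} = -N_{\alpha,\beta}$, while invariance of the Killing form (equivalently, the Jacobi identity along a triangle of roots summing to zero) relates the relevant constants through the factors $|\cdot|^2$. Combined, these give $b_1 = \frac{|\gamma|^2}{|\gamma+\delta|^2}\, a_0$, so that $a_0^2 = q(r+1)\,\frac{|\gamma+\delta|^2}{|\gamma|^2}$. The root-string identity $q\,|\gamma+\delta|^2 = (r+1)\,|\gamma|^2$ — which follows from $\langle\gamma,\delta^\vee\rangle = r-q$ and the fact that root strings have length at most four, hence can be verified on the rank-two subsystems — then collapses this to $a_0^2 = (r+1)^2$; since $N = -a_0$ by antisymmetry, we conclude $N = \pm(r+1)$.

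The main obstacle is not the $\mathfrak{sl}_2$ computation, which is routine, but pinning the coefficient down to exactly $\pm(r+1)$: the naive calculation only controls the product $a_0 b_1$, and turning it into the value of a single constant forces one to use the Chevalley-basis symmetry relations together with the root-length identity. These encode precisely the integrality of the structure constants and are the real substance of Chevalley's theorem. The sign, by contrast, cannot be fixed in general, since it depends on the arbitrary choice of signs of the root vectors; only the magnitude $r+1$ is intrinsic, which is exactly what the statement asserts.
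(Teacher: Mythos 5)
Your argument is correct, but note that the paper does not prove this lemma at all: it is quoted verbatim from Humphreys (Chapter VII, 25.2), and what you have written is essentially a faithful reconstruction of that standard proof --- the $\mathfrak{sl}_2$-string computation giving the gauge-invariant product $a_0b_1 = q(r+1)$, followed by the Chevalley involution and the Killing-form/Jacobi relations along roots summing to zero to convert the product into the square of a single constant, and the root-length identity $q\,|\gamma+\delta|^2 = (r+1)\,|\gamma|^2$ to collapse it to $(r+1)^2$. Since your route coincides with the cited source, there is nothing further to compare.
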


The integer $r$ only depends on the roots $\gamma$ and $\delta$, and it is uniquely defined because we are working with a fixed Chevalley basis. We denote it as 
\[\mathcal{N}(\gamma,\delta)\]
in what follows. This allows us to formulate the following slight adaptation of an argument in \cite{Wenzel}. 

\begin{lemma}
\label{enne}
    Let $\gamma$ and $\delta$ be roots such that $\gamma+\delta$ is a root but $\gamma-\delta$ is not. Then
    \[
\varphi(\gamma+\delta) \geq \min \{ \varphi(\gamma),\varphi(\delta) \}.
\]
\end{lemma}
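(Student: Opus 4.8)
The plan is to play off the two descriptions of the condition ``$u_{-\eta}(w)\in P$'': membership in the group $P$, versus the scalar condition $w^{p^{\varphi(\eta)}}=0$. Recall that for $\eta\in\Phi^+\backslash\Phi_I$ the intersection $U_{-\eta}\cap P$ is the infinitesimal subgroup of $U_{-\eta}\cong\Ga$ cut out by $w^{p^{\varphi(\eta)}}=0$, so that over a $k$-algebra $R$ one has $u_{-\eta}(w)\in P(R)$ exactly when $w^{p^{\varphi(\eta)}}=0$ in $R$. First I would dispose of the degenerate cases. If $\min\{\varphi(\gamma),\varphi(\delta)\}=\infty$, then both $\gamma,\delta$ lie in $\Phi_I$ or are negative, whence $\gamma+\delta$ does too and $\varphi(\gamma+\delta)=\infty$ by convention; if exactly one value is infinite the argument below applies verbatim after letting the corresponding parameter range freely instead of over an infinitesimal neighbourhood. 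Hence I may assume $\gamma,\delta\in\Phi^+\backslash\Phi_I$, so that $a\defeq\varphi(\gamma)$ and $b\defeq\varphi(\delta)$ are finite; note $\gamma+\delta\in\Phi^+\backslash\Phi_I$ as well, since its support contains the out-of-$I$ simple roots occurring in $\gamma$. Set $m\defeq\min\{a,b\}$ and $c\defeq\varphi(\gamma+\delta)$.

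The core is a single commutator computation, carried out functorially over the test algebra $R\defeq k[s,t]/(s^{p^m},t^{p^m})$. Since $m\le a$ and $m\le b$, the universal scalars satisfy $s^{p^a}=t^{p^b}=0$, so $u_{-\gamma}(s)$ and $u_{-\delta}(t)$ both lie in $P(R)$, and hence so does their commutator. Being a product of negative root group elements attached to roots in $\Phi^+\backslash\Phi_I$, this commutator lies in $R_u^-(P_{\text{red}})(R)$, hence in $U_P^-(R)=(P\cap R_u^-(P_{\text{red}}))(R)$. Expanding by the Chevalley commutator formula,
\[
[u_{-\gamma}(s),u_{-\delta}(t)]=\prod_{i,j\ge 1}u_{-(i\gamma+j\delta)}\bigl(c_{ij}\,s^i t^j\bigr),
\]
the product ranging over $(i,j)$ with $i\gamma+j\delta\in\Phi$. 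The decisive point, and the place where the hypothesis enters, is the leading coefficient $c_{11}$: applying the structure-constant Lemma above to the pair $(-\gamma,-\delta)$ — legitimate since $(-\gamma)-(-\delta)=-(\gamma-\delta)\notin\Phi$ forces $\mathcal N(-\gamma,-\delta)=0$ — gives $[X_{-\gamma},X_{-\delta}]=\pm X_{-(\gamma+\delta)}$, so that $c_{11}=\pm1$. This sign is a unit in every characteristic; had $\gamma-\delta$ been a root, $c_{11}$ would equal $\pm(\mathcal N+1)$ and could vanish modulo $p$, which is exactly the phenomenon forcing the later case-by-case analysis.

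It remains to isolate the $U_{-(\gamma+\delta)}$-factor and compare heights. Among all roots $i\gamma+j\delta$ occurring, $\gamma+\delta$ is the unique one with $i+j=2$, and $\gamma,\delta$ are linearly independent; hence, reading the commutator through the unique product decomposition (\ref{product}) of $U_P^-$, its $U_{-(\gamma+\delta)}$-component is unaffected by the reordering of the higher factors and equals $u_{-(\gamma+\delta)}(\pm st)$. By the uniqueness in (\ref{product}), each factor of that decomposition lies in $P$ individually, so $u_{-(\gamma+\delta)}(\pm st)\in P(R)$, which by the height description means $(st)^{p^c}=s^{p^c}t^{p^c}=0$ in $R$. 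But in $R=k[s,t]/(s^{p^m},t^{p^m})$ the monomial $s^{p^c}t^{p^c}$ vanishes if and only if $p^c\ge p^m$, that is $c\ge m$. This is precisely $\varphi(\gamma+\delta)\ge\min\{\varphi(\gamma),\varphi(\delta)\}$, as wanted.

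The conceptual crux is the nonvanishing $c_{11}=\pm1$, which the hypothesis $\gamma-\delta\notin\Phi$ guarantees characteristic-freely; this is the single ingredient that breaks down in small characteristic when $\gamma-\delta$ is a root, and recognising this is what organises the subsequent case analysis. The main technical obstacle I anticipate is the middle claim, that the $U_{-(\gamma+\delta)}$-component passes intact from the raw Chevalley product to the normalised decomposition (\ref{product}), since the two factorisations use different orderings and one must check the higher root groups contribute nothing in total degree $2$. I would secure this either by a weight argument, using a cocharacter $\lambda$ with $\langle\lambda,\gamma\rangle=\langle\lambda,\delta\rangle=1$ to grade the unipotent group by $i+j$ and single out the degree-$2$ part, or by passing to the quotient by the normal subgroup generated by the $U_{-(i\gamma+j\delta)}$ with $i+j\ge 3$, in which the commutator reduces to $u_{-(\gamma+\delta)}(\pm st)$ outright.
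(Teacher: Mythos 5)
Your proof is correct and follows essentially the same route as the paper's: the Chevalley commutator formula, the observation that $c_{11}=\mathcal{N}(\gamma,\delta)+1=\pm 1$ because $\gamma-\delta$ is not a root, extraction of the $U_{-(\gamma+\delta)}$-factor via the product decomposition (\ref{product}) from \cite[Proposition 8]{Wenzel}, and the resulting height comparison. Your explicit test algebra $k[s,t]/(s^{p^m},t^{p^m})$ in fact makes rigorous the paper's slightly elliptical step ``$(ab)^{p^m}$ vanishes, hence either $a^{p^m}$ or $b^{p^m}$ is zero,'' which is how the scheme-theoretic statement should be read.
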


\begin{proof}
    Let $m \defeq \varphi(\gamma+\delta)$ be the height of 
    \[
    U_{-\gamma-\delta}\cap P.\]
    Let us consider $a,b \in \Ga$ such that $u_{-\gamma}(a)$ and $u_{-\delta}(b)$ are in $P$. Then the commutator
    \[
    (u_{-\gamma}(a),u_{-\delta}(b)) = \prod 
    u_{-i\gamma-j\delta} (c_{ij}a^ib^j),
    \]
     where the product ranges over the finite set of couples of positive integers $(i,j)$ such that $i\gamma+j\delta$ is a root, has a factor $u_{-\gamma-\delta}(\pm ab)$, because 
     \[
     c_{11} = \mathcal{N}(\gamma,\delta) = \pm 1.
     \]
     By \cite[Proposition $8$]{Wenzel}, $u_{-\gamma-\delta}(ab)$ belongs to $U_{-\gamma-\delta} \cap P$, hence $(ab)^{p^m}$ vanishes. In particular either $a^{p^m}$ or $b^{p^m}$ is zero, which means that the minimum between the height of $U_{-\gamma} \cap P$ and $U_{-\delta} \cap P$ is less than or equal to $m$, as wanted. 
\end{proof}


\subsection{Type $B_n$ and $C_n$}
In this section we consider a simply connected group $G$ of type $B_n$ or $C_n$, over an algebraically closed field of characteristic $p=2$. Let
\begin{itemize}
    \item $\overline{P}$ be the pull-back of $P$ via the very special isogeny 
    \[
    \overline{\pi} \defeq \pi_{\overline{G}} \colon \overline{G} \longrightarrow G;
    \]
    \item $\alpha_i \leftrightarrow \overline{\alpha}_i$ the bijection on simple roots induced by $\overline{\pi}$, which we recall exchanges long and short roots;
    \item $\psi$ and $\psi_i$ the respective associated functions to $\overline{P}$ and to
    \[
    \ker (\xi_i \circ \overline{\pi})P^{\overline{\alpha}_i} = \langle Q,P^{\overline{\alpha}_i} \rangle.
    \]
\end{itemize}
The situation is summarized in the following diagram.

\begin{center}
    \begin{tikzcd}
     \overline{P} \arrow[r, hookrightarrow] \arrow[dd] & \bigcap_{\alpha_i \in \Delta \backslash I} \ker (\xi_i \circ \overline{\pi}) P^{\overline{\alpha}_i} \arrow[r, hookrightarrow]  & \overline{G} \arrow[dd,"\overline{\pi}"]\\
     &&\\
        P \arrow[r, hookrightarrow]  & \bigcap_{\alpha_i \in \Delta \backslash I} (\ker \xi_i)P^{\alpha_i} \arrow[r, hookrightarrow]  & G
    \end{tikzcd}
\end{center}

\begin{lemma}
    \label{BimpliesC}
    If (\ref{inclusion0}) is an equality for all parabolic subgroups of a group of type $B_n$, then the same holds in type $C_n$.
\end{lemma}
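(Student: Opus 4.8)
The plan is to deduce the type-$C_n$ case from the (assumed) type-$B_n$ case by transporting everything through the very special isogeny $\overline{\pi} = \pi_{\overline{G}} \colon \overline{G} \to G$. So let $G$ be of type $C_n$ and let $P$ be a parabolic subgroup of $G$. Since the root system of $\overline{G} = G/N_G$ is dual to that of $G$, the group $\overline{G}$ is simple, simply connected of type $B_n$, and $\overline{\pi}$ is a finite, faithfully flat isogeny with kernel $N_{\overline{G}}$. Hence $H \mapsto \overline{\pi}^{-1}(H)$ is an inclusion-preserving bijection between subgroup schemes of $G$ and subgroup schemes of $\overline{G}$ containing $N_{\overline{G}}$, with inverse $\overline{H} \mapsto \overline{\pi}(\overline{H})$; being a lattice isomorphism, it commutes with finite intersections and with the operation $\langle -,- \rangle$.

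Set $\overline{P} \defeq \overline{\pi}^{-1}(P)$, the parabolic subgroup of $\overline{G}$ appearing in the diagram above. Its reduced part is $P_{\overline{I}}$, where $\overline{I}$ is the image of $I$ under the bijection $\alpha_i \leftrightarrow \overline{\alpha}_i$ on simple roots; in particular $\Delta(\overline{G}) \setminus \overline{I} = \{ \overline{\alpha}_i : \alpha_i \in \Delta \setminus I \}$, so the two index sets match. The key compatibility I would establish is
\[
\overline{\pi}^{-1}(Q^i) = \langle \overline{P}, P^{\overline{\alpha}_i} \rangle \qquad \text{for every } \alpha_i \in \Delta \setminus I,
\]
which is precisely the commutativity of the left and right squares in the diagram. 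For this I would use that $\overline{\pi}$ sends root subgroups to root subgroups and exchanges long and short roots according to $\alpha_i \leftrightarrow \overline{\alpha}_i$, so that $\overline{\pi}(P^{\overline{\alpha}_i}) = P^{\alpha_i}$, together with $\ker \overline{\pi} = N_{\overline{G}} \subseteq \overline{P} \subseteq \langle \overline{P}, P^{\overline{\alpha}_i} \rangle$; feeding $Q^i = \langle P, P^{\alpha_i} \rangle$ through the lattice isomorphism and absorbing $N_{\overline{G}}$ then yields the claim. Equivalently, this compatibility can be checked on the numerical functions $\varphi, \psi, \psi_i$, by comparing the height shifts that $\overline{\pi}$ induces on long and short root subgroups and invoking \Cref{rosa_nero_verde}(b).

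Granting the compatibility, the conclusion is formal. By the standing hypothesis, the inclusion (\ref{inclusion0}) is an equality for $\overline{P}$ in the type-$B_n$ group $\overline{G}$, so
\[
\overline{\pi}^{-1}(P) = \overline{P} = \bigcap_i \langle \overline{P}, P^{\overline{\alpha}_i} \rangle = \bigcap_i \overline{\pi}^{-1}(Q^i) = \overline{\pi}^{-1}\Big( \bigcap_i Q^i \Big),
\]
using that $\overline{\pi}^{-1}$ commutes with intersection. Applying $\overline{\pi}$ and using surjectivity, i.e.\ $\overline{\pi}(\overline{\pi}^{-1}(H)) = H$, gives $P = \bigcap_i Q^i$, which is exactly the equality (\ref{inclusion0}) in type $C_n$.

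The only genuine obstacle is the displayed compatibility $\overline{\pi}^{-1}(Q^i) = \langle \overline{P}, P^{\overline{\alpha}_i} \rangle$: everything else is the formal lattice correspondence attached to a faithfully flat isogeny. This is the step where one must pin down the behaviour of the very special isogeny on root subgroups (and the resulting long/short height shift), and verify that forming the smallest parabolic containing $P$ and $P^{\alpha_i}$ really does commute with pullback along $\overline{\pi}$; the inclusion $\ker \overline{\pi} \subseteq \overline{P}$ is what makes the two operations compatible.
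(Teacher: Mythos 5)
Your proof is correct, and the overall strategy is the same as the paper's: reduce type $C_n$ to type $B_n$ by transporting $P$ through the very special isogeny $\overline{\pi}\colon \overline{G} \to G$. The execution differs in a way worth noting. The paper works entirely at the level of the numerical functions: it records that pulling back along $\overline{\pi}$ shifts heights by $+1$ on long root subgroups of $G$ and by $0$ on short ones, translates the hypothesis $\psi(\overline{\gamma}) = \min_i \psi_i(\overline{\gamma})$ into $\varphi(\gamma) = \min_i \varphi_i(\gamma)$ case by case according to root length, and concludes via \Cref{rosa_nero_verde}(b) (Wenzel's criterion that equal numerical functions force equal parabolics). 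You instead run a purely formal argument through the lattice isomorphism $H \mapsto \overline{\pi}^{-1}(H)$ between subgroups of $G$ and subgroups of $\overline{G}$ containing $N_{\overline{G}}$, which commutes with intersections and joins; this avoids the length bookkeeping entirely. The one point you rightly isolate, $\overline{\pi}^{-1}(Q^i) = \langle \overline{P}, P^{\overline{\alpha}_i}\rangle$, is genuinely the crux, and your justification is the correct one: $\overline{\pi}^{-1}(P^{\alpha_i})$ equals $N_{\overline{G}}P^{\overline{\alpha}_i}$ rather than $P^{\overline{\alpha}_i}$, but since $N_{\overline{G}} = \ker\overline{\pi} \subseteq \overline{P}$ it is absorbed in the join. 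The paper asserts this same identification (in the form $\ker(\xi_i \circ \overline{\pi})P^{\overline{\alpha}_i} = \langle \overline{P}, P^{\overline{\alpha}_i}\rangle$) in its setup without comment, so your version actually makes explicit a step the paper leaves implicit; conversely, the paper's height-shift computation is the concrete incarnation of your abstract correspondence and is reused elsewhere in Section 2, which is presumably why the author prefers it.
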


\begin{proof}
    We make use of the above diagram: let $G$ be of type $C_n$, then by assumption (\ref{inclusion0}) holds for the parabolic subgroup $\overline{P}$, which means that
    \[
    \psi(\overline{\gamma}) = \min_i\{ \psi_i(\overline{\gamma})\}
    \]
    for all positive roots $\gamma$ of $G$, where $\gamma \leftrightarrow \overline{\gamma}$ is the bijection induced by the very special isogeny. This implies
    \begin{align*}
        \varphi(\gamma) & = \psi(\overline{\gamma}) = \min_i \{ \psi_i(\overline{\gamma}) \} = \min_i \{ \varphi_i(\gamma) \} & \text{if $\gamma$ is short,}\\
        \varphi(\gamma) & = \psi(\overline{\gamma}) -1 = \min_i \{ \psi_i(\overline{\gamma}) -1 \} = \min_i \{ \varphi_i(\gamma) \} & \text{if $\gamma$ is long,}
    \end{align*}
    so we are done.
\end{proof}
In order to prove \Cref{mainnnn} for a group of type $B_n$, we proceed by induction on $n$, of which the first step is the case $n=2$ below.

\begin{lemma}
\label{initialisation}
    Let 
    \[P \subset (\ker \xi_1)P^{\alpha_1} \cap (\ker \xi_2)P^{\alpha_2}\subset \Spin_5
    \]
    be a parabolic subgroup in type $B_2$, with reduced part the Borel subgroup.\\
    Then (\ref{inclusion0}) is an equality.
\end{lemma}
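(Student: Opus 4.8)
The plan is to pass to the numerical functions. By \Cref{rosa_nero_verde}$(b)$, since both $P$ and $Q^1\cap Q^2$ have reduced part $B$, it suffices to prove that the function $\varphi$ of $P$ agrees with $\mu\defeq\min\{\varphi_1,\varphi_2\}$ (the function of $Q^1\cap Q^2$) on the four positive roots $\alpha_1,\alpha_2,\alpha_1+\alpha_2,\alpha_1+2\alpha_2$, where $\alpha_1$ and $\alpha_1+2\alpha_2$ are long and $\alpha_2,\alpha_1+\alpha_2$ short. The inclusion (\ref{inclusion0}) already gives $\varphi\leq\mu$, so only $\varphi\geq\mu$ is at stake, and on the two simple roots this is exactly \Cref{lem:m_alpha} together with $\varphi_j(\alpha_i)=\infty$ for $j\neq i$. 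Writing $a=\varphi(\alpha_1)$, $b=\varphi(\alpha_2)$, I would first record, from \Cref{rosa_nero_verde}$(c)$ and the minimality of $\xi_1,\xi_2$ (\Cref{minimality}), that $\varphi_1$ is constant equal to $a$ on the two long roots while $\varphi_2$ equals $b$ on the two short roots; in particular $\varphi_1(\alpha_1+2\alpha_2)=a$ and $\varphi_2(\alpha_1+\alpha_2)=b$, which forces $\varphi(\alpha_1+2\alpha_2)\leq a$ and $\varphi(\alpha_1+\alpha_2)\leq\min\{a+1,b\}$.

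For the short non-simple root $\alpha_1+\alpha_2$ the structure constant $\mathcal N(\alpha_1,\alpha_2)=\pm1$ is a unit, so \Cref{enne} applies and yields $\varphi(\alpha_1+\alpha_2)\geq\min\{a,b\}$. Combining this lower bound with the upper bounds above and the minimal isogeny $\xi_1$ realising $Q^1$, a short case distinction according to whether $a\leq b$ pins down $\varphi(\alpha_1+\alpha_2)=\mu(\alpha_1+\alpha_2)$.

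The genuine difficulty, and the main obstacle, is the long root $\alpha_1+2\alpha_2$: its only expression as a sum of two positive roots is $(\alpha_1+\alpha_2)+\alpha_2$, and there $\mathcal N(\alpha_1+\alpha_2,\alpha_2)=\pm2$ \emph{vanishes in characteristic two}, so \Cref{enne} provides no lower bound whatsoever. To circumvent this I would pull $P$ back along the very special isogeny $\overline{\pi}\colon\overline G\to G$ to the parabolic $\overline P\subset\overline G$, which is of type $C_2$. Under the length-reversing bijection on roots, the problematic long root $\alpha_1+2\alpha_2$ corresponds to the \emph{short} root $\overline{\alpha}_1+\overline{\alpha}_2$ of $\overline G$, whose relevant constant $\mathcal N(\overline{\alpha}_1,\overline{\alpha}_2)=\pm1$ is again a unit (and $\overline{\alpha}_1-\overline{\alpha}_2$ is not a root). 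Applying \Cref{enne} to the function $\psi$ of $\overline P$ in $\overline G$ gives
\[
\psi(\overline{\alpha}_1+\overline{\alpha}_2)\;\geq\;\min\{\psi(\overline{\alpha}_1),\,\psi(\overline{\alpha}_2)\}.
\]
Since $\overline{\pi}$ is an isomorphism on root subgroups of roots long in $\overline G$ and a Frobenius on those short in $\overline G$, the heights transform (as in the proof of \Cref{BimpliesC}) by $\psi=\varphi$ on short roots of $G$ and $\psi=\varphi+1$ on long ones; substituting $\psi(\overline{\alpha}_1+\overline{\alpha}_2)=d+1$, $\psi(\overline{\alpha}_1)=a+1$, $\psi(\overline{\alpha}_2)=b$ with $d=\varphi(\alpha_1+2\alpha_2)$ converts the inequality into $d\geq\min\{a,b-1\}$.

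Finally I would assemble the long root: using $\varphi_1(\alpha_1+2\alpha_2)=a$ and the minimality computation $\varphi_2(\alpha_1+2\alpha_2)=\max\{d,b-1\}$, one has $\mu(\alpha_1+2\alpha_2)=\min\{a,\max\{d,b-1\}\}$, and the bound $d\geq\min\{a,b-1\}$ together with $d\leq a$ forces this minimum to equal $d$. Hence $\varphi=\mu$ on all four roots and $P=Q^1\cap Q^2$. The whole proof thus hinges on replacing the commutator relation that is absent in $G$ (the vanishing long-root constant) by the one available in $\overline G$ through the very special isogeny; everything else is the bookkeeping of \Cref{lem:m_alpha}, \Cref{enne} and minimality.
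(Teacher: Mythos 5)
Your proposal is correct and follows essentially the same route as the paper: reduce to the numerical functions via \Cref{rosa_nero_verde}(b), handle the short root $\alpha_1+\alpha_2$ directly with \Cref{enne} (unit structure constant $\mathcal N(\alpha_1,\alpha_2)=\pm1$), and crucially transfer the long root $\alpha_1+2\alpha_2$ — whose only decomposition has structure constant $\pm2$ vanishing in characteristic two — to the short root $\overline{\alpha}_1+\overline{\alpha}_2$ of the type $C_2$ cover via the very special isogeny, closing both cases with the minimality of $\xi_1,\xi_2$. The only (cosmetic) difference is that you package the final step as the min/max identity $\varphi_2(\alpha_1+2\alpha_2)=\max\{d,b-1\}$ and $d=\min\{a,\max\{d,b-1\}\}$, whereas the paper argues by contradiction, showing that a strict inequality would force $P\subseteq G^{r_1}P^{\alpha_1}$ (resp.\ $P\subseteq N^{r_2-1}P^{\alpha_2}$), contradicting the minimality of the isogeny.
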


\begin{proof}
Below is a picture of the respective root systems (which are isomorphic) in order to visualise the very special isogeny and make the proof easier to read: a basis in type $B_2$ is given by a long root $\alpha_1$ and a short root $\alpha_2$, while in type $C_2$ it is given by a short root $\overline{\alpha}_1$ and a long root $\overline{\alpha}_2$.
\begin{center}
\begin{tikzpicture}
    \foreach\ang in {90,180,270,360}{
     \draw[->,cite!80!black,thick] (0,0) -- (\ang:2.4cm);
    }
    \foreach\ang in {45,135,225,315}{
     \draw[->,cite!80!black,thick] (0,0) -- (\ang:3.3cm);
    }
    \node[anchor= west,scale=0.9] at (2.5,0) {$\alpha_1+\alpha_2$};
    \node[anchor= south,scale=0.9] at (2.5,2.5) {$\alpha_1+2\alpha_2$};
    \node[anchor= south,scale=0.9] at (0,2.5)  {\colorbox{cite!30!white}{$\alpha_2$}};
    \node[anchor= west,scale=0.9] at (2.5,-2.5) {\colorbox{cite!30!white}{$\alpha_1$}};
    \node[anchor= north,scale=0.9] at (-2.5,1.7)  {\textbf{TYPE $\mathbf{B_2}$}};
  \end{tikzpicture}
  \hfill
  \begin{tikzpicture}
    \foreach\ang in {90,180,270,360}{
     \draw[->,cite!80!black,thick] (0,0) -- (\ang:2.6cm);
    }
    \foreach\ang in {45,135,225,315}{
     \draw[->,cite!80!black,thick] (0,0) -- (\ang:2cm);
    }
    \node[anchor= west,scale=0.9] at (2.6,0) {$2 \overline{\alpha}_1 +\overline{\alpha}_2$};
    \node[anchor= west,scale=0.9] at (1.6,-1.6) {\colorbox{cite!30!white}{$\overline{\alpha}_1$}};
    \node[anchor= south,scale=0.9] at (1.6,1.6) {$\overline{\alpha}_1 +\overline{\alpha}_2$};
     \node[anchor= south,scale=0.9] at (0,2.6) {\colorbox{cite!30!white}{$\overline{\alpha}_2$}};
     \node[anchor= north,scale=0.9] at (-2,2.1)  {\textbf{TYPE $\mathbf{C_2}$}};
  \end{tikzpicture}
\end{center}
   By \Cref{rosa_nero_verde} (b), it is enough to prove the following:
    \begin{align}
    \label{uno}
        \varphi (\alpha_1+\alpha_2) \geq \min \{ \varphi_1(\alpha_1+\alpha_2), \varphi_2(\alpha_1+\alpha_2) \};\\
    \label{due}
        \varphi (\alpha_1+2\alpha_2) \geq  \min \{ \varphi_1(\alpha_1+2\alpha_2), \varphi_2(\alpha_1+2\alpha_2) \},
    \end{align}
   because the opposite inequalities are already implied by the inclusion (\ref{inclusion0}).\\
   Let us consider the two integers
    \[
    r_1 \defeq \varphi_1(\alpha_1) 
    \quad \text{and} \quad r_2 \defeq \varphi_2(\alpha_2).
    \]
    Given that $\alpha_1$ is a long root, we have either
    \begin{align}
        \label{xi_one}
        \xi_1 = F^{r_1} \quad \text{or} \quad \xi_1 = F^{r_1} \circ \pi,
    \end{align}
    because by \Cref{factorisation} these are the only two isogenies with no central factor whose kernel has height $r_1$ on long root subgroups. Analogously, $\alpha_2$ being a short root, we have either 
     \begin{align}
        \label{xi_two}
        \xi_2 = F^{r_2} \quad \text{or} \quad \xi_2 = F^{r_2-1} \circ \pi.
    \end{align}
    for the same reason.\\
   \textbf{Step 1:} Let us start by considering the root $\alpha_1+\alpha_2$. We have that $\mathcal{N}(\alpha_1,\alpha_2) = \pm 1$, so 
    \begin{align*}
        \varphi(\alpha_1+\alpha_2) & \geq \min \{ \varphi(\alpha_1), \varphi(\alpha_2) \} & \text{by \Cref{enne}},\\
        & = \min \{ \varphi_1(\alpha_1) ,\varphi_2(\alpha_2) \}& \text{by \Cref{rosa_nero_verde} (a)}.
    \end{align*}
    This translates into the inequality
    \begin{align}
        \label{cerchio}
        \varphi (\alpha_1+\alpha_2) \geq \min \{r_1,r_2\}.
    \end{align}
    Since $\alpha_2$ and $\alpha_1+\alpha_2$ are both short, we have 
    \[
    \varphi_2(\alpha_1+\alpha_2) = r_2
    \]
    by \Cref{rosa_nero_verde}(c). Moreover, if $\xi_1$ is an $r_1$-th Frobenius iterated, then
    \[
    \varphi_1(\alpha_1+\alpha_2) = r_1
    \] so that by (\ref{cerchio}) we are done. So let us assume that 
    \begin{align}
         \label{somethin}
    \xi_1 = F^{r_1} \circ \pi,
    \end{align}
    which in particular means 
    \[
    \varphi_1(\alpha_1+\alpha_2)= r_1+1.
    \]
    What is left to prove in this case is that 
    \[
    \varphi (\alpha_1+\alpha_2) \geq \min \{r_1+1,r_2\}.
    \]
    Now, if $r_2 \leq r_1$, then (\ref{cerchio}) becomes
    \[
    \varphi(\alpha_1+\alpha_2) \geq \min \{r_1,r_2\}  = r_2 = \min \{r_1+1,r_2\}
    \]
    and we are done. So let us assume that $r_1<r_2$: again by (\ref{cerchio}), it is enough to get a contradiction with the assumption
    \[
    \varphi(\alpha_1+\alpha_2) = r_1.
    \]
    Let us assume this last equality to be true, and remark that by (\ref{xi_two}), since $\alpha_1+2\alpha_2$ is a long root we have 
    \[
    \varphi_2(\alpha_1+2\alpha_2 ) \leq r_2
    \]
    in both cases. Next, the inclusion (\ref{inclusion0}) gives
    \[
    \varphi(\alpha_1+2\alpha_2) \leq \min \{ \varphi_1 (\alpha_1+2\alpha_2), \varphi_2(\alpha_1+2\alpha_2)\} \leq \min \{r_1,r_2\} = r_1.
    \]
    In particular, all positive roots $\gamma$ whose support contains $\alpha_1$ satisfy $\varphi(\gamma) \leq r_1$. In other words, the subgroup $P$ is contained in $G^{r_1} P^{\alpha_1}$. However, this implies that 
    \[
    \langle P, P^{\alpha_1}\rangle = (\ker \xi_1)P^{\alpha_1} \subset G^{r_1} P^{\alpha_1},
    \]
    which contradicts (\ref{somethin}).\\\\
    \textbf{Step 2:} Let us move on to the root $\alpha_1+2\alpha_2$: consider the pull-back
    \[
    \overline{P} \defeq \overline{\pi}^{-1}(P) \subseteq \ker (\xi_1 \circ \overline{\pi} ) P^{\overline{\alpha}_1} \cap \ker (\xi_2 \circ \overline{\pi}) P^{\overline{\alpha}_2}, 
    \]
    which is a parabolic subgroup of $\Sp_4$.\\
    The very special isogeny $\overline{\pi}$ sends $\overline{\alpha}_1 +\overline{\alpha}_2$ to $\alpha_1+2\alpha_2$: in particular, pulling back via $\overline{\pi}$ gives
    \begin{align}
    \label{pallino0}
    \psi(\overline{\alpha}_1+\overline{\alpha}_2) = \varphi(\alpha_1+2\alpha_2) +1 
    \end{align}
    and the analogous equalities hold for $\psi_1$ and $\psi_2$. Thus proving (\ref{due}) is equivalent to showing that
    \begin{align}
    \label{pallino}
        \psi(\overline{\alpha}_1 +\overline{\alpha}_2) \geq \min \{ \psi_1(\overline{\alpha}_1+\overline{\alpha}_2 ) ,\psi_2(\overline{\alpha}_1+\overline{\alpha}_2) \}.
    \end{align}
    Now, the structure constant $\mathcal{N}(\overline{\alpha}_1,\overline{\alpha}_2)$ is equal to $\pm 1$, so we have
    \begin{align*}
         \psi (\overline{\alpha}_1 +\overline{\alpha}_2)& \geq \min \{ \psi(\overline{\alpha}_1),\psi(\overline{\alpha}_2)\} &  \text{by \Cref{enne}}\\
        & = \min \{ \psi_1(\overline{\alpha}_1),\psi_2(\overline{\alpha}_2)\} & \text{by \Cref{rosa_nero_verde} (a)}.
    \end{align*}
    This translates into the inequality
    \begin{align}
        \label{aiut}
        \psi(\overline{\alpha}_1+\overline{\alpha}_2) \geq \min \{ r_1+1,r_2\}.
    \end{align}
    Next, $\overline{\alpha}_1 +\overline{\alpha}_2$ and $\overline{\alpha}_1$ are both short, which implies
    \[ 
    \psi_1(\overline{\alpha}_1+\overline{\alpha}_2) = \psi_1(\overline{\alpha}_1)  = r_1+1
    \]
    by \Cref{rosa_nero_verde}(c). On the other hand, $\overline{\alpha}_2$ is long, hence (again by \Cref{factorisation})
    \[
    \psi_2(\overline{\alpha}_1 +\overline{\alpha}_2) = \begin{cases}
        \psi_2(\overline{\alpha}_2) = r_2 \quad \quad \text{ if } \xi_2 \circ \overline{\pi} = F^{r_2};\\
        \psi_2(\overline{\alpha}_2)+1 = r_2 +1 \quad \text{ if } \xi_2 = F^{r_2}.
    \end{cases}
    \]
    In the first case, (\ref{pallino}) automatically holds, so let us assume we are in the second one, namely that the isogeny $\xi_2$ is an $r_2$-th iterated Frobenius, which in particular means that 
    \[
    \psi_2(\overline{\alpha}_1+\overline{\alpha}_2 ) = r_2+1.
    \]
    What is left to prove in this case is that
    \[
    \psi(\overline{\alpha}_1 +\overline{\alpha}_2 )\geq \min \{ r_1+1,r_2+1\}.
    \]
    Now, if $r_2>r_1$, then (\ref{aiut}) becomes
    \[
    \psi(\overline{\alpha}_1+\overline{\alpha}_2) \geq \min \{ r_1+1,r_2\} = r_1+1 = \min \{r_1+1,r_2+1\}
    \]
    and we are done. So let us assume that $r_2\leq r_1$. Again by (\ref{aiut}), it is enough to get a contradiction with the assumption
    \[
    \psi (\overline{\alpha}_1+\overline{\alpha}_2) =r_2.
    \]
    If this last assumption holds, then pushing forward to $P$ and using (\ref{pallino0}) gives
    \[
    \varphi(\alpha_1+2\alpha_2) = \psi (\overline{\alpha}_1+\overline{\alpha}_2) -1= r_2-1.
    \]
    Putting this together with (\ref{uno}), which has been proved in Step 1 above, yields
    \[
    \varphi(\alpha_1+\alpha_2) = \min \{ \varphi_1(\alpha_1+\alpha_2), \varphi_2(\alpha_1+\alpha_2)\} \leq r_2.
    \]
    In particular, the subgroup $P$ is contained in $\ker(F^{r_2-1}\circ \overline{\pi}) P^{\alpha_2}$,
    thus implying 
    \[
    \langle P, P^{\alpha_2}\rangle =  (\ker \xi_2)P^{\alpha_1} \subset \ker(F^{r_2-1}\circ \overline{\pi}) P^{\alpha_2},
    \]
    which contradicts the assumption that $\xi_2 = F^{r_2}$.
\end{proof}

\begin{proposition}
    Let $P$ be as in the diagram above in type $B_n$ or $C_n$.\\
    Then (\ref{inclusion0}) is an equality.
\end{proposition}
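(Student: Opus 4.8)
The strategy is to turn the two preceding lemmas into an induction on the rank $n$. By \Cref{BimpliesC} it is enough to settle type $B_n$, after which type $C_n$ follows formally. We argue by induction on $n$, the base case $n=2$ being \Cref{initialisation}: note that whenever $\Delta \backslash I$ is a singleton the parabolic $P$ is maximal, $P^\alpha = P_I = P_{\text{red}}$ for the unique $\alpha \in \Delta \backslash I$, and (\ref{inclusion0}) reads $P \subseteq \langle P, P_{\text{red}}\rangle = P$, so there is nothing to prove; this is why \Cref{initialisation} (the case $I = \varnothing$) already exhausts the rank-two situation. By \Cref{rosa_nero_verde}(b) the equality in (\ref{inclusion0}) is equivalent to $\varphi(\gamma) = \min_i \varphi_i(\gamma)$ for every $\gamma \in \Phi^+ \backslash \Phi_I$. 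The inequality $\varphi \leq \min_i \varphi_i$ is exactly (\ref{inclusion0}), so the task is the reverse bound $\varphi(\gamma) \geq \min_i \varphi_i(\gamma)$. By \Cref{rosa_nero_verde}(c) the right-hand side takes a single value $L$ on long roots and a single value $S$ on short roots, and since each $\xi_i$ raises its short value by at most one unit over its long value we have $S \in \{L, L+1\}$.

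For the inductive step the plan is to clear away every root except the highest short one. On simple roots $\varphi(\alpha_i) = \varphi_i(\alpha_i) = \min_j \varphi_j(\alpha_i)$ by \Cref{lem:m_alpha} together with (\ref{inclusion0}). A long root $e_i - e_j$ is a sum of consecutive long simple roots, so peeling them off one at a time and applying \Cref{enne} (the relevant differences are never roots) bounds $\varphi(e_i - e_j)$ below by the minimum of the $\varphi(\alpha_k)$, hence by $L$. A long root $e_i + e_j$ with $i \geq 2$, and every short root $e_k$ with $k \geq 2$, omits $\alpha_1$ and therefore lies in the standard type $B_{n-1}$ subsystem obtained by dropping $\alpha_1$; here the inductive hypothesis applies to the induced parabolic and gives $\varphi \geq L$, respectively $\varphi \geq S$, once one checks that $\varphi$ restricts to the numerical function of that smaller parabolic and that the two length-values can only increase under restriction to a subsystem. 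Finally a long root $e_1 + e_j$ can be written, for $n \geq 3$, as $(e_1 - e_l) + (e_j + e_l)$ with $l \notin \{1, j\}$; the difference is not a root and both summands have already been treated, so \Cref{enne} again yields $\varphi(e_1 + e_j) \geq L$.

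This leaves a single root, the highest short root $e_1 = \alpha_1 + \cdots + \alpha_n$, which has full support and so escapes every proper standard subsystem; this is the crux. The obstruction is structural: a short root is never the sum of two short roots, so the only decomposition available, $e_1 = \alpha_1 + e_2$, feeds \Cref{enne} a long summand and yields merely $\varphi(e_1) \geq \min\{L, S\} = L$, which falls one short of the goal precisely when $S = L+1$. When $S = L$ we are therefore done. When $S = L+1$, the plan is to conclude by a minimality argument in the spirit of Step~1 of \Cref{initialisation}: the value $L$ is then realised by an index $i_0$ whose isogeny must carry a very special factor (otherwise its short and long values would coincide and $S$ could not exceed $L$), so that $\langle P, P^{\alpha_{i_0}}\rangle = N^{L} P^{\alpha_{i_0}}$; assuming $\varphi(e_1) = L$, one propagates this bound, with the help of the very special isogeny pull-back $\overline{\pi}$ of the diagram above and the values already known on the long roots, to all positive roots dominating $\alpha_{i_0}$, concluding that $P \subseteq G^{L} P^{\alpha_{i_0}}$. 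Since $G^{L} \subsetneq N^{L}$, this contradicts the minimality of the isogeny recorded in \Cref{minimality}. I expect this propagation step --- transporting the single equality $\varphi(e_1) = L$ to all the short roots lying above $\alpha_{i_0}$, which is where the pull-back to type $C_n$ and the precise structure constants are really used --- to be the main obstacle, exactly as the analogous contradictions were the heart of the rank-two computation.
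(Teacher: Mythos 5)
Your skeleton coincides with the paper's: reduce $C_n$ to $B_n$ via \Cref{BimpliesC}, induct on $n$ with \Cref{initialisation} as base case, dispose of roots omitting the first simple root by the inductive hypothesis on a Levi subgroup, handle the remaining long roots by \Cref{enne} with structure constants $\pm 1$, and isolate the full-support short root $e_1 = \varepsilon_1$ as the one genuinely new case. Two points, however. First, a framing error: the quantities $L$ and $S$ are not well defined, because $\min_i \varphi_i(\gamma)$ is effectively a minimum over the $i$ with $\alpha_i \in \Supp(\gamma)$ (the other $\varphi_i(\gamma)$ are infinite), so it depends on the support of $\gamma$ and not only on its length. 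Your individual case computations survive if one replaces $L$ and $S$ by the support-dependent minima, as the paper does, but statements such as ``the right-hand side takes a single value $L$ on long roots'' are false as written and the inequality $\min\{L,S\}=L$ needs to be re-argued index by index.

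Second, and more seriously, the one step you leave as a ``plan'' --- ruling out $\varphi(e_1)=\varphi_1(\alpha_1)$ when $\xi_1 = F^{r_1}\circ\overline{\pi}$ and $r_1 < \min_{k\geq 2}\varphi_k(e_1)$ --- is a genuine gap in your write-up, and you have misdiagnosed where the difficulty lies. There is no propagation to perform and no pull-back to type $C_n$ needed here: in type $B_n$ the root $e_1$ is the \emph{unique} short positive root whose support contains $\alpha_1 = e_1-e_2$. Every long root $\gamma$ with $\alpha_1\in\Supp(\gamma)$ already satisfies $\varphi(\gamma)\leq\varphi_1(\gamma)=r_1$ by the inclusion (\ref{inclusion0}) and \Cref{rosa_nero_verde}(c), so the single assumed equality $\varphi(e_1)=r_1$ immediately forces $P\subseteq G^{r_1}P^{\alpha_1}$, whence $\langle P,P^{\alpha_1}\rangle\subseteq G^{r_1}P^{\alpha_1}\subsetneq N^{r_1}P^{\alpha_1}$, contradicting \Cref{minimality}. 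This one-line observation is exactly how the paper closes Step 3; without it your argument, as written, does not terminate. (This also explains why the rank-two case is harder than the inductive step: in $B_2$ the long root $\alpha_1+2\alpha_2$ has full support as well, which is why \Cref{initialisation} needs the pull-back by $\overline{\pi}$, whereas for $n\geq 3$ all full-support long roots are handled by \Cref{enne} as you do.)
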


\begin{proof}
    \Cref{BimpliesC} and \Cref{initialisation} imply that the statement holds for $n= 2$.\\
    Let us assume it to be true in types $B_n$ and $C_n$, with respective basis
    \[
    \alpha_1, \, \ldots, \, \alpha_n \quad \text{and} \quad \overline{\alpha}_1, \, \ldots, \, \overline{\alpha}_n,\]
    and consider the groups of type $B_{n+1}$ and $C_{n+1}$, with respective basis 
    \[
    \alpha_0,\, \alpha_1, \, \ldots, \, \alpha_n \quad \text{and} \quad \overline{\alpha}_0,\, \overline{\alpha}_1,\, \ldots, \, \overline{\alpha}_n.
    \]
    First, by \Cref{BimpliesC} we can reduce to the case of a group of type $B_{n+1}$. Moreover, by \Cref{rosa_nero_verde}(b), 
    it suffices to show that 
    \begin{align}
    \label{tre}
    \varphi(\gamma) = \min \{ \varphi_i (\gamma) \colon \alpha_i \in \Supp(\gamma)\} \quad \text{for all } \gamma \in \Phi^+\backslash \Phi_I^+.  
    \end{align}
    \textbf{Step 1:} Let us assume that the simple root $\alpha_0$ is not in the support of $\gamma$ and let $L^{\alpha_0}$ be the Levi subgroup associated to $\alpha_0$ (namely, with basis all simple roots except for the first one): then $L^{\alpha_0}$ is of type $B_n$ and the inclusion
    \[
    U_{-\gamma} \subset L^{\alpha_0}
    \]
    is satisfied. We can thus apply the induction hypothesis and conclude that (\ref{tre}) holds for $\gamma$.\\\\
    \textbf{Step 2:} Let us consider a root of the form
    \[
    \gamma = \varepsilon_0-\varepsilon_i, \quad 1 \leq i\leq n.
    \]
    Denoting as $L^{\alpha_n}$ the Levi subgroup associated to the last simple root $\alpha_n$, we see that $L^{\alpha_n}$ is of type $A_{n-1}$ hence simply laced; moreover, the inclusion
    \[
    U_{-\gamma} \subset L^{\alpha_n}
    \]
    is satisfied. In particular, the intersection $P\cap L^{\alpha_n}$ is a parabolic subgroup of $L^{\alpha_n}$, hence of standard type, from which we deduce the equality (\ref{tre}) for $\gamma$.\\\\
   \textbf{Step 3:} Next, let us consider the root \[
   \gamma = \varepsilon_0.\]
    The structure constant $\mathcal{N}(\varepsilon_0-\varepsilon_1,\varepsilon_1)$ is equal to $\pm 1$, so we have
     \begin{align*}
        \varphi(\varepsilon_0) & \geq \min \{ \varphi(\varepsilon_0-\varepsilon_1), \varphi(\varepsilon_1) \} & \text{by \Cref{enne}}\\
        & = \min \{ \varphi_0(\alpha_0) ,\varphi(\varepsilon_1) \}& \text{by \Cref{rosa_nero_verde} (a)}\\
        & = \min \{\varphi_0(\alpha_0), \varphi_i(\varepsilon_1), \, i >0\} & \text{by Step 1}\\
        & = \min \{ \varphi_0(\alpha_0), \varphi_i(\varepsilon_0), \, i>0\} & \text{by \Cref{rosa_nero_verde}(c)}.
    \end{align*}
    If $\alpha_0$ belongs to $I$ then we are done because the minimum does not involve $\varphi_0(\alpha_0)$. Hence we can assume $\alpha_0$ to be in $\Delta \backslash I$ and set
    \[
    r_0 \defeq \varphi_0(\alpha_0) \quad \text{and} \quad r \defeq \min \{\varphi_i (\varepsilon_0), \, i>0\}.
    \]
    Since $\alpha_0$ is a long root, by \Cref{factorisation} the isogeny $\xi_0$ is either equal to an $r_0$-th iterated Frobenius or to its composition with a very special isogeny. In the first case we directly have (\ref{tre}) and we are done, so let us assume that
    \begin{align}
    \label{somethin0}
    \xi_0 = F^{r_0} \circ \overline{\pi}.
    \end{align}
    In particular, $\varphi_0(\varepsilon_0)$ is equal to $m+1$. Thus, the inequalities above become
    \[
    \varphi(\varepsilon_0) \geq \min \{r_0,r\}.
    \]
    If $r$ is less than or equal to $r_0$ we are again done, so let us assume $r_0<r$. We want to exclude the possibility of a strict inequality i.e.
    \[
    r_0 = \varphi(\varepsilon_0).
    \]
    However, since $\varepsilon_0$ is the only short root containing $\alpha_0$ in its support, this last assumption implies that $P$ is contained in $G^{r_0} P^{\alpha_0}$. In particular, 
    \[
    \langle P, P^{\alpha_0} \rangle = (\ker \xi_0)P^{\alpha_0} \subset G^m P^{\alpha_0},
    \]
    which contradicts (\ref{somethin0}). 
    This allows to conclude that (\ref{tre}) is always true for the root $\varepsilon_0$.\\\\
   \textbf{Step 4:} We have left to prove (\ref{tre}) for the roots
   \[
   \gamma = \varepsilon_0+\varepsilon_i, \quad 1 \leq i\leq n.\]
   In this case, the support is equal to the whole of $\Delta$. If $i \neq 1$, we can use the fact that the structure constant $\mathcal{N}(\varepsilon_0-\varepsilon_1,\varepsilon_1+\varepsilon_i)$ is equal to $\pm 1$  to obtain
   \begin{align*}
        \varphi(\varepsilon_0+\varepsilon_i) & \geq \min \{ \varphi(\varepsilon_0-\varepsilon_1), \varphi(\varepsilon_1+\varepsilon_i) \} & \text{by \Cref{enne}}\\
        & = \min \{ \varphi_0(\alpha_0) ,\varphi(\varepsilon_1+\varepsilon_i) \}& \text{by \Cref{rosa_nero_verde} (a)}\\
        & = \min \{\varphi_0(\alpha_0), \varphi_l(\varepsilon_1+\varepsilon_i), \, l >0\} & \text{because (\ref{tre}) holds for } \varepsilon_1+\varepsilon_i\\
        & = \min_j \{ \varphi_j(\varepsilon_0+\varepsilon_i)\} & \text{by \Cref{rosa_nero_verde}(c)}.
    \end{align*}
    On the other hand, if $i=1$, we can proceed analogously using the fact that the structure constant  $\mathcal{N}(\varepsilon_0-\varepsilon_n,\varepsilon_1+\varepsilon_n)$ is equal to $\pm 1$, and we get 
    \begin{align*}
        \varphi(\varepsilon_0+\varepsilon_1) & \geq \min \{ \varphi(\varepsilon_0-\varepsilon_n), \varphi(\varepsilon_1+\varepsilon_n) \} & \text{by \Cref{enne}}\\
        & = \min \{\varphi_j(\varepsilon_0-\varepsilon_n), \, j<n, \, \varphi(\varepsilon_1+\varepsilon_n) \} & \text{because (\ref{tre}) holds for } \varepsilon_0-\varepsilon_n\\
        & = \min \{\varphi_j(\varepsilon_0-\varepsilon_n), \, j<n, \, \varphi_k(\varepsilon_1+\varepsilon_n), \, k >0\} & \text{because (\ref{tre}) holds for } \varepsilon_1+\varepsilon_n\\
        & = \min_j \{ \varphi_j(\varepsilon_0+\varepsilon_1)\} & \text{by \Cref{rosa_nero_verde}(c)}.
    \end{align*}
    This concludes the proof.
\end{proof}
    
\begin{lemma}
    Let us assume that we are working over a field of characteristic $p=3$. Then any parabolic subgroup of a simple group of type $B_n$ or $C_n$ is standard. In particular, \Cref{mainnnn} also holds in this case.
\end{lemma}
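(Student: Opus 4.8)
The plan is to show that in characteristic three a group of type $B_n$ or $C_n$ falls under the hypotheses of \cite{Wenzel}, because the structure constants that the argument needs are never divisible by $3$. First I would observe that the Dynkin diagrams of $B_n$ and $C_n$ have an edge of multiplicity two, so the edge hypothesis fails for $p=3$: there is no very special isogeny, and by \Cref{factorisation} every isogeny with no central factor is an iterated Frobenius morphism. Since each $Q^\alpha=\langle P,P^\alpha\rangle$ has maximal reduced part $P^\alpha$ (recalled in \Cref{sec:contractions}), \Cref{kerphi} applies and gives $Q^\alpha=(\ker\xi_\alpha)P^\alpha=G^{m_\alpha}P^\alpha$, where $m_\alpha=\varphi_\alpha(\alpha)=\varphi(\alpha)$ by \Cref{lem:m_alpha}. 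Hence the candidate intersection $\bigcap_{\alpha\in\Delta\backslash I}Q^\alpha$ is already of standard type, and proving \Cref{mainnnn} here is the same as proving standardness.

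Next I would examine the structure constants. For a root system of type $B_n$ or $C_n$ every $\delta$-string has length at most three, and whenever $\gamma+\delta$ is a root the integer $r$ of the structure-constant lemma above satisfies $r\leq 1$; therefore $\mathcal{N}(\gamma,\delta)=\pm(r+1)\in\{\pm 1,\pm 2\}$, which is invertible in $k$ since $p=3$. This is precisely what makes \Cref{enne} go through: its proof extracts the single factor $u_{-\gamma-\delta}(c_{11}ab)$ of the Chevalley commutator, with $c_{11}=\mathcal{N}(\gamma,\delta)$, and deduces from $(c_{11}ab)^{p^m}=0$ that $(ab)^{p^m}=0$. With $c_{11}$ merely invertible the same conclusion holds, so I would record a strengthened version of \Cref{enne} valid for every pair of roots $\gamma,\delta$ with $\gamma+\delta$ a root, dropping the requirement that $\gamma-\delta$ be a non-root.

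With this inequality in hand, the reverse inequality being the inclusion (\ref{inclusion0}), it remains to prove
\[
\varphi(\gamma)\geq \min\{\varphi_\alpha(\gamma)\colon \alpha\in\Supp(\gamma)\cap(\Delta\backslash I)\}
\]
for every $\gamma\in\Phi^+\backslash\Phi_I$, by induction on the height of $\gamma$. Because each $\xi_\alpha$ is an iterated Frobenius, \Cref{rosa_nero_verde}(c) makes $\varphi_\alpha$ constant equal to $m_\alpha$ on all positive roots whose support contains $\alpha$, so the right-hand side is just $\min\{m_\alpha\colon\alpha\in\Supp(\gamma)\cap(\Delta\backslash I)\}$; the base case is \Cref{lem:m_alpha}. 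For the inductive step I choose a simple root $\delta\in\Supp(\gamma)$ with $\gamma-\delta$ a root, apply the strengthened \Cref{enne} to the pair $(\gamma-\delta,\delta)$, and combine the inductive bound on $\varphi(\gamma-\delta)$ with $\varphi(\delta)=m_\delta$ (or $\infty$ if $\delta\in I$), using that $\Supp(\gamma-\delta)\cup\{\delta\}=\Supp(\gamma)$. By \Cref{rosa_nero_verde}(b) the resulting equality of numerical functions yields $P=\bigcap_{\alpha\in\Delta\backslash I}Q^\alpha$, which is both the assertion of \Cref{mainnnn} and, since $Q^\alpha=G^{m_\alpha}P^\alpha$, the statement that $P$ is of standard type.

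The main obstacle is not conceptual but lies in verifying carefully that the proof of \Cref{enne} survives the weakening of its hypothesis to $\mathcal{N}(\gamma,\delta)\neq 0$ in $k$: now that $\gamma-\delta$ may be a root, one must check that no other term of the Chevalley commutator contributes to the $U_{-\gamma-\delta}$ component, so that its coefficient is still exactly $c_{11}$. This is the only place where the characteristic genuinely enters, and it is precisely where the argument would break for $G_2$ (edge multiplicity three) or for $B_n,C_n$ in characteristic two.
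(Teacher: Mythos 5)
Your proposal is correct and follows essentially the same route as the paper: both reduce to the inequality $\varphi(\gamma)\geq\min\{m_\alpha\colon\alpha\in\Supp(\gamma)\}$, proved by induction on the height of $\gamma$ via the observation that in types $B_n$ and $C_n$ every structure constant $\mathcal{N}(\gamma-\alpha,\alpha)$ is $\pm1$ or $\pm2$, hence invertible when $p=3$, so the argument of \Cref{enne} goes through unchanged. The verification you flag at the end is immediate, since the Chevalley commutator product ranges over pairs of \emph{positive} integers $(i,j)$ and only $(i,j)=(1,1)$ contributes to the $U_{-\gamma-\delta}$ factor.
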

We follow the reasoning in \cite[Theorem 10]{Wenzel}, and we use the fact that all structure constants appearing in the proof have absolute value strictly smaller than $3$. 

\begin{proof}
    Keeping the above notation, the inclusion (\ref{inclusion0}) becomes
    \[
    P \subseteq \bigcap_{\alpha \in \Delta \backslash I} G^{m_\alpha}P^\alpha,
    \]
    where, by \Cref{rosa_nero_verde}(a) we have
    \[
    m_\alpha = \varphi(\alpha) = \varphi_\alpha(\alpha).
    \]
    It is thus enough to prove the inequality
    \[
    \varphi(\gamma) \geq \min \{m_\alpha \colon \alpha \in \Supp(\gamma)\},
    \]
    by induction on the height of $\gamma$. If $\gamma$ is simple then the statement is true. If not, let $\alpha \in \Supp(\gamma)$ such that $\gamma-\alpha$ is still a root. Then the structure constant
    \[
    \mathcal{N}(\gamma-\alpha,\alpha)
    \]
    is equal to $\pm 1$ or to $\pm 2$; in particular it never vanishes over the base field. This implies, by the same reasoning as the one in the proof of \Cref{enne}, that
    \[
    \varphi(\gamma) \geq \min \{ \varphi(\gamma-\alpha),\varphi(\alpha)\},
    \]
    and we are done by the induction hypothesis.
\end{proof}



\subsection{Type $F_4$}
Consider a group $G$ of type $F_4$ over an algebraically closed field of characteristic $p>0$ (in particular $p=2$ or $p=3$ are the interesting cases for us).\\
Let us adapt all previous notation:
\begin{center}
\begin{tikzpicture}[inner sep=2pt,outer sep=0pt]
\node [circle,fill=cite!80!black,radius=2pt,draw,label=\colorbox{cite!30!white}{$\alpha_1$}] (A) at (0,0) {};
\node [circle,fill=cite!80!black,radius=2pt,draw,label=\colorbox{cite!30!white}{$\alpha_2$}] (B) at (1.5,0) {};
\node (C) at (2.25,0) {};
\node [circle,fill=cite!80!black,radius=2pt,draw,label=\colorbox{cite!30!white}{$\alpha_3$}] (D) at (3,0) {};
\node [circle,fill=cite!80!black,radius=2pt,draw,label=\colorbox{cite!30!white}{$\alpha_4$}] (E) at (4.5,0) {};
\draw[thin,color=cite!80!black] (A) -- (B);
\draw[thin,color=cite!80!black] (D) -- (E);
\draw[-{Classical TikZ Rightarrow[length=1.5mm,]},thin,color=cite!80!black] (1.5,0.02) -- (2.3,0.02);
\draw[thin,color=cite!80!black] (2.3,0.02) -- (3,0.02);
\draw[thin,color=cite!80!black] (2.25,-0.05) -- (3,-0.05);
\draw[thin,color=cite!80!black] (1.5,-0.05)--(2.25,-0.05);
\end{tikzpicture}
\end{center}
is the Dynkin diagram, and we denote as
\[
Q^i = \langle P, P^{\alpha_i} \rangle = (\ker \xi_i)P^{\alpha_i}, \quad \alpha_i \in \Delta \backslash I,
\]
the family of parabolic subgroups with maximal reduced part associated to $P$. Moreover, we call $\varphi$ and $\varphi_i$ the associated functions to $P$ and to $\langle P , P^{\alpha_i}\rangle$ respectively.

\begin{proposition}
    The inclusion (\ref{inclusion0}) is an equality in type $F_4$.
\end{proposition}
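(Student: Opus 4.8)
The plan is to run the same machine as in the previous subsections. By \Cref{rosa_nero_verde}(b), since $P$ and $\bigcap_i Q^i$ have the common reduced part $P_I$, they coincide precisely when their numerical functions agree; and as the inclusion (\ref{inclusion0}) already yields $\varphi(\gamma) \leq \min\{\varphi_i(\gamma) : \alpha_i \in \Supp(\gamma)\}$, the whole proposition reduces to the reverse inequality
\[
\varphi(\gamma) \geq \min\{\varphi_i(\gamma) : \alpha_i \in \Supp(\gamma)\}
\]
for every $\gamma \in \Phi^+ \setminus \Phi_I^+$.

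First I would dispose of the roots $\gamma$ whose support is not all of $\Delta$. Removing a node from the $F_4$ diagram produces a Levi subgroup of type $B_3$ (dropping $\alpha_4$), $C_3$ (dropping $\alpha_1$), or $A_2 \times A_1$ (dropping $\alpha_2$ or $\alpha_3$). For such a $\gamma$ the subgroup $U_{-\gamma}$ lies in the corresponding Levi $L$, so $P \cap L$ is a parabolic subgroup of $L$ and both $\varphi(\gamma)$ and the relevant $\varphi_i(\gamma)$ are computed inside $L$. Since the statement is already known in types $B_3$ and $C_3$, and holds in the simply laced $A_2 \times A_1$ because its parabolics are of standard type, the equality follows for every root of non-full support, exactly as in Steps $1$ and $2$ of the $B_{n+1}$ induction.

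There remain the positive roots with $\Supp(\gamma) = \Delta$, which I would treat by induction on the height of $\gamma$, the base case being the non-full-support roots already handled. For each such $\gamma$ I pick a simple root $\alpha_i \in \Supp(\gamma)$ with $\gamma - \alpha_i \in \Phi^+$ but $\gamma - 2\alpha_i \notin \Phi$; the root string then has length zero, so $\mathcal{N}(\gamma - \alpha_i, \alpha_i) = \pm 1$ is nonzero in every characteristic and \Cref{enne} gives
\[
\varphi(\gamma) \geq \min\{\varphi(\gamma - \alpha_i), \varphi(\alpha_i)\}.
\]
Combining \Cref{rosa_nero_verde}(a) for $\varphi(\alpha_i) = \varphi_i(\alpha_i)$, the induction hypothesis for the lower root $\gamma - \alpha_i$, and \Cref{rosa_nero_verde}(c) to transport values across roots of equal length, this bound rewrites as a minimum of the $\varphi_j(\gamma)$, and whenever the relevant $\xi_j$ are all iterated Frobenius morphisms the conclusion is immediate. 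As in the $B_2$ case I would also pass through the pull-back $\overline{P}$ along the self-dual very special isogeny $\overline{\pi}$ of $F_4$, which exchanges long and short roots, in order to reach those roots for which a convenient decomposition lives most naturally on the dual side.

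The main obstacle is the same as in Steps $1$ and $2$ of \Cref{initialisation}: it arises when one of the isogenies factors as $\xi_j = F^m \circ \pi$ through the very special isogeny, so that by \Cref{rosa_nero_verde}(c) the value of $\varphi_j$ jumps by one between long and short roots and the estimate from \Cref{enne} undershoots $\min_j \varphi_j(\gamma)$ by exactly one. In each such case I would assume the strict inequality $\varphi(\gamma) < \min_j \varphi_j(\gamma)$ and deduce that $P$ is contained in $(\ker \zeta) P^{\alpha_j}$ for an isogeny $\zeta$ with $\ker \zeta \subsetneq \ker \xi_j$, contradicting the minimality of $\xi_j$ recorded in \Cref{minimality}. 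Carrying this through amounts to a finite, explicit check over the full-support roots of $F_4$ --- selecting for each a decomposition whose structure constant is $\pm 1$, and verifying that the contradiction closes --- and this bookkeeping, together with the separate handling forced by the vanishing of the length-two and length-three structure constants in characteristics two and three, is where essentially all the work lies.
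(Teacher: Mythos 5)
Your global architecture (reduce to the numerical inequality via \Cref{rosa_nero_verde}(b), dispatch the roots of non-full support through the Levi subgroups of type $B_3$, $C_3$ and $A_1\times A_2$, then fight the full-support roots with structure constants and a minimality contradiction against \Cref{minimality}) matches the paper's Steps 1 and 3. But the inductive step you propose for full-support roots has a concrete failure: you claim that for each such $\gamma$ one can pick a simple root $\alpha_i$ with $\gamma-\alpha_i\in\Phi^+$ and $\gamma-2\alpha_i\notin\Phi$. This is false for the long root $\varepsilon_1+\varepsilon_4=\alpha_1+2\alpha_2+4\alpha_3+2\alpha_4$: the only simple root one can subtract and stay in $\Phi$ is $\alpha_3$, giving $\varepsilon_1$, and since $\varepsilon_1-\varepsilon_4$ is again a root one has $\mathcal{N}(\varepsilon_1,\alpha_3)=\pm 2$, which vanishes in characteristic $2$. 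The same obstruction then blocks the descent chains of $\varepsilon_1+\varepsilon_3$ and of the highest root $\varepsilon_1+\varepsilon_2$, whose only simple-root predecessors lead back to $\varepsilon_1+\varepsilon_4$. So three of the five full-support long roots cannot be reached by your recipe, precisely in the characteristic where the theorem is at stake.

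The paper sidesteps this entirely for long roots by a different idea: the root subgroups attached to long roots generate a subgroup of type $D_4$ (\Cref{D4}), which is simply laced, so Wenzel's result applies inside it and reduces everything to the $\beta_i$'s, which have non-full support in $F_4$. If you want to stay with \Cref{enne} you must allow decompositions into two non-simple roots, e.g.\ $\varepsilon_1+\varepsilon_4=(\varepsilon_1-\varepsilon_3)+(\varepsilon_3+\varepsilon_4)$, whose structure constant is $\pm1$ and whose summands are long roots of non-full support or lower height; your passing mention of working on the dual side via $\overline{\pi}$ could also be made to work but is not carried out, and it reintroduces the $\pm1$ shifts between $\psi$ and $\varphi$ that you would then have to control. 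For the five short full-support roots your plan is essentially the paper's Step 3, including the minimality contradiction when $\xi_1$ or $\xi_2$ factors through the very special isogeny; note that closing that contradiction still requires verifying the bound $\varphi\le m-1$ on \emph{all} short roots containing $\alpha_1$ or $\alpha_2$ in their support, which is additional explicit work you have correctly flagged but not done.
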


\begin{proof}
    By \Cref{rosa_nero_verde}(b), it is enough to show that
    \begin{align}
        \label{cinque}
        \varphi(\gamma) \geq \min_i \{ \varphi_i(\gamma) \}, \quad \text{for all } \gamma \in \Phi^+\backslash \Phi^+_I.
    \end{align}
\textbf{Step 1:} Let us assume that the support of $\gamma$ is not equal to the whole of $\Delta$. Then $U_{-\gamma}$ is contained in a Levi subgroup of the form
\[
L \defeq P^{\alpha_i} \cap (P^{\alpha_i})^-
\]
for some $i$. In particular, $L$ has root system of type $C_3$ (if $\alpha_1$ is not in $\Supp(\gamma)$), of type $B_3$ (if $\alpha_4$ is not in  $\Supp(\gamma)$) or of type $A_1 \times A_2$ (if one among $\alpha_2$ and $\alpha_3$ is not in $\Supp(\gamma)$). In any of those situations, the inequality (\ref{cinque}) holds for $\gamma$, because 
\[
U_{-\gamma} \cap P = U_{-\gamma} \cap L\]
have same height, and because any parabolic subgroup in type $C_3$, $B_3$ and $A_1 \times A_2$ is intersection of parabolic subgroups with maximal reduced part.\\\\
\textbf{Step 2:} Let us consider a long root $\gamma$ whose support is equal to $\Delta$. By \Cref{D4} proven below, the subgroup $H$ generated by long root subgroups is of type $D_4$. Let us consider the following basis for the root system of $H$, as in (\ref{beta_i}):
\begin{align*}
& \beta_1 \defeq \alpha_2+2\alpha_3+\alpha_4 = \varepsilon_1-\varepsilon_2, \quad \quad \beta_2 \defeq \alpha_1 = \varepsilon_2-\varepsilon_3,\\
& \beta_3 \defeq \alpha_2 = \varepsilon_3-\varepsilon_4, \qquad \qquad \qquad \quad \beta_4 \defeq \alpha_2+2\alpha_3 = \varepsilon_3+\varepsilon_4.
\end{align*}
Since $\gamma$ is a long root satisfying
\[
U_{-\gamma} \cap P = U_{-\gamma} \cap H,
\]
we can use the fact that the group of type $D_4$ is simply laced to get
\begin{align}
\label{diciotto}
\varphi(\gamma) = \min_i \{ \varphi(\beta_i)\}.
\end{align}
Next, let us consider again the $\beta_i$s as being roots of $G$; notice that their support is not equal to the whole of $\Delta$. This allows us to apply Step 1 to $\beta_i$, to obtain
\[
\varphi(\beta_i) = \min_j \{\varphi_j(\beta_i) \colon \alpha_j \in \Supp (\beta_i)\}, \quad 1\leq i \leq 4.
\]
In conclusion, (\ref{diciotto}) becomes
\[
\varphi(\gamma ) = \min_{i,j} \{ \varphi_j(\beta_i) \colon \alpha_j \in \Supp (\beta_i) \} = \min_j \{ \varphi_j(\gamma) \} 
\]
where the last equality is due to \Cref{rosa_nero_verde}(c); in particular, (\ref{cinque}) holds for $\gamma$.\\\\
\textbf{Step 3:} Finally, we are led to consider short roots whose support is equal to the whole of $\Delta$. There are five of them, namely:
\begin{align*}
    \delta_1 & = \alpha_1+\alpha_2+\alpha_3+\alpha_4 = \delta_2-\alpha_3,\\
    \delta_2 & = \alpha_1+\alpha_2+2\alpha_3+\alpha_4=\delta_3-\alpha_2,\\
    \delta_3 & = \alpha_1+2\alpha_2+2\alpha_3+\alpha_4 = \delta_4-\alpha_3,\\
    \delta_4 & =\alpha_2+2\alpha_2+3\alpha_3+\alpha_4 = \delta_5-\alpha_4,\\
    \delta_5 & = \alpha_1+2\alpha_2+3\alpha_3+2\alpha_4 = \delta_1+ (\alpha_2+2\alpha_3+\alpha_4).
\end{align*}
Let us recall that we set the function $\varphi$ to be constant and equal to infinity on negative roots. Moreover, we have 
\begin{align*}
     & \mathcal{N}(\delta_2,-\alpha_3) = \pm 1, & \text{because $\delta_2+\alpha_3$ is not a root;}\\
    & \mathcal{N}(\delta_3,-\alpha_2) = \pm 1, & \text{because $\delta_3+\alpha_2$ is not a root;}\\
    & \mathcal{N}(\delta_4,-\alpha_3) = \pm 1, & \text{because $\delta_4+\alpha_3$ is not a root;}\\
    & \mathcal{N}(\delta_5,-\alpha_4) = \pm 1, & \text{because $\delta_5+\alpha_4$ is not a root;}\\
    & \mathcal{N}(\delta_1, \nu) = \pm 1, \text{where } \nu = \alpha_2+2\alpha_3+\alpha_4, & \text{because $\delta_1+\nu$ is not a root.}
\end{align*}
The structure constants just above imply, by \Cref{enne}, that
\begin{align}
    \label{sei}
    \varphi(\delta_1) \geq \varphi(\delta_2) \geq \varphi(\delta_3) \geq \varphi(\delta_4) \geq \varphi(\delta_5) \geq \min \{ \varphi(\delta_1),\varphi(\nu)\}.
\end{align}
We can now apply Step 1 to the root $\nu$, because its support is not the whole of $\Delta$, as well as \Cref{rosa_nero_verde}(c), to obtain
\[\varphi_i(\nu) = \varphi_i(\delta_1)\]
for all $i$. In particular,
\begin{align*}
 \min \{ \varphi(\delta_1),\varphi(\nu)\} & = \min \{ \varphi(\delta_1), \varphi_2(\nu),\varphi_3(\nu), \varphi_4(\nu)\}\\ 
 & = \min \{ \varphi(\delta_1), \varphi_2(\delta_1),\varphi_3(\delta_1), \varphi_4(\delta_1) \} = \varphi(\delta_1).
\end{align*}
Together with (\ref{sei}), we can deduce that
\[
\varphi(\delta_1) = \varphi(\delta_2) = \varphi(\delta_3) =\varphi(\delta_4) = \varphi(\delta_5),
\]
so that
 it is enough to show
\begin{align}
\label{sette}
\varphi(\delta_1) \geq \min_i \{ \varphi_i(\delta_1) \} =: m.
\end{align}
Let us prove (\ref{sette}): first, let us notice that we have
\begin{align*}
    \varphi(\delta_1) & \geq  \min \{ \varphi(\alpha_1+\alpha_2+\alpha_3), \varphi(\alpha_4) \}& \text{because } \mathcal{N}(\alpha_1+\alpha_2+\alpha_3,\alpha_4) = \pm 1 ; \\
     & \geq \min \{ \varphi(\alpha_1+\alpha_2),\varphi(\alpha_3),\varphi(\alpha_4)\} & \text{because } \mathcal{N}(\alpha_1+\alpha_2,\alpha_3)= \pm 1 ;\\
      & \geq \min \{ \varphi(\alpha_1),\varphi(\alpha_2),\varphi(\alpha_3),\varphi(\alpha_4)\} & \text{because } \mathcal{N}(\alpha_1,\alpha_2) = \pm 1 ;\\
       & = \min_i \{ \varphi_i(\alpha_i) \} & \text{by \Cref{rosa_nero_verde}(a)};\\
        & = \min_i\{ \varphi_1(\alpha_1),\varphi_2(\alpha_2),\varphi_3(\delta_1), \varphi_4(\delta_1)\} & \text{by \Cref{rosa_nero_verde}(c)}.
\end{align*}
If the minimum just above is realised by $\varphi_3(\delta_1)$ or $\varphi_4(\delta_1)$, then (\ref{sette}) holds and we are done. If the minimum is realised by $\varphi_1(\alpha_1)$ and $\xi_1$ is an $m$-th iterated Frobenius, then we have
\[
m = \varphi_1(\alpha_1) = \varphi_1(\delta_1)
\]
and we are also done; analogously for $\varphi_2(\alpha_2)$.\\
We are left with the following cases, for which (\ref{sette}) becomes a strict inequality:
\begin{align*}
    & (a) \, \colon m = \varphi_2(\delta_1) > \varphi_2(\alpha_2) = m-1, \quad \text{i.e. } 
    \xi_2 = F^{m-1} \circ \overline{\pi}, \text{ or }\\
    & (b) \, \colon m = \varphi_1(\delta_1) > \varphi_1(\alpha_1) = m-1, \quad \text{i.e. } 
    \xi_1 = F^{m-1} \circ \overline{\pi}.
\end{align*}
Let us assume we are in one of these two situations. We want to get a contradiction with the assumption (respectively):
\begin{align*}
    & \text{in } (a), \, m-1= \varphi(\delta_1) < \varphi_2(\delta_1) =m;\\
    & \text{in }(b), \, m-1= \varphi(\delta_1) < \varphi_1(\delta_1) =m.
\end{align*}
We now claim that in both situations $(a)$ and $(b)$ all positive short roots $\gamma$ containing $\alpha_2$ in their support satisfy 
\begin{align}
\label{dieci}
\varphi(\gamma) \leq m-1.
\end{align}
Notice that in the root system of type $F_4$, all short roots containing $\alpha_1$ in their support also contain $\alpha_2$, so that this reasoning works for both $(a)$ and $(b)$.\\
The inequality (\ref{dieci}) is true for $\delta_1\ldots \delta_5$ by (\ref{sei}), hence we can consider $\gamma$ such that its support is not equal to $\Delta$. These roots are
\begin{align*}
    \gamma_1 & = \alpha_1+\alpha_2+\alpha_3, \, \text{satisfying } \varphi(\gamma_1) \leq \varphi(\delta_1) & \text{because } \mathcal{N}(\alpha_1+\alpha_2+\alpha_3, \alpha_4) = \pm 1;\\
     \gamma_2 & = \alpha_2+\alpha_3, \, \text{satisfying } \varphi(\gamma_2) \leq \varphi(\gamma_1) & \text{because } \mathcal{N}(\alpha_2+\alpha_3, \alpha_1) = \pm 1;\\
    \gamma_3 & = \alpha_2+\alpha_3+\alpha_4, \, \text{satisfying } \varphi(\gamma_3) \leq \varphi(\delta_1) & \text{because } \mathcal{N}(\alpha_2+\alpha_3+\alpha_4, \alpha_1) = \pm 1;\\
     \gamma_4 & = \alpha_1+2\alpha_2+\alpha_3, \, \text{satisfying } \varphi(\gamma_4) \leq \varphi(\gamma_3) & \text{because } \mathcal{N}(\alpha_2+2\alpha_3+\alpha_4, -\alpha_3) = \pm 1.
\end{align*}
In particular, this yields, using \Cref{enne} at each step, that
\begin{align*}
    & P \subseteq G^{m-1} P^{\alpha_2} \quad \text{in case } (a),\\
    & P \subseteq G^{m-1} P^{\alpha_1} \quad \text{in case } (b).
\end{align*}
This would imply that the kernel of $\xi_2$ (resp. of $\xi_1$) is contained in $G^{m-1}$, which contradicts the assumption $(a)$ (resp. the assumption $(b)$).
Hence (\ref{sette}) holds and we are done.
\end{proof}

\begin{lemma}
\label{D4}
    Let $G$ be of type $F_4$. Then the root subgroups associated to long roots of $G$ generate a reductive subgroup of type $D_4$.
\end{lemma}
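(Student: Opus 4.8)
The plan is to work in the standard realization of the root system of $F_4$ inside $\R^4$ (Bourbaki conventions), where the $48$ roots split into the $24$ \emph{long} roots $\pm\varepsilon_i\pm\varepsilon_j$ (for $1\le i<j\le 4$) and the $24$ \emph{short} roots $\pm\varepsilon_i$ and $\tfrac{1}{2}(\pm\varepsilon_1\pm\varepsilon_2\pm\varepsilon_3\pm\varepsilon_4)$. The first observation is purely combinatorial: the set $\Phi_{\mathrm{long}}=\{\pm\varepsilon_i\pm\varepsilon_j\}$ is precisely the root system of type $D_4$. A basis is given by the four roots $\beta_1,\dots,\beta_4$ of (\ref{beta_i}); computing the Cartan pairings shows that $\beta_1,\beta_3,\beta_4$ are pairwise orthogonal while $\beta_2$ is orthogonal to none of them, so their Dynkin diagram is exactly the fork $D_4$.

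The key step is to show that $\Phi_{\mathrm{long}}$ is a \emph{closed} (and obviously symmetric) subsystem of $\Phi$, that is, whenever $\gamma,\delta$ are long roots and $\gamma+\delta$ is a root, then $\gamma+\delta$ is again long. I would argue on coordinates: a sum of two vectors of the form $\pm\varepsilon_i\pm\varepsilon_j$ has integer entries, hence cannot be one of the half-integer short roots; and it cannot equal a short root $\pm\varepsilon_m$ either, as a short case analysis on the overlap of the index pairs $\{i,j\}$ shows. Indeed, sharing both indices gives entries in $\{0,\pm 2\}$, sharing exactly one index forces at least two nonzero entries (and a $\pm 2$ unless the shared contributions cancel, in which case the result is long), and sharing no index gives four nonzero entries. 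Thus a root of the form $\gamma+\delta$ has exactly two nonzero entries equal to $\pm 1$, and is long.

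Once closedness is established, I would conclude with the standard structure theory of subsystem subgroups, valid over any field: the subgroup $H\defeq\langle U_\gamma:\gamma\text{ long}\rangle$ is generated by the root subgroups of a symmetric closed subsystem, hence is connected reductive with root system $\Phi_{\mathrm{long}}$. Since the coroots $\gamma^\vee$ of the long roots span a rank-$4$ sublattice of $X_*(T)$, the subtorus generated by the images $\gamma^\vee(\Gm)$ is a maximal torus, so $H$ has semisimple rank $4$ and is reductive of type $D_4$. The only genuinely delicate point is the invocation of this last theorem in characteristics $2$ and $3$: the Chevalley commutator relations guarantee that no root direction outside $\Phi_{\mathrm{long}}$ is produced even when certain structure constants vanish over $k$, precisely because $\Phi_{\mathrm{long}}$ is closed. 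I expect this to be the main thing to justify carefully (or to cite), the remaining verifications being routine.
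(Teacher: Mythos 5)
Your argument is correct, but it follows a genuinely different route from the paper. You identify $\Phi_{\mathrm{long}}=\{\pm\varepsilon_i\pm\varepsilon_j\}$ as a symmetric \emph{closed} subsystem of type $D_4$ (your coordinate case analysis for closedness is right, and the Cartan pairings of $\beta_1,\dots,\beta_4$ do give the $D_4$ fork), and then invoke the general theorem on subsystem subgroups: the subgroup generated by the root subgroups of a symmetric closed subsystem is reductive with that root system, in any characteristic. This is a legitimate, characteristic-free statement (SGA3-type, or Borel--de Siebenthal), and you correctly isolate the one point that needs care, namely that closedness of $\Phi_{\mathrm{long}}$ under the Chevalley commutator relations is what prevents any root direction outside the subsystem from appearing even when structure constants vanish in characteristic $2$ or $3$; note that closedness under single sums does imply closedness under all the terms $i\gamma+j\delta$ occurring in the commutator formula, by walking along root strings. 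The paper instead avoids citing this machinery and constructs the subgroup explicitly as an iterated centralizer: the sublattice of $X(T)$ generated by the $B_4$-subsystem $\{\pm\varepsilon_i,\pm\varepsilon_i\pm\varepsilon_j\}$ has index $2$, giving a copy of $\mu_2$ in $T$ whose centralizer has identity component $K=\Spin_9$; repeating the construction inside $K$ yields $H=\Spin_8$ whose roots are exactly the long roots. The ingredients there are that centralizers of linearly reductive subgroups are smooth and an elementary Lie-algebra argument for reductivity (\Cref{reductive_centralizer}). What each approach buys: yours is shorter and more conceptual once the subsystem-subgroup theorem is cited, and makes the closedness of $\Phi_{\mathrm{long}}$ the visible reason the statement is characteristic-free; the paper's is self-contained, identifies $H$ concretely as $\Spin_8\subset\Spin_9\subset G$ (hence in particular pins down the isogeny type, which the subsystem argument does not immediately give), and only uses facts about diagonalizable group schemes already needed elsewhere in the text. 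One minor point to tighten in your write-up: the subgroup generated by the $U_\gamma$ alone (without $T$) is the derived group of the usual subsystem subgroup $\langle T, U_\gamma\rangle$; your observation that the long coroots span a rank-$4$ sublattice of $X_*(T)$ does close this gap, since it shows the two subgroups in fact coincide and contain $T$.
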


\begin{proof}
    We proceed in two consecutive steps: the first one realises an embedding of $K = \Spin_9$ into $G$, while the second one an embedding of $H = \Spin_8$ into $K$, such that the root system of $H$ consists exactly of all the long roots of $G$.\\
    \textbf{Step 1:} Keeping the notation for root systems of \cite{Bourbaki}, we have that
    \begin{align}
        \label{B4}
        \pm \varepsilon_i, \quad \pm \varepsilon_i \pm \varepsilon_j, \, i \neq j
    \end{align}
    form a root subsystem of $\Phi$ of type $B_4$, with basis
    \[
     \nu_1 \defeq \varepsilon_1-\varepsilon_2 = \alpha_2+2\alpha_3+2\alpha_4, \quad \nu_2 \defeq \varepsilon_2-\varepsilon_3 = \alpha_1,\quad
        \nu_3 \defeq \varepsilon_3-\varepsilon_4 = \alpha_2,  \quad \nu_4 \defeq \varepsilon_4 = \alpha_3.
    \]
    The subgroup generated by these roots in the character lattice of the maximal torus $T$ of $G$ is
    \[
    R \defeq \langle \nu_1,\nu_2,\nu_3,\nu_4\rangle = \langle \alpha_1,\alpha_2,\alpha_3,2\alpha_4\rangle \subset X(T) 
    \]
    which has index $2$. The quotient map
    \[
    X(T) \longrightarrow \Z /2\Z 
    \]
    corresponds to an injection $M \subset T$, where $M$ is a copy of $\mu_2$.\\
    Let $K$ be the connected component of the identity of the centralizer of $M$ in $G$: by \Cref{reductive_centralizer} below, $K$ is smooth and reductive. Its Lie algebra satisfies
    \[
    \Lie K = \Lie C_G(M) = (\Lie G)^M = \Lie T \oplus \{ \mathfrak{g}_\gamma \, \colon \, tXt^{-1} = X \text{ for all } X \in \mathfrak{g}_\gamma, \, t \in M \}.
    \]
    Let us consider some $t \in M$ and $X \in \mathfrak{g}_\gamma$; by definition of root subspaces, we have 
    \[tXt^{-1} = \gamma(t) X.
    \]
    Moreover, by construction of $M$ we have $\gamma(t) \in \mu_2$ and $\gamma(M) = 1$ if and only if the coefficient of $\alpha_4$ (in the unique expression of $\gamma$ as linear combination of simple roots with integer coefficients of the same sign) is even. This means exactly that $\mathfrak{g}_\gamma$ is contained in $\Lie K$ if and only if $\gamma$ is one of the roots in (\ref{B4}). Finally, we can conclude that $K$ is simply connected of type $B_4$, with the desired set of roots and with maximal torus 
    \[
    T^\prime \defeq (T \cap K )^0_{\text{red}}.
    \]
    \textbf{Step 2:} The second step follows by the exact same reasoning, by considering 
    \[
    \pm \varepsilon_i \pm \varepsilon_j, \, i \neq j
    \]
    as a root subsystem of type $D_4$, with basis
    \begin{align}
    \label{beta_i}
    \beta_1 \defeq \nu_1, \quad \beta_2 \defeq \nu_2, \quad \beta_3 \defeq \nu_3, \quad \beta_4 \defeq \varepsilon_3+\varepsilon_4 = \nu_3+2\nu_4.
    \end{align}
    These roots generate
    \[
    R^\prime \defeq \langle \beta_1,\beta_2,\beta_3,\beta_4 \rangle = \langle \nu_1,\nu_2,\nu_3, 2\nu_4 \rangle \subset R
    \]
    as a subgroup of index $2$. The corresponding quotient map
    \[
    X(T^\prime ) = R \longrightarrow \Z/2\Z
    \]
    corresponds to an injection $M^\prime \subset T^\prime$, where $M^\prime$ is a copy of $\mu_2$. 
    Thus, we get 
    \[
    H \defeq C_K(M^\prime)^0
    \]
    as a copy of $\Spin_8$ inside of $\Spin_9= K \subset G$, having as roots exactly the long roots of $\Lie G$, and we are done.
\end{proof}

\begin{lemma}
\label{reductive_centralizer}
    Let $M \subset T \subset G$ with $G$ reductive and $T$ a maximal torus of $G$.\\
    Then the identity component of the centralizer $Z \defeq C_G(M)^0$ is smooth and reductive.
\end{lemma}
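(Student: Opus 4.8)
The plan is to treat the three assertions in turn --- smoothness, the identification of a maximal torus with its roots, and reductivity --- the smoothness being the only genuinely characteristic-$p$ difficulty.

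First I would establish smoothness. Since $M$ is contained in the torus $T$, it is a group of multiplicative type, hence linearly reductive: each of its representations is completely reducible. The centralizer $C_G(M)$ is exactly the fixed-point scheme $G^M$ for the conjugation action of $M$ on the smooth affine group $G$, and the fixed-point scheme of a multiplicative-type group acting on a smooth affine scheme is again smooth (see \cite[Proposition A.8.10]{CGP15}). Passing to the identity component preserves smoothness, so $Z = C_G(M)^0$ is smooth.

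Next I would pin down a maximal torus and the roots of $Z$. As $M$ lies in the commutative group $T$, we have $T \subseteq C_G(M)$ and therefore $T \subseteq Z$; since $T$ is already maximal in $G$, it is a maximal torus of $Z$. Smoothness then lets me read the roots off the Lie algebra: the adjoint action of $M$ fixes $\Lie T$ and acts on $\mathfrak{g}_\gamma$ through the character $\gamma|_M$, so
\[
\Lie Z = (\Lie G)^M = \Lie T \oplus \bigoplus_{\gamma \in \Psi} \mathfrak{g}_\gamma, \qquad \Psi \defeq \{ \gamma \in \Phi \colon \gamma|_M = 1 \}.
\]
Because $Z$ is smooth and contains $T$, each weight space $\mathfrak{g}_\gamma$ with $\gamma \in \Psi$ integrates to the root subgroup $U_\gamma \subseteq Z$, and $\Psi$ is precisely the set of roots of the pair $(Z,T)$.

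Finally, reductivity would follow from the symmetry of $\Psi$. The defining condition $\gamma|_M = 1$ is stable under $\gamma \mapsto -\gamma$, so $\Psi = -\Psi$. If the unipotent radical $R_u(Z)$ were nontrivial, then being connected, unipotent and normalized by $T$ its Lie algebra would be a sum of root spaces; hence some $U_\gamma$ with $\gamma \in \Psi$ would lie in $R_u(Z)$. But $-\gamma \in \Psi$ forces $U_{-\gamma} \subseteq Z$ as well, and $U_\gamma, U_{-\gamma}$ generate a subgroup isogenous to $\SL_2$, which is semisimple and cannot be contained in the unipotent radical. Thus $R_u(Z) = 1$ and $Z$ is reductive. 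The hard part is the smoothness step: scheme-theoretic centralizers are generally non-smooth in positive characteristic, and it is exactly the linear reductivity of the multiplicative-type group $M$ that rescues us; once smoothness is secured, identifying $T$ as a maximal torus with symmetric root system $\Psi$ and concluding reductivity is formal.
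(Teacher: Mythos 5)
Your proof is correct and follows the same skeleton as the paper's: smoothness is deduced from the linear reductivity of the multiplicative-type group $M$ (the paper dismisses this as a ``general fact''; you cite the precise fixed-point statement), and reductivity is obtained by noting that $(\Lie G)^M$ is stable under $\gamma \mapsto -\gamma$ on root spaces and deriving a contradiction from a hypothetical nontrivial unipotent radical. The one divergence is the final contradiction. The paper stays at the Lie-algebra level: $\Lie R_u(Z)$ is a $p$-Lie ideal of $\Lie Z$, hence would contain $[\mathfrak{g}_\gamma,\mathfrak{g}_{-\gamma}]$, a nonzero element of $\Lie T$, which is absurd. You instead integrate: both $U_\gamma$ and $U_{-\gamma}$ lie in $Z$, and the subgroup they generate is isogenous to $\SL_2$, so it cannot end up inside the unipotent radical. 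Your group-level variant is marginally more robust, since it does not rely on $[X_\gamma,X_{-\gamma}]=H_\gamma$ being nonzero modulo $p$ (true in the paper's applications because the groups there are simply connected, so coroots are indivisible in the cocharacter lattice, but a point left implicit). Two small things to tighten: in the last step it is the normality of $R_u(Z)$ in $Z$ that forces the \emph{whole} quasi-simple subgroup $\langle U_\gamma,U_{-\gamma}\rangle$ --- not just $U_\gamma$ --- into $R_u(Z)$, whence the contradiction; and the claim that a $T$-normalized smooth connected unipotent subgroup is directly spanned by the root subgroups it contains is the standard fact you are invoking to produce the $U_\gamma$ inside $R_u(Z)$ in the first place.
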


Let us mention that \Cref{reductive_centralizer} is a particular case of \cite[Proposition A.8.12]{CGP15}. We provide a direct, elementary proof below.

\begin{proof}
Existence and smoothness are general facts since $M$ is linearly reductive.\\
    Next, let us assume that $Z$ is not reductive: since $Z$ is contained in $T$, there is some root $\gamma$ of $\Lie G$ such that 
    \[
    \mathfrak{g}_\gamma \subset \Lie U, \quad \text{where } U \defeq R_u(Z).
    \]
Since $M$ is linearly reductive, we have that $\Lie Z$ is the fixed point subalgebra $(\Lie G)^M$. In particular, if the root subspace associated to $\gamma$ is contained in $\Lie Z$, then the same holds for $-\gamma$. Moreover, $U$ being normal in $Z$ implies that $\Lie U$ is a $p$-Lie ideal of $\Lie Z$, hence
\[
[\mathfrak{g}_\gamma,\mathfrak{g}_{-\gamma}] \subset \Lie U,
\]
which is absurd, because the above bracket is nonzero and contained in the Lie algebra of the maximal torus $T$.
\end{proof}

\subsection{Type $G_2$}
Let us consider a group of type $G_2$: we begin with the case of characteristic $3$ because the edge hypothesis is satisfied, and we once again get that all parabolic subgroups are of quasi-standard type. Then we move on to characteristic $2$, where a more exotic behavior takes place.

\subsubsection{Characteristic three}

Let $G$ be of type $G_2$ in characteristic $3$. 

\begin{proposition}
    Let $P \subset G$ be a parabolic subgroup with \[
    P_{\text{red}}= B = P^{\alpha_1}\cap P^{\alpha_2}.
    \]
    Then $P$ is the pull-back of a parabolic subgroup of \emph{standard type} by an isogeny with no central factor; in particular, (\ref{inclusion0}) is an equality.
\end{proposition}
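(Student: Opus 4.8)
The plan is to establish the numerical identity $\varphi=\min_i\varphi_i$, which by \Cref{rosa_nero_verde}(b) is exactly the equality in (\ref{inclusion0}), and only afterwards to read off the structural description. Since $P\subseteq Q^i$ already forces $\varphi(\gamma)\le\min_i\varphi_i(\gamma)$ for every positive root $\gamma$, it suffices to prove the reverse inequality on the six positive roots of $G_2$. Writing $\alpha_1$ for the short and $\alpha_2$ for the long simple root, on $\alpha_1,\alpha_2$ the inequality is the equality of \Cref{lem:m_alpha} (see \Cref{rosa_nero_verde}(a)), so only the four roots $\alpha_1+\alpha_2$, $2\alpha_1+\alpha_2$, $3\alpha_1+\alpha_2$, $3\alpha_1+2\alpha_2$ need attention. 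Inspecting their decompositions into sums of positive roots singles out one genuinely hard case.

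First I would dispose of the roots reachable through a structure constant that survives in characteristic three. One has $\mathcal N(\alpha_1,\alpha_2)=\pm1$, $\mathcal N(\alpha_1+\alpha_2,\alpha_1)=\pm2$ and $\mathcal N(3\alpha_1+\alpha_2,\alpha_2)=\pm1$, none of which vanishes modulo $3$; the commutator argument of \Cref{enne} (valid for any constant that is nonzero modulo $p$, as already used for types $B_n,C_n$) then yields
\[
\varphi(\alpha_1+\alpha_2)\ge\min\{\varphi(\alpha_1),\varphi(\alpha_2)\},\qquad \varphi(2\alpha_1+\alpha_2)\ge\min\{\varphi(\alpha_1+\alpha_2),\varphi(\alpha_1)\},
\]
and $\varphi(3\alpha_1+2\alpha_2)\ge\min\{\varphi(3\alpha_1+\alpha_2),\varphi(\alpha_2)\}$. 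Combined with \Cref{rosa_nero_verde}(a) and (c), these reduce the entire statement to the single long root $3\alpha_1+\alpha_2$.

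The obstacle is precisely $3\alpha_1+\alpha_2$: its only expression as a sum of positive roots is $(2\alpha_1+\alpha_2)+\alpha_1$, whose structure constant $\mathcal N(2\alpha_1+\alpha_2,\alpha_1)=\pm3$ vanishes in characteristic three, so \Cref{enne} is powerless. To circumvent this I would imitate Step 2 of \Cref{initialisation} and pass to the pull-back $\overline P=\overline\pi^{-1}(P)\subseteq\overline G$ along the very special isogeny $\overline\pi\colon\overline G\to G$. Under $\overline\pi$ the long root $3\alpha_1+\alpha_2$ corresponds to the short root $\overline\alpha_1+\overline\alpha_2$ of $\overline G$, for which $\mathcal N(\overline\alpha_1,\overline\alpha_2)=\pm1$; hence \Cref{enne} applies in $\overline G$ and bounds $\psi(\overline\alpha_1+\overline\alpha_2)$ below by $\min\{\psi(\overline\alpha_1),\psi(\overline\alpha_2)\}$. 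Transporting heights back through $\overline\pi$, which shifts by one on the roots that are short in $\overline G$ (equivalently long in $G$), turns this into a lower bound for $\varphi(3\alpha_1+\alpha_2)$. As in \Cref{initialisation}, this bound is sharp exactly when the relevant isogeny carries a very special factor; in the complementary case I would argue by contradiction, deducing from a too-small value of $\varphi(3\alpha_1+\alpha_2)$ that $P$ lies in a proper Frobenius thickening $G^mP^{\alpha_i}$, against \Cref{kerphi} and \Cref{minimality}. This completes the equality in (\ref{inclusion0}).

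It remains to upgrade this to the structural claim. In characteristic three the edge hypothesis holds, so by \Cref{factorisation} and \Cref{kerphi} each $\xi_i$ is either an iterated Frobenius (kernel $G^{m}$) or a very special composite (kernel $N^{m}$). If all the $\xi_i$ are iterated Frobenius maps then $P$ is already of standard type and one takes $\zeta=\id$. Otherwise the same height computation shows that $\varphi(\gamma)$ is positive for every short root $\gamma$, so the very special kernel $N$ is contained in $P$; then $P=\pi^{-1}(P')$ with $P'=P/N$ a parabolic of $\overline G$ whose associated function has strictly smaller total height, and an induction on $\sum_{\gamma\in\Phi^+}\varphi(\gamma)$ presents $P'$ as $\zeta'^{-1}$ of a standard parabolic, whence $P=(\zeta'\circ\pi)^{-1}$ of a standard parabolic with $\zeta'\circ\pi$ of no central factor. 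Finally, because pulling back along an isogeny commutes with intersection and with the formation of $\langle-,P^{\alpha}\rangle$, the equality (\ref{inclusion0}) for the standard parabolic transports to $P$. The crux of the whole argument is the single long root $3\alpha_1+\alpha_2$ with vanishing structure constant; everything else is the two-case bookkeeping for $\xi_1$ and $\xi_2$ already rehearsed in the $B_2$ initialisation.
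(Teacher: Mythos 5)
Your strategy is genuinely different from the paper's. You prove the numerical identity $\varphi=\min_i\varphi_i$ root by root in the spirit of \Cref{initialisation}, using the structure constants $\pm1,\pm2$ (nonzero modulo $3$) and circumventing the one obstructed root $3\alpha_1+\alpha_2$, where $\mathcal{N}(2\alpha_1+\alpha_2,\alpha_1)=\pm3$ vanishes, by passing to $\overline{\pi}^{-1}(P)$; this correctly isolates the crux and can be pushed through, although the claim that everything reduces ``to the single root $3\alpha_1+\alpha_2$'' understates the work: for $\alpha_1+\alpha_2$, $2\alpha_1+\alpha_2$ and $3\alpha_1+2\alpha_2$ the lower bound $\min\{\varphi(\alpha_1),\varphi(\alpha_2)\}$ still falls short of the target by one whenever the relevant $\xi_i$ carries a very special factor, so each of these roots needs the same two-case discussion and contradiction argument, not only the long root. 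The paper never computes the $Q^i$ at all: it divides out the largest kernel of an isogeny with no central factor contained in $P$ and shows by direct bracket computations that what remains is literally $G^lP^{\alpha_1}\cap G^sP^{\alpha_2}$ with $\min(l,s)=0$.

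The genuine gap is in your final structural step. The dichotomy ``either every $\xi_i$ is an iterated Frobenius, in which case $P$ is standard, or else $\varphi>0$ on all short roots and hence $N\subseteq P$'' is false: a very special factor in some $\xi_i$ only gives $\varphi_i(\gamma)\geq1$ on short roots, not $\varphi(\gamma)=\min_j\varphi_j(\gamma)\geq1$. Concretely, for $P=G^{1}P^{\alpha_1}\cap P^{\alpha_2}$ one has $U_{-\alpha_1-\alpha_2}\cap P=1$, so $N\not\subseteq P$, and yet $Q^{\alpha_1}=N P^{\alpha_1}$ by \Cref{Qforstandard}(2) (every positive long root containing $\alpha_1$ in its support also contains $\alpha_2$, whose height is $0$); thus $\xi_1$ is not an iterated Frobenius and your induction cannot get started. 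In this example $P$ happens to be standard, but showing that whenever $G^1\not\subseteq P$ and $N\not\subseteq P$ the parabolic is of standard type with one of the two heights on simple roots equal to zero is exactly the content of the paper's Lie-algebra argument, and it is the piece missing from your proof. The correct dichotomy is ``either $G^1\subseteq P$ or $N\subseteq P$, and one divides out and inducts; or neither holds, and one must argue directly'' --- the second branch still requires an argument you have not supplied.
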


\begin{proof}
    The first step consists in taking the quotient of $P$ by the kernel of the iterated Frobenius of highest possible height. Hence we can make the hypothesis that the Frobenius kernel $G^1$ is not contained in $P$, which is equivalent to assuming that the Lie algebra of $P$ is not the whole of $\Lie G$. 
    Looking at structure constants, we see that
    \[
    [\mathfrak{g}_{-3\alpha_1-2\alpha_2}, \mathfrak{g}_{\gamma}] = \mathfrak{g}_{-3\alpha_1-2\alpha_2+\gamma},
    \]
    for any positive root $\gamma$ such that $-3\alpha_1-2\alpha_2+\gamma$ is still a root. Hence, if $\mathfrak{g}_{-3\alpha_1-2\alpha_2}$ intersects $\Lie P$, then all other negative root subspaces do and we get $\Lie P = \Lie G$. Thus by our assumption, we necessarily have
    \begin{align}
    \label{no_longest}
        \mathfrak{g}_{-3\alpha_1-2\alpha_2} \cap \Lie P = 0.
    \end{align}
    Moreover, by taking the quotient via the very special isogeny $\pi$, which exists because we are in characteristic $3$, we can assume that $N\defeq N_G$ is not contained in $P$. In other words, we make the hypothesis that at least one root subspace associated to a short negative root does not intersect $\Lie P$. Let us notice that 
    \[
    [\mathfrak{g}_{-2\alpha_1-\alpha_2}, \mathfrak{g}_{\alpha_1}] = \mathfrak{g}_{-\alpha_1-\alpha_2}, \quad
    [\mathfrak{g}_{-\alpha_1-\alpha_2}, \mathfrak{g}_{\alpha_2}] = \mathfrak{g}_{-\alpha_1}, \quad
    [\mathfrak{g}_{-\alpha_1}, \mathfrak{g}_{-\alpha_1-\alpha_2}] = \mathfrak{g}_{-2\alpha_1-\alpha_2}, 
    \]
    because all three structure constants are equal to $\pm 2$. Thus, if the root subspace associated to $-2\alpha_1$ or to $\alpha_1-\alpha_2$ intersects $\Lie P$, then $\Lie N$ is contained in $\Lie P$ and we have a contradiction. Hence we are in the following situation, where $l,r,s$ are non-negative integers and we place next to each negative root $\gamma$ the height of the intersection $P \cap U_{\gamma}$.

    \begin{center}
\begin{tikzpicture}
    \foreach\ang in {60,120,...,360}{
     \draw[->,cite!80!black,thick] (0,0) -- (\ang:2cm);
    }
    \foreach\ang in {30,90,...,330}{
     \draw[->,cite!80!black,thick] (0,0) -- (\ang:3cm);
    }
    \node[anchor= west,scale=0.9] at (2,0) {\colorbox{cite!30!white}{$\alpha_1$}};
    \node[anchor= east,scale=0.9] at (-2,0) {$l$};
    \node[anchor=south west,scale=0.9] at (-3.3,1.3) {\colorbox{cite!30!white}{$\alpha_2$}};
    \node[anchor=north east,scale=0.9] at (3,-1.5) {$s$};
    \node[anchor=north west,scale=0.9] at (-3,-1.5) {$r$};
    \node[anchor=north,scale=0.9] at (-1.2,-1.8) {$0$};
    \node[anchor=north,scale=0.9] at (1.2,-1.8) {$0$}; 
    \node[anchor=north,scale=0.9] at (0,-3) {$0$};
  \end{tikzpicture}
\end{center}
Finally, we can also say that $r$ must be equal to zero, because
\[
[\mathfrak{g}_{-3\alpha_1-\alpha_2},\mathfrak{g}_{\alpha_1}]  = \mathfrak{g}_{-2\alpha_1-\alpha_2}.
\]
This means that 
\[P = G^l P^{\alpha_1} \cap G^s P^{\alpha_2},
\]
with one among $l$ and $s$ which vanishes (because otherwise $P$ would contain $G^1$).
\end{proof}

\subsubsection{Characteristic two}
\label{exotic}
Let $G$ be of type $G_2$  in characteristic $2$. 
Let us briefly recall some results which are shown in \cite[Section 2.6]{Maccan}. First, there exist two maximal $p$-Lie subalgebras of $\Lie G$ containing $\Lie P^{\alpha_1}$, namely:
\[
\mathfrak{h} \defeq \Lie P^{\alpha_1} \oplus \mathfrak{g}_{-2\alpha_1-\alpha_2} \quad \text {and} \quad
\mathfrak{l} \defeq \Lie P^{\alpha_1} \oplus \mathfrak{g}_{-\alpha_1} \oplus \mathfrak{g}_{-\alpha_1-\alpha_2}.
\]
\begin{lemma}
\label{h_l}
 The $p$-Lie subalgebras of $\Lie G$ containing strictly $\Lie P^{\alpha_1}$ are exactly $\mathfrak{h}$ and $\mathfrak{l}$.
\end{lemma}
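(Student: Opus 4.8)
The plan is to study a $p$-Lie subalgebra $\mathfrak{m}$ with $\Lie P^{\alpha_1}\subsetneq\mathfrak{m}$ through its image $\overline{\mathfrak{m}}$ in the quotient $W\defeq\Lie G/\Lie P^{\alpha_1}$. Since $\Lie P^{\alpha_1}$ already contains $\Lie T$, every positive root space, and $\mathfrak{g}_{-\alpha_2}$, the space $W$ is spanned by the classes of the five negative root spaces $\mathfrak{g}_{-\gamma}$ with $\alpha_1\in\Supp(\gamma)$, namely for
\[
\gamma\in\{\alpha_1,\ \alpha_1+\alpha_2,\ 2\alpha_1+\alpha_2,\ 3\alpha_1+\alpha_2,\ 3\alpha_1+2\alpha_2\}.
\]
Classifying the $\mathfrak{m}$ thus amounts to describing the admissible subspaces $\overline{\mathfrak{m}}\subseteq W$. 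The first move is to exploit the torus: as $\mathfrak{m}\supseteq\Lie T$ is a subalgebra, $\overline{\mathfrak{m}}$ is stable under $\ad(\Lie T)$. A direct computation of the differentials $\dd\gamma$ modulo $2$ (the Cartan matrix of $G_2$ has determinant $1$, so $\dd\alpha_1,\dd\alpha_2$ stay independent over $k$) shows that these five roots fall into exactly three distinct $T$-weights: $-\alpha_1$ and $-(3\alpha_1+2\alpha_2)$ share one, $-(\alpha_1+\alpha_2)$ and $-(3\alpha_1+\alpha_2)$ share another, and $-(2\alpha_1+\alpha_2)$ is alone. Because the three weights are pairwise distinct, a generic $H\in\Lie T$ has three distinct eigenvalues on $W$, and hence $\overline{\mathfrak{m}}$ splits as the direct sum of its intersections with the three weight spaces.

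The core of the argument is then to show that the two \emph{long} negative roots $-(3\alpha_1+\alpha_2)$ and $-(3\alpha_1+2\alpha_2)$ cannot occur in a proper $\mathfrak{m}$. I would prove that if $\overline{\mathfrak{m}}$ has a nonzero component along either of them, then applying $\ad$ of the nilradical vectors $X_{\alpha_1},X_{\alpha_1+\alpha_2},X_{2\alpha_1+\alpha_2}$ and of the Levi vectors $X_{\pm\alpha_2}$ produces, one step at a time, all five classes, forcing $\overline{\mathfrak{m}}=W$, i.e. $\mathfrak{m}=\Lie G$. Consequently a proper $\mathfrak{m}$ satisfies $\overline{\mathfrak{m}}\subseteq\langle\mathfrak{g}_{-\alpha_1},\mathfrak{g}_{-(\alpha_1+\alpha_2)},\mathfrak{g}_{-(2\alpha_1+\alpha_2)}\rangle$; within the first two (now one-dimensional) weight intersections this pins $\overline{\mathfrak{m}}$ to the lines $\mathfrak{g}_{-\alpha_1}$ and $\mathfrak{g}_{-(\alpha_1+\alpha_2)}$, while $\ad(X_{\alpha_2})$ and $\ad(X_{-\alpha_2})$ tie these two together (each forces the other). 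This leaves only four candidate subspaces: $0$, $\mathfrak{g}_{-(2\alpha_1+\alpha_2)}$, $\mathfrak{g}_{-\alpha_1}\oplus\mathfrak{g}_{-(\alpha_1+\alpha_2)}$, and their sum.

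The last step is to impose closure under the bracket, which is exactly where the structure constants decide everything. Using the formula $[X_\gamma,X_\delta]=\pm(\mathcal{N}(\gamma,\delta)+1)X_{\gamma+\delta}$ in characteristic two, one finds that $[\mathfrak{g}_{-\alpha_1},\mathfrak{g}_{-(\alpha_1+\alpha_2)}]$ has coefficient $\pm 2\equiv 0$, so $\mathfrak{g}_{-\alpha_1}\oplus\mathfrak{g}_{-(\alpha_1+\alpha_2)}$ together with $\Lie P^{\alpha_1}$ is closed and yields $\mathfrak{l}$; the singleton $\mathfrak{g}_{-(2\alpha_1+\alpha_2)}$ yields $\mathfrak{h}$; whereas $[\mathfrak{g}_{-\alpha_1},\mathfrak{g}_{-(2\alpha_1+\alpha_2)}]=\pm 3\,X_{-(3\alpha_1+\alpha_2)}\neq 0$, so the fourth candidate is not a subalgebra (it would drag in the long root $-(3\alpha_1+\alpha_2)$, hence all of $W$). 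The restricted $[p]$-operation imposes nothing further: each root vector squares to $0$ and, since $p=2$, $(X+Y)^{[2]}=X^{[2]}+Y^{[2]}+[X,Y]$, so $[p]$-closure follows once the bracket closes. Discarding $\overline{\mathfrak{m}}=0$ (not strict) and $\mathfrak{m}=\Lie G$ (the full, improper algebra), we are left with exactly $\mathfrak{h}$ and $\mathfrak{l}$, which are therefore the two maximal proper $p$-Lie subalgebras containing $\Lie P^{\alpha_1}$.

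The main obstacle, and the reason the phenomenon is special to $p=2$, is the coincidence of torus weights that creates the two two-dimensional weight spaces $\langle\mathfrak{g}_{-\alpha_1},\mathfrak{g}_{-(3\alpha_1+2\alpha_2)}\rangle$ and $\langle\mathfrak{g}_{-(\alpha_1+\alpha_2)},\mathfrak{g}_{-(3\alpha_1+\alpha_2)}\rangle$: this blocks the naive argument that $\mathfrak{m}$ is simply a sum of root spaces and forces the more delicate nilradical-generation step. The second delicate point is the opposition, in characteristic two, between the vanishing constant $\pm 2$ (which makes $\mathfrak{l}$ a subalgebra) and the surviving constant $\pm 3$ (which prevents enlarging $\mathfrak{l}$ by $\mathfrak{g}_{-(2\alpha_1+\alpha_2)}$); keeping careful track of which of these structure constants die modulo $2$ is where the computation must be done with particular care.
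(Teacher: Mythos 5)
Your proposal is correct, but note that the paper itself does not prove \Cref{h_l}: it is quoted from \cite[Section 2.6]{Maccan}, so there is no in-paper argument to compare against. Your route is the natural self-contained one, and the computations check out. In particular: the three weight classes $\dd\alpha_1$, $\dd\alpha_1+\dd\alpha_2$, $\dd\alpha_2$ on $W=\Lie G/\Lie P^{\alpha_1}$ are indeed pairwise distinct (since $\alpha_1,\alpha_2$ form a basis of $X(T)$ for $G_2$, their reductions stay independent in $X(T)\otimes k$), so the generic-$H$ splitting of $\overline{\mathfrak{m}}$ into the three weight components is legitimate; the elimination of the long roots works as you sketch, e.g.\ a vector $aX_{-(\alpha_1+\alpha_2)}+bX_{-(3\alpha_1+\alpha_2)}$ with $b\neq 0$ yields $X_{-\alpha_1}$ via $\ad(X_{2\alpha_1+\alpha_2})$ (constant $\pm 1$), then $X_{-(\alpha_1+\alpha_2)}$ via $\ad(X_{-\alpha_2})$, hence $X_{-(3\alpha_1+\alpha_2)}$ itself, then $X_{-(3\alpha_1+2\alpha_2)}$ and $X_{-(2\alpha_1+\alpha_2)}$, forcing $\mathfrak{m}=\Lie G$; and the decisive constants are exactly as you state, $\mathcal{N}(-\alpha_1,-(\alpha_1+\alpha_2))+1=\pm 2\equiv 0$ (so $\mathfrak{l}$ closes) versus $\mathcal{N}(-\alpha_1,-(2\alpha_1+\alpha_2))+1=\pm 3\not\equiv 0$ (so $\mathfrak{l}\oplus\mathfrak{g}_{-(2\alpha_1+\alpha_2)}$ does not). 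Two cosmetic points: the chain-generation step is only sketched (``I would prove that\ldots''), so for a complete write-up you should display the handful of brackets above explicitly; and for $[p]$-closure of $\mathfrak{h}$ and $\mathfrak{l}$ you could simply observe that they are the Lie algebras of the subgroup schemes $P_{\mathfrak{h}}$ and $P_{\mathfrak{l}}$ (equivalently of $\langle H,P^{\alpha_1}\rangle$ and $\langle L,P^{\alpha_1}\rangle$ with $H,L$ of height one as in (\ref{defHL})), which is slightly cleaner than the direct $(X+Y)^{[2]}$ computation, though your computation is also fine.
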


Next, we consider the subgroups $H$ and $L$ of the group $G$, defined as being of height one with Lie algebra respectively equal to
\begin{align}
\label{defHL}
\Lie H \defeq \mathfrak{g}_{-2\alpha_1-\alpha_2} \quad \text{and} \quad \Lie L \defeq \mathfrak{g}_{-\alpha_1} \oplus \mathfrak{g}_{-\alpha_1-\alpha_2}.
\end{align}
Finally, set
\[
P_\mathfrak{h} \defeq \langle H, P^{\alpha_1} \rangle \quad \text{and} \quad P_\mathfrak{l} \defeq \langle L, P^{\alpha_1} \rangle.
\]
This defines two parabolic subgroups which cannot be described as $(\ker \xi)P^{\alpha_1}$ for some isogeny $\xi$ with source $G$, due to the fact that $\Lie G$ is simple (see \cite[4.4]{Strade}).\\
These two exotic subgroups are enough to complete the classification in type $G_2$, as follows.

\begin{proposition}
\label{rank1_G}
    Let $G$ be of type $G_2$ in characteristic two.\\
    Then all parabolic subgroups of $G$ having $P^{\alpha_1}$ as reduced part are either of standard type, or obtained from $P_{\mathfrak{l}}$ and $P_{\mathfrak{h}}$ by pulling back with an iterated Frobenius homomorphism.
\end{proposition}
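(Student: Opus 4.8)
The plan is to mirror the characteristic three argument for this case: first strip off the Frobenius part, then use \Cref{h_l} to cut the possibilities for $\Lie P$ down to three, and finally pin down the exact height in the two exotic directions by a commutator relation that is invisible to \Cref{enne}.

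First I would reduce to the case $G^1 \not\subseteq P$. Taking the quotient by the iterated Frobenius kernel of highest height contained in $P$, that is, setting $m \defeq \min\{\varphi(\gamma) : \alpha_1 \in \Supp(\gamma)\}$, one writes $P$ as the pull-back by $F^m$ of a parabolic $\overline{P}$ with $\overline{P}_{\text{red}} = P^{\alpha_1}$, associated function $\varphi - m$, and $G^1 \not\subseteq \overline{P}$. Since pulling back by an iterated Frobenius sends standard parabolics to standard ones and $P_{\mathfrak{h}}, P_{\mathfrak{l}}$ to the subgroups in the statement, it is enough to classify $\overline{P}$; so from now on I may assume $\Lie P \subsetneq \Lie G$.

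Now $\Lie P$ is a $p$-Lie subalgebra with $\Lie P^{\alpha_1} \subseteq \Lie P \subsetneq \Lie G$, so \Cref{h_l} forces $\Lie P \in \{\Lie P^{\alpha_1}, \mathfrak{h}, \mathfrak{l}\}$. If $\Lie P = \Lie P^{\alpha_1}$, then $U_P^-$ has trivial Lie algebra, hence is trivial and $P = P^{\alpha_1}$ is standard. By \Cref{rosa_nero_verde}(b) it remains, in the other two cases, to compute $\varphi$ on the positive roots with $\alpha_1$ in their support, namely $\alpha_1, \alpha_1 + \alpha_2, 2\alpha_1 + \alpha_2, 3\alpha_1 + \alpha_2$ and $3\alpha_1 + 2\alpha_2$. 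When $\Lie P = \mathfrak{h}$ one has $\varphi(2\alpha_1+\alpha_2) = k \geq 1$ and $\varphi = 0$ on the other four roots, and I must show $k = 1$. When $\Lie P = \mathfrak{l}$, two applications of \Cref{enne} (to the pairs $(\alpha_1,\alpha_2)$ and $(\alpha_1+\alpha_2,-\alpha_2)$, both with structure constant $\pm 1$) give $\varphi(\alpha_1) = \varphi(\alpha_1+\alpha_2) = k \geq 1$, with $\varphi = 0$ elsewhere, and again I must show $k = 1$.

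The crux is exactly this collapse of heights, and it is where the behaviour is genuinely exotic: the leading structure constants vanish in characteristic two, so \Cref{enne} is silent, and one has to descend to the second commutator coefficient. In the case $\Lie P = \mathfrak{h}$, although $\mathcal{N}(\alpha_1, -2\alpha_1-\alpha_2) = \pm 2$ is even, the Chevalley commutator
\[
(u_{3\alpha_1+\alpha_2}(s),\, u_{-2\alpha_1-\alpha_2}(t))
\]
contains the single factor $u_{-\alpha_1-\alpha_2}(c\, s t^2)$ with
\[
c = \tfrac{1}{2}\,\mathcal{N}(-2\alpha_1-\alpha_2,\, 3\alpha_1+\alpha_2)\,\mathcal{N}(-2\alpha_1-\alpha_2,\, \alpha_1) = \pm 1,
\]
an odd integer. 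As both entries lie in $P$ while $U_{-\alpha_1-\alpha_2} \cap P = 1$, \cite[Proposition 8]{Wenzel} forces $s t^2 = 0$ for all $s$ and all $t$ of height $k$; taking $s = 1$ yields $t^2 = 0$, hence $k = 1$ and $P = P_{\mathfrak{h}}$. The case $\Lie P = \mathfrak{l}$ is identical in spirit: here $\mathcal{N}(-\alpha_1, -\alpha_1-\alpha_2) = \pm 2$ is again even, but the commutator
\[
(u_{-\alpha_1}(t),\, u_{-\alpha_1-\alpha_2}(t'))
\]
has the factor $u_{-3\alpha_1-\alpha_2}(c'\, t^2 t')$ with
\[
c' = \tfrac{1}{2}\,\mathcal{N}(-\alpha_1,\, -\alpha_1-\alpha_2)\,\mathcal{N}(-\alpha_1,\, -2\alpha_1-\alpha_2) = \pm 3,
\]
still odd; since $U_{-3\alpha_1-\alpha_2} \cap P = 1$, this forces $t^2 t' = 0$ for all $t, t'$ of height $k$, whence $k = 1$ and $P = P_{\mathfrak{l}}$. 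The main obstacle is therefore not structural but arithmetic: one must single out precisely these divided-power coefficients $C_{12}$ and $C_{21}$ and verify that, unlike the first-order constants governing the same root strings, they survive modulo two.
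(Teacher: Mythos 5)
Your argument is correct, but note that the paper itself offers no proof of \Cref{rank1_G}: it simply cites \cite[Proposition 3]{Maccan}, so there is no internal argument to compare against. What you have written is a self-contained proof in exactly the spirit of the paper's neighbouring $G_2$ computations (the characteristic-three case and the Borel-reduced-part case in characteristic two), and it even reuses the commutator identity recorded as (\ref{n=1}). The skeleton is sound: stripping off the largest Frobenius kernel reduces to $\Lie P \subsetneq \Lie G$; \Cref{h_l} then leaves only $\Lie P^{\alpha_1}$, $\mathfrak{h}$, $\mathfrak{l}$ as possible Lie algebras; and Wenzel's Proposition 8 (\Cref{rosa_nero_verde}(b)) reduces everything to pinning down $\varphi$ on the five positive roots supported at $\alpha_1$. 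Your divided-power coefficients check out: for $(u_{3\alpha_1+\alpha_2}(s),u_{-2\alpha_1-\alpha_2}(t))$ the $(1,2)$-coefficient is $\tfrac12(\pm1)(\pm2)=\pm1$, and for $(u_{-\alpha_1}(t),u_{-\alpha_1-\alpha_2}(t'))$ the $(2,1)$-coefficient is $\tfrac12(\pm2)(\pm3)=\pm3$, both odd, so the factors $u_{-\alpha_1-\alpha_2}(\pm st^2)$ and $u_{-3\alpha_1-\alpha_2}(\pm t^2t')$ survive in characteristic two and land in trivial intersections, forcing $k=1$ in each case. Two cosmetic points: the constant you flag as being ``even and hence useless'' in the $\mathfrak{h}$ case, $\mathcal{N}(\alpha_1,-2\alpha_1-\alpha_2)$, belongs to a different commutator than the one you actually exploit (the $C_{11}$ of your commutator is $\mathcal{N}(3\alpha_1+\alpha_2,-2\alpha_1-\alpha_2)=\pm1$, which is harmless since the resulting factor lies in $U_{\alpha_1}\subseteq B$); and to conclude $P=P_{\mathfrak{h}}$ or $P=P_{\mathfrak{l}}$ from the computed $\varphi$ one should note, as you implicitly do, that $P_{\mathfrak{h}}$ and $P_{\mathfrak{l}}$ themselves have these numerical functions, which follows by running the same Lie-algebra argument on them. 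Neither point affects the validity of the proof.
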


\begin{proof}
    See \cite[Proposition 3]{Maccan}.    
\end{proof}

\begin{proposition}
    Let $P \subset G$ be a parabolic subgroup with \[
    P_{\text{red}}= B = P^{\alpha_1}\cap P^{\alpha_2}.\]
    Then (\ref{inclusion0}) is an equality.
\end{proposition}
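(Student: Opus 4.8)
The plan is to prove the equality in \eqref{inclusion0} for a parabolic $P$ in $G_2$ with $P_{\text{red}} = B$, in characteristic two. By \Cref{rosa_nero_verde}(b), it suffices to verify, for every positive root $\gamma$, the inequality $\varphi(\gamma) \geq \min\{\varphi_1(\gamma), \varphi_2(\gamma)\}$, since the reverse inequality is automatic from the inclusion. The roots we must treat are the short roots $\alpha_1, \alpha_1+\alpha_2, 2\alpha_1+\alpha_2$ and the long roots $\alpha_2, 3\alpha_1+\alpha_2, 3\alpha_1+2\alpha_2$. For the simple roots $\alpha_1, \alpha_2$ the inequality is an equality by \Cref{rosa_nero_verde}(a), so the content lies in the four non-simple roots.

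\textbf{The main complication and how to handle it.}
The difficulty, absent in the other types, is that the two exotic subgroups $Q^1 = \langle P, P^{\alpha_1}\rangle$ may no longer be of the form $(\ker \xi)P^{\alpha_1}$: by \Cref{rank1_G}, it could instead be a Frobenius pull-back of $P_{\mathfrak{h}}$ or $P_{\mathfrak{l}}$. Consequently the uniform statement of \Cref{rosa_nero_verde}(c), which says $\varphi_i$ depends only on root length, fails for the exotic $Q^1$. The key is therefore to compute the functions $\varphi_1$ and $\varphi_2$ explicitly in each of the three possible shapes of $Q^1$ — standard type, Frobenius pull-back of $P_{\mathfrak{h}}$, and Frobenius pull-back of $P_{\mathfrak{l}}$ — reading off the heights on each short root subgroup from the definitions \eqref{defHL} of $\Lie H$ and $\Lie L$. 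For $Q^2 = \langle P, P^{\alpha_2}\rangle$ one applies \Cref{kerphi}: since $\alpha_2$ is long, no $G_2$-exceptional behaviour intervenes, so $Q^2 = (\ker \xi_2)P^{\alpha_2}$ with $\xi_2$ an iterated Frobenius or a Frobenius-times-very-special composite, and \Cref{rosa_nero_verde}(c) applies to $\varphi_2$.

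\textbf{Carrying out the estimates.}
First I would dispose of the long roots. For $3\alpha_1+\alpha_2$ and $3\alpha_1+2\alpha_2$, one uses \Cref{enne} together with the nonvanishing structure constants of $\Lie G$ — exactly as in the type $F_4$ argument, expressing a long root as a sum $\gamma+\delta$ with $\mathcal{N}(\gamma,\delta)=\pm 1$ — to bound $\varphi$ from below by the minimum over constituent roots, then invoke \Cref{rosa_nero_verde}(c) to pass back to the desired minimum. For the short roots $\alpha_1+\alpha_2$ and $2\alpha_1+\alpha_2$ the analysis is more delicate: here I would run the same contradiction scheme as in \Cref{initialisation} and the $F_4$ proof. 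Assuming the strict inequality $\varphi(\gamma) < \min_i \varphi_i(\gamma)$, one propagates a uniform upper bound on $\varphi$ across all short roots containing $\alpha_1$ in their support, using \Cref{enne} along the chain of short roots. This forces $P \subseteq (\ker\zeta)P^{\alpha_1}$ for a strictly smaller $\zeta$, contradicting the minimality of $\xi_1$ (\Cref{minimality}), or contradicts the specific exotic shape of $Q^1$ identified above.

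\textbf{Where the real obstacle sits.}
The hardest part is the bookkeeping for the short roots when $Q^1$ is exotic. Because $\Lie H$ and $\Lie L$ pick out different short negative root spaces, the height function $\varphi_1$ is genuinely non-constant across short roots, so the clean length-based collapse used elsewhere is unavailable and each case must be checked by hand against the explicit structure constants that are equal to $\pm 2$ — hence \emph{vanish} in characteristic two. This vanishing is precisely what creates the exotic subalgebras in the first place, so the proof must track which commutation relations survive and which collapse. I expect that, once $Q^1$ is put into one of its three normal forms and the surviving brackets among $\mathfrak{g}_{-\alpha_1}$, $\mathfrak{g}_{-\alpha_1-\alpha_2}$, $\mathfrak{g}_{-2\alpha_1-\alpha_2}$ are recorded, the contradiction argument closes uniformly, but isolating the correct generating short root for each exotic case is the step requiring genuine care.
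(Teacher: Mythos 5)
Your overall framework (reduce to $\varphi(\gamma)\geq\min_i\varphi_i(\gamma)$ on the four non-simple positive roots, treat $Q^2$ via \Cref{kerphi} since $\alpha_2$ is long, and handle the three possible shapes of $Q^1$ separately) is reasonable, but the proof does not close at exactly the point you flag as requiring "genuine care", and that point cannot be deferred: it is the whole content of the $G_2$, $p=2$ case. The short root $2\alpha_1+\alpha_2$ admits only one decomposition as a sum of two \emph{positive} roots, namely $(\alpha_1+\alpha_2)+\alpha_1$, and the corresponding structure constant is $\pm 2=0$ in characteristic two, so \Cref{enne} gives no inequality there; worse, the inequality $\varphi(2\alpha_1+\alpha_2)\geq\min\{\varphi(\alpha_1+\alpha_2),\varphi(\alpha_1)\}$ is actually \emph{false} in general, since $P_{\mathfrak{l}}$ itself has $\varphi(\alpha_1)=\varphi(\alpha_1+\alpha_2)=1$ but $\varphi(2\alpha_1+\alpha_2)=0$ by (\ref{defHL}). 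So "propagating a uniform bound along the chain of short roots" breaks precisely between $\alpha_1+\alpha_2$ and $2\alpha_1+\alpha_2$. In addition, the contradiction scheme you import from the $B_2$ and $F_4$ arguments rests on \Cref{minimality}, i.e.\ on $Q^1$ being of the form $(\ker\xi_1)P^{\alpha_1}$ with $\xi_1$ minimal; when $Q^1$ is a Frobenius pull-back of $P_{\mathfrak{h}}$ or $P_{\mathfrak{l}}$ there is no such $\xi_1$, and you do not supply the replacement statement that would produce the contradiction. The information that actually pins down the heights at $-\alpha_1$, $-\alpha_1-\alpha_2$, $-2\alpha_1-\alpha_2$ lives not in the Lie-algebra structure constants but in the degree-two terms of the group-level commutators, e.g.\ $(u_{-\alpha_1}(a),u_{-\alpha_1-\alpha_2}(b))=u_{-3\alpha_1-\alpha_2}(a^2b)\,u_{-3\alpha_1-2\alpha_2}(ab^2)$ as in (\ref{n=1}) and (\ref{m=1}); these quadratic relations (which yield constraints such as "$l\geq 2$ forces $r\geq 1$") are invisible to \Cref{enne} and are never invoked in your plan.

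For comparison, the paper does not verify the numerical identity root by root at all: after quotienting by the largest Frobenius kernel to arrange (\ref{no_longest}), it directly classifies every $P$ with $P_{\text{red}}=B$ by bounding the five heights $l,m,n,r,s$ through the explicit commutator formulas above, concluding that $P$ equals $P_{\mathfrak{l}}\cap G^sP^{\alpha_2}$, $P_{\mathfrak{h}}\cap G^sP^{\alpha_2}$, or a standard intersection; the equality in (\ref{inclusion0}) is then immediate because each factor contains the corresponding $\langle P,P^{\alpha_i}\rangle$. To repair your argument you would need to import those group-level commutator computations, at which point you would essentially be reproducing the paper's proof rather than running the numerical-function scheme.
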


\begin{proof}
Taking the quotient of $P$ by the highest possible Frobenius kernel allows us to assume (\ref{no_longest}), as in the previous proof. The situation is summarized below, where $l,m,n,r,s$ are non-negative integers on which we now determine some conditions.

    \begin{center}
\begin{tikzpicture}
    \foreach\ang in {60,120,...,360}{
     \draw[->,cite!80!black,thick] (0,0) -- (\ang:2cm);
    }
    \foreach\ang in {30,90,...,330}{
     \draw[->,cite!80!black,thick] (0,0) -- (\ang:3cm);
    }
    \node[anchor= west,scale=0.9] at (2,0) {\colorbox{cite!30!white}{$\alpha_1$}};
    \node[anchor= east,scale=0.9] at (-2,0) {$l$};
    \node[anchor=south west,scale=0.9] at (-3.3,1.3) {\colorbox{cite!30!white}{$\alpha_2$}};
    \node[anchor=north east,scale=0.9] at (3,-1.5) {$s$};
    \node[anchor=north west,scale=0.9] at (-3,-1.5) {$r$};
    \node[anchor=north,scale=0.9] at (-1.2,-1.8) {$m$};
    \node[anchor=north,scale=0.9] at (1.2,-1.8) {$n$}; 
    \node[anchor=north,scale=0.9] at (0,-3) {$0$};
  \end{tikzpicture}
\end{center}
First, we notice that
\[
[\mathfrak{g}_{-3\alpha_1-\alpha_2},\mathfrak{g}_{-\alpha_2}] = [\mathfrak{g}_{-2\alpha_1-\alpha_2},\mathfrak{g}_{-\alpha_1-\alpha_2}] = \mathfrak{g}_{-3\alpha_1-2\alpha_2}
\]
because the two structure constants are respectively $\pm 1$ and $\pm 3$. Thus, one among $r$ and $s$ must vanish, and analogously for $m$ and $n$.\\\\
$\bullet$ Assume that $r$ vanishes and $n$ does not, which yields that $m$ is equal to zero. Moreover, $l$ is nonzero because
\[
[\mathfrak{g}_{-\alpha_1-\alpha_2}, \mathfrak{g}_{\alpha_2}] = \mathfrak{g}_{-\alpha_1}.
\]
A direct computation, involving the root subgroups computed in \cite[Appendix]{Maccan}, shows that for $a,b \in \Ga$,
\begin{align}
\label{n=1}
(u_{-\alpha_1}(a), u_{-\alpha_1-\alpha_2}(b)) = u_{-3\alpha_1-\alpha_2} (a^2b) \, u_{-3\alpha_1-2\alpha_2} (ab^2).
\end{align}
By \cite[Proposition $8$]{Wenzel}, if $l$ is bigger than $1$, then the unipotent infinitesimal part $U_P^-$, whose definition is recalled in (\ref{infinitesimal}), has nontrivial intersection with $U_{-3\alpha_1-\alpha_2}$. This yields that $r$ is nonzero, which contradicts our assumption. On the other hand, if $n \geq 2$, then $P$ has nontrivial intersection with $U_{-3\alpha_1-2\alpha_2}$, contradicting (\ref{no_longest}). This means that $l=n=1$, thus for any $s$ we get
\[
P = P_{\mathfrak{l}} \cap G^s P^{\alpha_2}.
\]
$\bullet$ Next, assume that both $r$ and $n$ vanish. If $m$ is also zero, then
\[
P = G^l P^{\alpha_1} \cap P^{\alpha_2} \quad \text{or} \quad P= P^{\alpha_1} \cap G^s P^{\alpha_2}
\]
which are standard. Thus we can assume $m$ to be nonzero. Another direct computation shows that for $b \in \Ga$,
\begin{align}
\label{m=1}
( u_{\alpha_1}(1), u_{-2\alpha_1-\alpha_2}(b)) = u_{\alpha_2} (b^2) \, u_{-3\alpha_1-2\alpha_2} (b),
\end{align}
hence if $m\geq 2$, then $P$ has again nontrivial intersection with $U_{-3\alpha_1-2\alpha_2}$. Moreover, $l$ must vanish too, because 
\[
[\mathfrak{g}_{-\alpha_1},\mathfrak{g}_{-2\alpha_1-\alpha_2}] = \mathfrak{g}_{-3\alpha_1-\alpha_2}
\]
hence if $l$ does not vanish then the same holds for $r$. This gives for any $s$,
\[
P = P_{\mathfrak{h}} \cap G^s P^{\alpha_2}.
\]
$\bullet$ We are left with the case where $r$ is nonzero, which implies that $s$ must vanish. From the equality 
\[
[\mathfrak{g}_{-\alpha_1-\alpha_2}, \mathfrak{g}_{\alpha_1}] = \mathfrak{g}_{-\alpha_2},
\]
we deduce that $n$ is also equal to zero. 
Finally, a direct computation shows
\[
(u_{-3\alpha_1-\alpha_2}(a), u_{\alpha_1}(b)) = u_{-2\alpha_1-\alpha_2} (a) \, u_{-\alpha_1-\alpha_2}(b) \, u_{-\alpha_2}(b).
\]
By \cite[Proposition $8$]{Wenzel}, if $r$ is nonzero then we have that $u_{-\alpha_2}(b)$ belongs to $P$ for $b \in \boldsymbol{\alpha}_2$; however this is a contradiction with the fact that $s$ is equal to zero.
\end{proof}


\section{Some geometric consequences}
Let us start by mentioning one immediate consequence of \Cref{mainnnn}. The statement, together with the case of maximal reduced part (\Cref{kerphi}) implies the following.

\begin{corollary}
    Except for a group of type $G_2$ in characteristic $2$, any parabolic subgroup is of quasi-standard type.
\end{corollary}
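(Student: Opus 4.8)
The plan is to deduce this statement directly from the two main classification results already at our disposal: the global decomposition of \Cref{mainnnn} and the local description of parabolics with maximal reduced part in \Cref{kerphi}. First I would apply \Cref{mainnnn} to an arbitrary parabolic $P$ with reduced part $P_I$, writing
\[
P = \bigcap_{\alpha \in \Delta \backslash I} Q^\alpha, \qquad Q^\alpha = \langle P, P^\alpha \rangle.
\]
By the construction recalled in \Cref{sec:contractions}, each $Q^\alpha$ is a parabolic subgroup whose maximal reduced part is exactly $P^\alpha$; this is the key point that makes the next step applicable.

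Next, I would treat each factor $Q^\alpha$ separately. Since $(Q^\alpha)_{\text{red}} = P^\alpha$, \Cref{kerphi} provides --- as long as we are not in the excluded situation --- a unique isogeny $\xi_\alpha$ with no central factor satisfying $Q^\alpha = (\ker \xi_\alpha) P^\alpha$, with \Cref{minimality} identifying $\xi_\alpha$ as the minimal such isogeny. By \Cref{factorisation} each $\xi_\alpha$ is then either an iterated Frobenius or its composition with the very special isogeny, so that $\ker \xi_\alpha \in \{ G^{m}, N^{m} \}$. Substituting these descriptions into the intersection above yields
\[
P = \bigcap_{\alpha \in \Delta \backslash I} (\ker \xi_\alpha) P^\alpha,
\]
which is by definition a parabolic subgroup of quasi-standard type, and the proof is complete.

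The single point requiring care --- and the reason for the exclusion in the statement --- is the failure of \Cref{kerphi} for $G$ of type $G_2$ in characteristic $2$ at the short simple root $\alpha_1$. There the subgroup $Q^{\alpha_1}$ may coincide with one of the exotic parabolics $P_{\mathfrak h}$ or $P_{\mathfrak l}$ built from the maximal $p$-Lie subalgebras $\mathfrak h$ and $\mathfrak l$ of \Cref{h_l}, and these cannot be written as $(\ker \xi) P^{\alpha_1}$ for any isogeny $\xi$, because $\Lie G$ is simple. I therefore expect the only genuine obstacle to be checking that this is indeed the sole exception: for every other pair $(G, p)$, and for every $\alpha \in \Delta \backslash I$, the hypothesis of \Cref{kerphi} is met by $Q^\alpha$, so the argument goes through verbatim.
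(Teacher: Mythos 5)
Your proposal is correct and follows exactly the paper's (very brief) argument: the paper deduces the corollary by combining \Cref{mainnnn}, which writes $P$ as the intersection of the $Q^\alpha = \langle P, P^\alpha\rangle$ with maximal reduced part $P^\alpha$, with \Cref{kerphi}, which identifies each such $Q^\alpha$ as $(\ker\xi_\alpha)P^\alpha$ outside the $G_2$, characteristic $2$ exception. Your identification of the exotic parabolics $P_{\mathfrak h}$ and $P_{\mathfrak l}$ as the sole source of the exclusion also matches the paper's discussion in \Cref{exotic}.
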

For the $G_2$ case, two \emph{exotic} parabolic subgroups do occur: see \Cref{exotic} for more details.\\\\
Let us move on to investigating some geometric properties of rational projective homogeneous spaces. In order to do it, we make use of the family of contractions of maximal relative Picard rank with source $X=G/P$, denoted
\[
f_\alpha \colon X \longrightarrow G/Q^\alpha = G/\langle P,P^\alpha\rangle, \quad \text{for } \alpha \in \Delta \backslash I,
\]
which are defined in (\ref{f_alpha}). Those morphisms are entirely determined, up to a permutation, by the variety $X$: see \cite[Section 3.2]{Maccan} for more details.

\begin{remark}
Considering the underlying varieties, \Cref{mainnnn} means that the product of the contractions $f_\alpha$ realises any rational homogeneous space $X$ as a closed subvariety of the product
\begin{align}
\label{emb}
    X = G/P \longhookrightarrow \prod_{\alpha \in \Delta \backslash I} G/Q^\alpha 
\end{align}
By \cite[Theorem 1]{Maccan}, each $G/Q^\alpha$ is either isomorphic to a flag variety having as stabilizer a maximal reduced parabolic (hence defined over $\Z$) or, in the case of $G_2$ and characteristic $2$, it can be isomorphic to the variety
\[
\mathcal{X} \defeq G_2/P_{\mathfrak{l}}.
\]
The latter is described in \cite[Section 2.6.4]{Maccan} as being a general hyperplane section of the $\Sp_6$-homogeneous variety of isotropic $3$-dimensional subspaces in a $6$-dimensional vector space.
\end{remark}

\begin{corollary}
\label{veryample}
    Every ample line bundle on $X = G/P$ is very ample.
\end{corollary}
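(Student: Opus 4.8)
The plan is to combine the closed embedding (\ref{emb}) furnished by \Cref{mainnnn} with the very ampleness of the ample generator on each Picard-rank-one factor $G/Q^\alpha$. Concretely, I would first pin down the ample cone of $X$ in terms of the Schubert divisors $D_\alpha$, then show that each $\mathcal{O}(E_\alpha)$ is very ample, and finally pull back a very ample bundle on the product.

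First I would describe $\Pic(X)$ and its ample cone. Since a character of the group scheme $P$ is trivial on its unipotent and infinitesimal parts, $X(P) = X(P_{\text{red}})$, so that $\Pic(X)$ is free on the classes $\mathcal{O}_X(D_\alpha)$, $\alpha \in \Delta \backslash I$ (see \cite{Maccan}). The Schubert curves $C_\beta = \overline{U_{-\beta}o}$ form a dual system: the computation in the proof of \Cref{lem:m_alpha} gives $D_\alpha \cdot C_\alpha = 1$, while for $\beta \neq \alpha$ the curve $C_\beta$ is contracted by $f_\alpha$, whence $D_\alpha \cdot C_\beta = f_\alpha^\ast E_\alpha \cdot C_\beta = E_\alpha \cdot (f_\alpha)_\ast C_\beta = 0$. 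As the classes $[C_\alpha]$ generate the cone of curves of $X$ (they are exactly the extremal curves of the contractions $f_\alpha$, see \cite[Section 3.2]{Maccan}), a line bundle $L = \mathcal{O}_X(\sum_\alpha a_\alpha D_\alpha)$ is ample if and only if $a_\alpha \geq 1$ for all $\alpha \in \Delta \backslash I$.

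Next I would verify that on each factor $G/Q^\alpha$ the ample generator $\mathcal{O}(E_\alpha)$ is very ample. By \cite[Theorem 1]{Maccan} there are two cases. Either $G/Q^\alpha$ is isomorphic to a flag variety $G'/P'$ of a simple simply connected group with reduced maximal parabolic stabiliser, in which case $\mathcal{O}(E_\alpha)$ corresponds to a fundamental weight of $G'$ and is very ample, realising the minimal homogeneous embedding into the projectivisation of the associated Weyl module; or, in type $G_2$ and characteristic $2$, $G/Q^\alpha$ is isomorphic to $\mathcal{X} = G_2/P_{\mathfrak{l}}$, whose description in \cite[Section 2.6.4]{Maccan} as a hyperplane section of the symplectic isotropic Grassmannian exhibits $\mathcal{O}(E_\alpha)$ as the restriction of $\mathcal{O}(1)$, hence very ample. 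In either case every positive power $\mathcal{O}(E_\alpha)^{\otimes a_\alpha}$ with $a_\alpha \geq 1$ is again very ample.

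Finally I would assemble these ingredients. The proof of \Cref{lem:m_alpha} gives $m_\alpha = 1$, so $\mathcal{O}_X(D_\alpha) = f_\alpha^\ast \mathcal{O}(E_\alpha)$; writing $g = (f_\alpha)_\alpha \colon X \to \prod_\alpha G/Q^\alpha$ for the product morphism, \Cref{mainnnn} shows (via (\ref{emb})) that $g$ is a closed immersion. For an ample $L = \mathcal{O}_X(\sum_\alpha a_\alpha D_\alpha)$ with all $a_\alpha \geq 1$ we then obtain
\[
L = g^\ast\Big( \boxtimes_\alpha \, \mathcal{O}(E_\alpha)^{\otimes a_\alpha} \Big).
\]
Since the external tensor product of very ample bundles is very ample on the product (via the Segre embedding) and the pullback of a very ample bundle along a closed immersion is very ample, it follows that $L$ is very ample. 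I expect the main obstacle to be the second paragraph, namely reducing every $G/Q^\alpha$ to a situation where the ample generator is visibly very ample, and in particular handling the exotic quotient $\mathcal{X}$ in type $G_2$, characteristic $2$, where the classical highest-weight-orbit argument is unavailable and one must invoke its explicit geometric realisation. A secondary delicate point is the identification of the ample cone in the first paragraph, which relies on knowing that the Schubert curves generate the Mori cone of the possibly non-reduced variety $X$.
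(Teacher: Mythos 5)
Your proposal follows essentially the same route as the paper: reduce to the Picard-rank-one factors $G/Q^\alpha$ (reduced stabiliser: classical; the exotic $\mathcal{X}$ in type $G_2$, characteristic $2$: via its realisation as a hyperplane section of the $\Sp_6$-flag variety), then pull back a very ample bundle through the closed embedding (\ref{emb}) into the product. The paper states this more tersely, but your added details --- the identification of the ample cone via the Schubert curves and the explicit formula $L = g^\ast\bigl(\boxtimes_\alpha \mathcal{O}(E_\alpha)^{\otimes a_\alpha}\bigr)$ --- are exactly what its argument implicitly relies on.
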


Before proving this statement, let us briefly recall what ampleness on $X$ looks like: thanks to the Białynicki-Birula decomposition, there is an explicit basis of the Picard group of $X$, given by the Schubert divisors $D_\alpha$ defined in (\ref{Dalpha}). In \cite[Section 3.2]{Maccan}, we show that a line bundle on $X$ is ample if and only if it writes as a linear combination of the $D_\alpha$s with strictly positive coefficients. In particular, from \Cref{veryample} we can deduce that $X$ has a minimal embedding into projective space as
\[
X \longhookrightarrow \proj (H^0(X, D)^\vee), \quad D \defeq \sum_{\alpha \in \Delta \backslash I} D_\alpha.
\]

\begin{proof} (of \Cref{veryample})
    When the stabiliser $P$ is reduced this is a well-known fact; it is also true for the exotic variety $\mathcal{X}$ above, due to its construction as a general hyperplane section of a generalised $\Sp_6$-flag variety. Thus, it holds for any rational projective homogeneous variety of Picard rank one. The embedding (\ref{emb}) into the product of the $G/Q^\alpha$ is enough to conclude the analogous statement for any $X$.
\end{proof}


\subsection{Stabiliser of contractions: standard type}

Let us now compute the stabilizer $Q^\alpha$ of each contraction $f_\alpha$ of maximal relative Picard rank, in the case of a parabolic subgroup $P$ of standard type. This shows that parabolics of quasi-standard type - or even exotic ones in type $G_2$ - can already occur in this context.\\

Let $P$ be of standard type and $r$ be the Picard rank of $G/P$; then there are distinct simple roots $\beta_1,\ldots, \beta_r$ and non-negative integers $m_1\leq \ldots \leq m_r$ satisfying 
\begin{align}
\label{standard}
P = G^{m_1 }P^{\beta_1} \cap \ldots \cap G^{m_r} P^{\beta_r}.
\end{align}
Considering the quotient of $G$ by the $m_1$-th iterated Frobenius kernel allows us to assume that $m_1$ is equal to one, so that the contraction $f_{\beta_1}$ associated to $\beta_1$ 
 is smooth. Let us denote as 
\[Q^i \defeq Q^{\beta_i} = \langle P, P^{\beta_i} \rangle
\]
the stabilizer of the target of the contraction associated to $\beta_i$; in particular,
\[
Q^1 = P^{\beta_1}.
\]
Clearly, by definition $Q^i$ is contained in $G^{m_i}P^{\beta_i}$, however it could a priori be smaller; by \Cref{lem:m_alpha}, the intersection of $Q^i$ with $U_{-\beta_i}$ necessarily has height $m_i$. We examine the different situations that can happen, in order to determine under which conditions $Q^i$ is not of standard type anymore. First, let us introduce the following notation in type $G_2$.

\begin{definition}
    Let $G$ be of type $G_2$ in characteristic two.\\
    The pullback by an $m$-th iterated Frobenius morphisms of the height one subgroup $L$, whose definition is recalled in (\ref{defHL}), is denoted
    \[
    L^m \defeq (F^m)^{-1}L.
    \]
\end{definition}

\begin{lemma}
\label{Qforstandard}
    Let $P$ be a parabolic subgroup of standard type as in (\ref{standard}), with $m_1$ equal to one.
    \begin{enumerate}[(1)]
        \item If $G$ does not satisfy the edge hypothesis and is not of type $G_2$, then each $Q^i$ is standard.
        \item If $G$ satisfies the edge hypothesis, 
        \[
        Q^i = N^{m_i-1} P^{\beta_i}
        \]
        if and only if all positive long roots containing $\beta_i$ in their support also contain some $\beta_l$ with $m_l < m_i$.
        \item If $G$ is of type $G_2$ in characteristic $2$, the only case where a parabolic not of standard type (with maximal reduced part) appears is when
        \[
        P = P^{\alpha_2} \cap G^m P^{\alpha_1},
        \]
        for which we get one exotic stabiliser, namely
        \[
        Q^1 = L^{m-1} P^{\alpha_1}.
        \]
   \end{enumerate}
\end{lemma}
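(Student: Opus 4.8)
The plan is to turn the whole statement into a comparison of numerical functions, exactly as in the proof of \Cref{mainnnn}. The starting observation is that, since $P=\bigcap_{l}G^{m_l}P^{\beta_l}$, intersecting with a single negative root subgroup and taking heights gives
\[
\varphi(\gamma)=\min\{\,m_l : \beta_l\in\Supp(\gamma)\,\}
\]
for every positive root $\gamma$ not in the Levi subgroup of $P_{\text{red}}$. On the other side, each $Q^i=\langle P,P^{\beta_i}\rangle$ has maximal reduced part $P^{\beta_i}$, so (away from the short simple root of $G_2$ in characteristic two) \Cref{kerphi} writes it as $(\ker\xi_i)P^{\beta_i}$; by \Cref{minimality} the kernel $\ker\xi_i$ is the smallest one in the chain of \Cref{minimale} for which $P\subseteq(\ker\xi_i)P^{\beta_i}$, and by \Cref{lem:m_alpha} it has height $m_i$ on $U_{-\beta_i}$. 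Since a parabolic subgroup with fixed reduced part is determined by its function (\Cref{rosa_nero_verde}(b)), the task reduces to comparing the $\varphi$ above with the functions of the finitely many candidate kernels, which I read off from \Cref{rosa_nero_verde}(c).

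For part (1), \Cref{factorisation} says that when $G$ is neither of type $G_2$ nor subject to the edge hypothesis, the only isogenies with no central factor are the iterated Frobenius morphisms; hence $\ker\xi_i=G^{m_i}$ and $Q^i=G^{m_i}P^{\beta_i}$ is standard. For part (2) the very special isogeny is available, so $\ker\xi_i$ is one of the two kernels of height $m_i$ on $U_{-\beta_i}$. If $\beta_i$ is long these are $G^{m_i}$ and $N^{m_i}$, and since $G^{m_i}\subsetneq N^{m_i}$ while $G^{m_i}P^{\beta_i}$ already contains $P$, minimality forces $Q^i=G^{m_i}P^{\beta_i}$; this is why the stated criterion can only be met at a short $\beta_i$. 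If $\beta_i$ is short the candidates are $G^{m_i}$ and $N^{m_i-1}$, with $N^{m_i-1}\subsetneq G^{m_i}$, so minimality selects $N^{m_i-1}P^{\beta_i}$ as soon as $P\subseteq N^{m_i-1}P^{\beta_i}$. Using that $N^{m_i-1}P^{\beta_i}$ has function $m_i$ on short and $m_i-1$ on long roots containing $\beta_i$, the containment is automatic on short roots, while on a long root $\gamma\ni\beta_i$ it reads $\varphi(\gamma)=\min\{m_l:\beta_l\in\Supp(\gamma)\}\leq m_i-1$, that is, some $\beta_l\in\Supp(\gamma)$ satisfies $m_l<m_i$. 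This is precisely the stated condition; when it fails the containment fails and the minimal kernel is $G^{m_i}$.

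For part (3) the long simple root $\alpha_2$ is harmless: characteristic two admits no very special isogeny for $G_2$, so \Cref{factorisation} and \Cref{kerphi} give that $Q^{\alpha_2}$ is standard. The short root $\alpha_1$ is the one excluded from \Cref{kerphi}, so I would invoke the full classification \Cref{rank1_G}: a parabolic subgroup with reduced part $P^{\alpha_1}$ is $G^kP^{\alpha_1}$, a Frobenius pull-back $L^{k}P^{\alpha_1}$ of $P_{\mathfrak{l}}$, or a Frobenius pull-back of $P_{\mathfrak{h}}$. I would compute the three functions from the Lie algebras, noting that $\mathfrak{l}$ fattens the short directions $-\alpha_1,-\alpha_1-\alpha_2$ while $\mathfrak{h}$ fattens $-2\alpha_1-\alpha_2$, and compare them with $\varphi$. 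The long roots $3\alpha_1+\alpha_2$ and $3\alpha_1+2\alpha_2$, whose support always contains $\alpha_2$, play exactly the role of the long-root obstruction of part (2): the minimal parabolic through $P$ is the exotic $L^{m-1}P^{\alpha_1}$ precisely in the configuration singled out in the statement, and is standard otherwise. I expect this last case to be the main obstacle, both because \Cref{kerphi} is unavailable and because one must rule out the competing family of pull-backs of $P_{\mathfrak{h}}$ by hand, checking on the two long roots above that it is never the minimal parabolic containing $P$.
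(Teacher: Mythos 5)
Your proposal is correct and follows essentially the same route as the paper: \Cref{lem:m_alpha} together with \Cref{kerphi} (resp.\ \Cref{rank1_G} for $G_2$) pins down the finitely many candidates for $Q^i$, and minimality reduces everything to a root-by-root comparison of numerical functions, with the long/short dichotomy of \Cref{rosa_nero_verde}(c) producing exactly the criterion of part (2). The only cosmetic difference is in part (3), where the paper shortcuts via \Cref{h_l} (the height of $Q^1\cap U_{-\alpha_1}$ forces $\mathfrak{l}\subseteq\Lie Q^1$, hence $L^{m-1}P^{\alpha_1}\subseteq Q^1$) instead of comparing functions over the full list of \Cref{rank1_G}; note also that the $P_{\mathfrak{h}}$-pullbacks are ruled out because matching the height $m$ on $U_{-\alpha_1}$ forces them to contain $G^{m}P^{\alpha_1}$ — the distinguishing root being the short root $2\alpha_1+\alpha_2$ rather than the two long roots you single out.
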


\begin{proof}
\textbf{(1)} Assume $G$ is not of type $G_2$ and does not satisfy the edge hypothesis.\\
Then by \Cref{kerphi}, the only parabolic subgroup scheme with reduced part $P^{\beta_i}$ and whose intersection with $U_{-\beta_i}$ has height $m_i$ is 
\[
G^{m_i}P^{\beta_i},
\]
so the latter necessarily coincides with $Q^i$.\\
\textbf{(2)} Assume $G$ satisfies the edge hypothesis. By \Cref{lem:m_alpha} together with \Cref{kerphi}, $Q^i$ is obtained from its reduced part either by fattening with $G^{m_i}$ or with $N^{m_i-1}$. The second case can happen if and only if the equality 
\begin{align}
\label{uguaglianza}
(P = ) \quad G^{m_i} P^{\beta_i} \cap \left(  \bigcap_{j\neq i} G^{m_j}P^{\beta_j} \right) = N^{m_i-1} P^{\beta_i} \cap \left(  \bigcap_{j\neq i} G^{m_j}P^{\beta_j} \right)  
\end{align}
is satisfied. Let $\varphi$ and $\psi$ be respectively the function associated to the left and the right hand term. 
If the support of $\gamma$ does not contain $\beta_i$, then the whole root subgroup $U_{-\gamma}$ is contained in $P^{\beta_i}$, hence 
\[
\varphi(\gamma) = \psi(\gamma)= \infty.
\]
Thus we can assume that $\beta_i$ is in the support of $\gamma$.\\
If $\gamma$ also contains some $\beta_l$ in its support satisfying $m_l < m_i$, then the two functions still coincide on $\gamma$. Finally, assume that the support of $\gamma$ only contains simple roots $\beta_l \in \Delta \backslash I$ with $m_l \geq m_i$. If $\gamma$ is short, then 
\[
N^{m_i-1}\cap U_{-\gamma}
\]
has height equal to $m_i$, hence $\varphi(\gamma) = \psi(\gamma)$ once again. On the other hand, if $\gamma$ is long then the intersection above 
has height $m_i-1$: this is the only case where the equality (\ref{uguaglianza}) cannot hold, because 
\[
\varphi(\gamma) = m_i \quad \text{while} \quad \psi(\gamma) = m_i-1.\]
Summarizing, the parabolic subgroup $Q^i$ is \emph{not} of standard type, and is in particular equal to 
\[N^{m_i-1}P^{\beta_i},\]
if and only if all positive long roots containing $\beta_i$ in their support also contain some $\beta_l$ with $m_l < m_i$.\\
%
\textbf{(3)} The last case to look at is when $G$ is of type $G_2$, the characteristic is $p=2$ and the Picard rank of $G/P$ is equal to $2$.\\
Let us assume that
\[
P = P^{\alpha_2}\cap G^mP^{\alpha_1}
\]
for some positive $m$; then by \Cref{lem:m_alpha} the intersection $Q^1 \cap U_{-\alpha_1}$ has height $m$, which means that the root subspace associated to $-\alpha_1$ is contained in $\Lie Q^1$. Hence by \Cref{h_l}, the latter must contain the Lie subalgebra $\mathfrak{l}$. This fact, together with the classification of parabolic subgroups with reduced part $P^{\alpha_1}$, given in \Cref{rank1_G}, implies that $L^{m-1} P^{\alpha_1}$ is contained in $Q^1$. Moreover, the equality 
\[
P = P^{\alpha_2} \cap L^{m-1} P^{\alpha_1}\]
holds, because intersecting both sides with $U_{-\gamma}$ for any positive root $\gamma \neq \alpha_1$ gives a trivial intersection. This shows that 
\[
Q^1= L^{m-1} P^{\alpha_1}.
\]
Thus, in this case we obtain a parabolic which is not of quasi-standard type.\\
On the other hand, if 
\[
P= P^{\alpha_1} \cap G^m P^{\alpha_2}
\]
for some $m$, then the intersection $Q^2 \cap U_{-\alpha_2}$ has height $m$, hence by \Cref{kerphi} we have necessarily that 
\[Q^2 = G^m P^{\alpha_2}\]
is standard.
\end{proof}

\begin{example}
    Assume that the rank $r$ is at least equal to two, that the integer $m_r$ is nonzero and that the simple root $\beta_r$ is long. Then $Q^r = G^{m_r}P^{\beta_r}$ is of standard type.
\end{example}
\begin{example}
    A few parabolic subgroups in rank two, with 
    \[P_{\text{red}}= P^\alpha \cap P^\beta,\]
    are listed below: in particular, these examples underline the importance of paying attention to the duality between the groups of type $B_n$ and $C_n$.
\begin{center}
\begin{tabular}{ |c|c|c|c|}
\hline
$P$ & type & $Q^\alpha$ & $Q^\beta$ \\
\hline
$P^{\alpha_{n-1}}\cap G^mP^{\alpha_n}$ & $B_n$ & $Q^{n-1}= P^{\alpha_{n-1}}$ & $Q^n= N^{m-1}P^{\alpha_n}$ \\
\hline
$P^{\alpha_1}\cap G^mP^{\alpha_n}$ & $C_n$ & $Q^1= P^{\alpha_1}$ & $Q^n = G^mP^{\alpha_n}$\\
\hline
$P^{\alpha_n} \cap G^mP^{\alpha_1}$ & $C_n$ & $Q^n= P^{\alpha_n}$ & $Q^1 = N^{m-1}P^{\alpha_1}$\\
\hline
$P^{\alpha_n} \cap G^mP^{\alpha_1}$ & $B_n$ & $Q^n= P^{\alpha_n}$ & $Q^1= G^mP^{\alpha_1}$ \\
\hline
\end{tabular}
\end{center}
\end{example}

\vskip 10 pt
\subsection{Existence of smooth contractions}
We address the question of whether a rational projective homogeneous variety admits a smooth contraction. We obtain in \Cref{fibrations} a structure result, saying that such a variety can be obtained by iterated Zariski locally trivial fibrations whose targets are flag varieties of Picard rank one. Let us start by considering morphisms of maximal relative Picard rank. 

\begin{lemma}
\label{nokernel}
    Let $X$ be a rational projective homogeneous variety. Then there is a semisimple, simply connected group $G$ and a parabolic subgroup $P$  of $G$, satisfying
    \[
    X=G/P,
    \] 
    such that for each simple factor of $G$, its intersection with $P$ does not contain the kernel of any isogeny with no central factor. 
\end{lemma}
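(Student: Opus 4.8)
The plan is to start from an arbitrary presentation $X = G/P$ with $G$ semisimple and simply connected (such a presentation always exists for a rational projective homogeneous variety) and then to replace $G$, factor by factor, by a suitable isogenous quotient until the required condition holds. First I would invoke \Cref{Gsimple} to write $G = G_1 \times \cdots \times G_n$ and $P = P_1 \times \cdots \times P_n$ with $P_i = P \cap G_i$ a parabolic subgroup of $G_i$. If some factor $G_i$ is entirely contained in $P$, then $G_i$ acts trivially on $X$ and may simply be deleted from $G$, leaving a smaller semisimple simply connected group with the same quotient; hence I may assume that every $P_i$ is a \emph{proper} parabolic of $G_i$.

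Next, fix a factor $G_i$. By \Cref{minimale} the kernels of isogenies with no central factor issued from $G_i$ form a totally ordered chain
\[
1 \subsetneq N \subsetneq G^1 \subsetneq N^1 \subsetneq \cdots \subsetneq G^m \subsetneq N^m \subsetneq \cdots.
\]
Since $P_i$ is a proper parabolic, its associated numerical function is finite on at least one positive root outside the Levi, so $P_i$ cannot contain $G_i^m$ for $m$ large. Consequently there is a largest member $K_i$ of this chain satisfying $K_i \subseteq P_i$ (possibly $K_i = 1$). Let $\xi_i \colon G_i \to G_i' \defeq G_i/K_i$ be the corresponding isogeny with no central factor. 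Because $K_i \subseteq P_i$, the image $P_i' \defeq \xi_i(P_i)$ is a parabolic subgroup of $G_i'$ and $\xi_i$ descends to an isomorphism $G_i/P_i \xrightarrow{\sim} G_i'/P_i'$.

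Setting $G' \defeq \prod_i G_i'$ and $P' \defeq \prod_i P_i'$, the product of these isomorphisms yields $X = G/P \cong G'/P'$. The group $G'$ is again semisimple and simply connected: by \Cref{factorisation} each $K_i$ is either a Frobenius kernel $G^a$ or a kernel $N^a$, and the corresponding quotient $G_i'$ is accordingly the Frobenius twist $G_i^{(a)}$ or the dual-type group $\overline{G_i}^{(a)}$, both of which are simple and simply connected. It then remains to verify that no $P_i'$ contains a nontrivial kernel of an isogeny with no central factor. Suppose $P_i'$ contained such a kernel $K'$. Its preimage $\xi_i^{-1}(K')$ strictly contains $K_i$ and is contained in $\xi_i^{-1}(P_i') = P_i$; moreover it again lies in the chain of \Cref{minimale} for $G_i$, since the kernels of no-central-factor isogenies of $G_i$ that contain $K_i$ correspond bijectively, via $\xi_i$, to those of $G_i'$. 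This contradicts the maximality of $K_i$, completing the argument.

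The main technical point is exactly this last compatibility: one must know that pulling back along $\xi_i$ identifies the chain of \Cref{minimale} for $G_i'$ with the terminal segment, above $K_i$, of the corresponding chain for $G_i$. This rests on the uniqueness of the factorisation in \Cref{factorisation} together with the fact that a composite of isogenies with no central factor again has no central factor, so that composing $\xi_i$ with an isogeny of $G_i'$ keeps us inside the distinguished chain. The two remaining supporting facts are easier: the existence of the maximal $K_i$ follows from properness of $P_i$ bounding the chain, and the preservation of semisimplicity and simple connectedness follows from the explicit description of the quotients $G_i'$ recalled after the definition of the very special isogeny.
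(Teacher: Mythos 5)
Your proof is correct and follows the same basic strategy as the paper: present $X = G'/P'$ with $G'$ semisimple and simply connected, then replace $G'$ by its quotient by a normal infinitesimal subgroup contained in $P'$. The difference lies in which subgroup one divides by. The paper takes $H$ to be the subgroup generated by all normal noncentral subgroups of \emph{height one} of $G'$ contained in $P'$ and quotients once; you work factor by factor and divide by the \emph{maximal} element $K_i$ of the totally ordered chain of \Cref{minimale} contained in $P_i$, whose existence you correctly deduce from the properness of $P_i$ (the heights $\varphi(\alpha)$ for $\alpha \notin I$ bound the chain). Your choice makes the final verification cleaner: by the uniqueness in \Cref{factorisation} together with $\pi_{\overline{G}} \circ \pi_G = F_G$, the kernels of no-central-factor isogenies of $G_i/K_i$ pull back to exactly the terminal segment of the chain above $K_i$, so maximality of $K_i$ immediately shows $P_i' = P_i/K_i$ contains none of them. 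With the paper's height-one choice this conclusion is less immediate — if, say, $G^2 \subseteq P'$ then $H = G^1$ and $P'/H$ still contains the Frobenius kernel of $G'/H$, so the construction must implicitly be iterated — whereas your version absorbs that iteration into a single quotient. In short, same idea, but your execution is the more careful of the two.
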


\begin{proof}
    Let us write the variety $X$ as
    \[
    X= G^\prime /P^\prime
    \]
     for some semisimple and simply connected group $G^\prime$ and a parabolic subgroup $P^\prime$ of $G^\prime$. Let $H$ be the subgroup of $G^\prime$ generated by all normal noncentral subgroups of height one of $G^\prime$ which are contained in $P^\prime$. Then we can write
     \[
     X = (G^\prime/H) / (P^\prime/H) =: G/P.
     \]
     The group $G$ is still simply connected and the parabolic subgroup $P$ is as wanted.
\end{proof}
The question of the existence of a smooth contraction is now related to finding some simple root $\alpha$, not belonging to the Levi subgroup of the reduced part of $P$, such that the stabiliser  $Q^\alpha$ is reduced (hence, smooth).

\begin{remark}
\label{betaone}
If $P^\prime$ is a parabolic subgroup of quasi-standard type of $G^\prime$, then \Cref{nokernel} can be illustrated in a simpler way as follows. Let 
\[
    P^\prime = \bigcap_{i=1}^r (\ker \xi_i) P^{\beta_i},
    \]
    where $r$ is the Picard rank of $X$ and the $\xi_i$ are isogenies with no central factor, uniquely determined as in 
 \Cref{kerphi}. Moreover, let us order them such that
 \[
 \ker \xi_1 \subseteq \ldots \subseteq \ker \xi_r,
 \]
 which can be done thanks to \Cref{minimale}. Then the subgroup $H$ of $G$ generated by all normal noncentral subgroups of height one contained in $P$ is 
 \[
 H = \ker \xi_1.
 \]
\end{remark}

\begin{proposition}
    Let $G$ be simply connected and $X=G/P$ be such that $P$ contains no kernel of isogenies with no central factor. \\
    Then there is a simple root $\alpha$ not belonging to the Levi subgroup of $P_{\text{red}}$ such that
    \[
    Q^\alpha = P^\alpha
    \]
    if and only if $P$ is of quasi-standard type.
\end{proposition}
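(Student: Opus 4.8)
The plan is to establish the two implications separately; throughout I take $G$ to be simple, as is assumed from \Cref{Gsimple} onward.

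For the direction ``$P$ of quasi-standard type $\Rightarrow$ some $Q^\alpha=P^\alpha$'', write $P=\bigcap_{\alpha\in\Delta\backslash I}(\ker\xi_\alpha)P^\alpha$ with the $\xi_\alpha$ isogenies having no central factor. By \Cref{minimale} the kernels $\ker\xi_\alpha$ are totally ordered by inclusion, so there is a smallest one, say $\ker\xi_{\alpha_0}$, and then $\ker\xi_{\alpha_0}=\bigcap_\alpha\ker\xi_\alpha\subseteq\bigcap_\alpha(\ker\xi_\alpha)P^\alpha=P$. The hypothesis that $P$ contains no kernel of an isogeny with no central factor forces $\ker\xi_{\alpha_0}=1$, that is, $\xi_{\alpha_0}$ is an isomorphism; hence $P\subseteq P^{\alpha_0}$ and $Q^{\alpha_0}=\langle P,P^{\alpha_0}\rangle=P^{\alpha_0}$, which exhibits the required simple root.

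For the converse I would start from $P=\bigcap_{\alpha\in\Delta\backslash I}Q^\alpha$ (\Cref{mainnnn}). If $G$ is not of type $G_2$ in characteristic two, then \Cref{kerphi} shows that every $Q^\alpha$ is of the form $(\ker\xi_\alpha)P^\alpha$, so $P$ is automatically of quasi-standard type and there is nothing more to prove (in this range the hypothesis on $\alpha$ is not even needed). The entire content therefore lies in type $G_2$, characteristic two, where by \Cref{rank1_G} and \Cref{h_l} the only contraction target that can fail to be quasi-standard is $Q^{\alpha_1}$, with $\alpha_1$ the short simple root, and where it may equal one of the exotic subgroups $P_{\mathfrak l}$ or $P_{\mathfrak h}$. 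Here I distinguish according to the witnessing root. If $Q^{\alpha_1}=P^{\alpha_1}$, then the only possibly exotic factor is in fact reduced, so all $Q^\alpha$ are of the form $(\ker\xi_\alpha)P^\alpha$ and $P$ is quasi-standard. If instead $Q^{\alpha_2}=P^{\alpha_2}$ (with $\alpha_2$ long), then $P\subseteq P^{\alpha_2}$, whence $U_{-\gamma}\cap P$ is trivial for every positive root $\gamma\neq\alpha_1$; setting $l\defeq\varphi(\alpha_1)$ and comparing the numerical function of $P$ with that of $G^lP^{\alpha_1}\cap P^{\alpha_2}$ through \Cref{rosa_nero_verde}(b) shows that the two subgroups coincide, so $P$ is of standard type. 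The rank-one subcases are immediate, since there $P$ equals the single reduced $Q^\alpha$.

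The step I expect to be the main obstacle is this last one: verifying, in type $G_2$ and characteristic two, that an a priori exotic $Q^{\alpha_1}$ is nonetheless compatible with $P$ being standard exactly when some $Q^\alpha$ is reduced. Concretely this rests on the identity $P_{\mathfrak l}\cap P^{\alpha_2}=G^1P^{\alpha_1}\cap P^{\alpha_2}$ (and the analogue for $P_{\mathfrak h}$, which collapses to the Borel), checked on root subgroups via \Cref{lem:m_alpha} and the criterion of \Cref{rosa_nero_verde}(b); conversely the genuinely non-quasi-standard parabolics $P_{\mathfrak l}\cap G^sP^{\alpha_2}$ and $P_{\mathfrak h}\cap G^sP^{\alpha_2}$ with $s\geq 1$ are precisely those for which no $Q^\alpha$ is reduced, in agreement with \Cref{Qforstandard}.
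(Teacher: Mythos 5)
Your proof is correct and follows essentially the same route as the paper: the forward direction is exactly the paper's use of the total ordering of kernels (\Cref{betaone}) to extract a trivial $\ker\xi_{\alpha_0}$ and hence $P\subseteq P^{\alpha_0}$, and your converse is the contrapositive of the paper's Step (2), reducing to type $G_2$ in characteristic two via \Cref{kerphi} and handling the exotic cases there. The only cosmetic difference is that you case on which $Q^\alpha$ is reduced rather than on which exotic form $P$ takes, but the underlying inputs and computations (e.g. $P_{\mathfrak{l}}\cap P^{\alpha_2}=G^{1}P^{\alpha_1}\cap P^{\alpha_2}$ and $P_{\mathfrak{h}}\cap P^{\alpha_2}=B$) coincide with the paper's.
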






\begin{proof}
\textbf{(1)}: Let us assume that $P$ is of quasi-standard type (which we recall is always the case except for a group of type $G_2$ in characteristic two). 
By \Cref{betaone}, we can write
\[
P = P^{\alpha} \cap (\ker \xi)P^\prime
\]
for some simple root $\alpha$ and an isogeny $\xi$ with no central factor. Then $Q^\alpha$ is equal to $P^\alpha$, which in particular means that
\[
f_\alpha \colon X = G/P \longrightarrow G/P^\alpha
\]
is a smooth contraction.\\
\textbf{(2)}: Let us assume that $G$ is of type $G_2$, that $p=2$ and that $P$ is not of quasi-standard type. Since by assumption $P$ does not contain the Frobenius kernel of $G$, it must be of the form
    \[
    P = P_{\mathfrak{h}} \cap G^s P^{\alpha_2} \quad \text{or} \quad P = P_{\mathfrak{l}} \cap G^s P^{\alpha_2}
    \]
    for some non-negative integer $s$. In both cases, we have that the height of the intersection of $Q^2$ and $U_{-\alpha_2}$ is equal to $s$.
    Hence 
    \[
    Q^2 = G^s P^{\alpha_2}
    \]
    is smooth if and only if $s$ is zero: this cannot happen because
    \[
    P = P_{\mathfrak{h}} \cap P^{\alpha_2} = P^{\alpha_1}\cap P^{\alpha_2}=  B \quad \text{and} \quad P = P_{\mathfrak{l}} \cap P^{\alpha_2} = G^1 P^{\alpha_1} \cap P^{\alpha_2}
    \]
    are both of standard type, which contradicts our assumption.\\
    Thus we have $s\geq1$. Let us consider the subgroup $Q^1$: in the first case, the height of the intersection of $Q^1$ and $U_{-2\alpha_1-\alpha_2}$ is equal to one, 
    which implies that $\Lie Q^1$ contains $\mathfrak{h}$ and finally that
    \[
    Q^1 = P_{\mathfrak{h}}
    \]
    is nonreduced. Analogously, in the second case the height of the intersection of $Q^1$ and $U_{-\alpha_1}$ is equal to one, 
    which implies that $\Lie Q^1$ contains $\mathfrak{l}$ and finally that
    \[
    Q^1 = P_{\mathfrak{l}}
    \]
    is again non-reduced.
\end{proof}

\begin{corollary}
\label{cor}
    Over an algebraically closed field of characteristic $p=2$, the isomorphism classes of $G_2$-homogeneous varieties with Picard group of rank $2$ are in one-to-one correspondence with the following parabolic subgroups:
    \begin{enumerate}[(a)]
        \item $P^{\alpha_1}\cap P^{\alpha_2}= B$;
        \item $G^m P^{\alpha_1} \cap P^{\alpha_2}$, for $m\geq 1$;
        \item $P^{\alpha_1} \cap G^m P^{\alpha}$, for $m\geq 1$;
        \item $P_{\mathfrak{h}} \cap G^m P^{\alpha_2}$, for $m \geq 1$;
        \item $P_{\mathfrak{l}}\cap G^m P^{\alpha_2}$, for $m\geq 1$.
    \end{enumerate}
\end{corollary}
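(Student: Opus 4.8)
The plan is to obtain this as a bookkeeping consequence of the local classification of \Cref{exotic}, with the rigidity of the family of contractions $f_\alpha$ of (\ref{f_alpha}) as the key input. A $G_2$-homogeneous variety $X$ of Picard rank two is $X = G/P$ with $G$ of type $G_2$ (which is at once simply connected and adjoint) and $P_{\text{red}} = B$, since $\rank \Pic(X) = |\Delta \backslash I|$ forces $I = \emptyset$. Using \Cref{nokernel} I would fix the realization so that $P$ contains no kernel of an isogeny with no central factor; since the smallest such kernel is $N$, this gives $N \not\subseteq P$ and a fortiori $G^1 \not\subseteq P$, i.e.\ (\ref{no_longest}). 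The gain is that this realization is canonical: by \cite[Section 3.2]{Maccan} the pair of contractions, and hence $G$ together with the stabilizer $P$, is recovered from $X$ up to the swap $\alpha_1 \leftrightarrow \alpha_2$, so an abstract isomorphism of two such varieties is equivariant and matches stabilizers up to this permutation.

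Next I would extract the list by running the case analysis of \Cref{exotic} under $G^1 \not\subseteq P$: it shows $P$ is exactly one of $B$, $G^m P^{\alpha_1}\cap P^{\alpha_2}$, $P^{\alpha_1}\cap G^m P^{\alpha_2}$, $P_{\mathfrak h}\cap G^m P^{\alpha_2}$ or $P_{\mathfrak l}\cap G^m P^{\alpha_2}$. The requirement $m \geq 1$ in the last four families removes the overlaps that appear at $m = 0$, for instance $P_{\mathfrak h}\cap P^{\alpha_2} = B$ and $P_{\mathfrak l}\cap P^{\alpha_2} = G^1 P^{\alpha_1}\cap P^{\alpha_2}$. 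This reproduces precisely the families (a)--(e) and shows that every such variety arises from one of them, giving the surjectivity of the correspondence.

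For injectivity I would compute, family by family, the intrinsic unordered pair $\{Q^1, Q^2\} = \{\langle P, P^{\alpha_1}\rangle, \langle P, P^{\alpha_2}\rangle\}$: one finds $\{P^{\alpha_1}, P^{\alpha_2}\}$ for (a); $\{L^{m-1}P^{\alpha_1}, P^{\alpha_2}\}$ for (b), using \Cref{Qforstandard}(3); $\{P^{\alpha_1}, G^m P^{\alpha_2}\}$ for (c); and $\{P_{\mathfrak h}, G^m P^{\alpha_2}\}$, respectively $\{P_{\mathfrak l}, G^m P^{\alpha_2}\}$, for (d) and (e), exactly as computed in the proof of the preceding Proposition on smooth contractions. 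These five pairs are pairwise distinct as pairs of subgroups containing $B$: the reduced parts $P^{\alpha_1}\neq P^{\alpha_2}$ separate the two sides, the thickenings $L^{m-1}P^{\alpha_1}$, $P_{\mathfrak h}$ and $P_{\mathfrak l}$ of $P^{\alpha_1}$ are separated by their Lie algebras through \Cref{h_l}, and the subgroups $G^m P^{\alpha_2}$ are separated by $m$ through \Cref{lem:m_alpha}. Hence the five families yield pairwise non-isomorphic varieties.

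The main obstacle is this last step, and in particular the necessity of arguing with the $G$-equivariant pair $\{Q^1, Q^2\}$ rather than with the bare target varieties. Indeed Frobenius thickening yields $G/(G^m P^{\alpha_2}) \cong G/P^{\alpha_2}$ abstractly, so the integer $m$ is invisible on the underlying varieties of the targets and is recorded only by the equivariant structure; this is exactly why the canonicity of the realization, secured in the first step via \cite[Section 3.2]{Maccan}, is indispensable.
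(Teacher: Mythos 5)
Your surjectivity argument (running the case analysis of \Cref{exotic} under the normalisation of \Cref{nokernel}) matches the paper, and your computation of the pairs $\{Q^1,Q^2\}$ for the five families is correct. The gap is in the step you yourself flag as indispensable, the ``canonicity of the realization''. What \cite[Section 3.2]{Maccan} gives is that the contraction \emph{morphisms} $f_\alpha \colon X \to Y_\alpha$ are determined by the abstract variety $X$; it does not give that the group $G$, its action, or the stabilisers $P$ and $Q^\alpha$ are determined. Those are only defined relative to a choice of a faithful action on $X$, and an abstract isomorphism $G/P_1 \cong G/P_2$ a priori transports the second action to a possibly different faithful action of a possibly different semisimple group on the same underlying variety. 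Your own closing observation --- that $G/(G^m P^{\alpha_2}) \cong G/P^{\alpha_2}$, so the bare targets of the contractions do not see $m$ --- shows exactly why knowing the morphisms of varieties is not enough: the invariant you use for injectivity, the unordered pair of subgroup schemes $\{Q^1,Q^2\}$, is an invariant of the \emph{equivariant} structure, which you have not shown to be unique. The argument is circular at precisely this point.

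The paper closes this gap with an extra ingredient you do not have: writing $X = G/P = G^\prime/P^\prime$ with $G^\prime$ adjoint semisimple acting faithfully, it applies Blanchard's Lemma (\Cref{blanchard}) to push the $G^\prime$-action down along $f_{\alpha_2}$ to $Y \simeq G/P^{\alpha_2}$, then invokes Demazure's theorem that $\underline{\Aut}^0_Y = G$ to conclude $G^\prime = G$; since $G_2$ has no outer automorphisms, $P$ and $P^\prime$ are then conjugate. Injectivity follows at once, because the five families of subgroups all contain $B$ and are pairwise distinct, hence pairwise non-conjugate. To repair your proof you would need to insert this (or an equivalent) identification of the acting group before reading off the pair $\{Q^1,Q^2\}$; once it is in place, the $\{Q^1,Q^2\}$ computation becomes redundant, since conjugacy of the stabilisers already separates the families.
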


\begin{remark}
     In the above list, $(a)$, $(b)$ and $(c)$ are of quasi-standard type, while $(d)$ and $(e)$ are not. This leads to different geometric properties of the homogeneous spaces: for instance, the variety with stabiliser $(a)$ has two smooth contractions, those in cases $(b)$ and $(c)$ have one, while those in cases $(d)$ and $(e)$ have none; see \Cref{fibrations} below.
\end{remark}

Before proving \Cref{cor}, let us recall what we mean by automorphism group and recall the statement of Blanchard's Lemma; for the latter, see \cite[$7.2$]{Brion17}.\\
For a proper algebraic scheme $X$ over a perfect field $k$, the functor 
\[
\underline{\Aut}_X \colon (\mathbf{Sch}/k)_{\text{red}} \longrightarrow \mathbf{Grp}, \quad T \longmapsto \Aut_T(X_T),
\]
sending a reduced $k$-scheme $T$ to the group of automorphisms of $T$-schemes of $X\times_k T$, is represented by a reduced group scheme $\underline{\Aut}_X$ which is locally of finite type over $k$: see \cite[Theorem 3.6]{MatsumuraOort}. 
We denote as \[\underline{\Aut}_X^0\] its identity component, which is a smooth connected algebraic group.

\begin{theorem}[Blanchard's Lemma]
\label{blanchard}
Let $f \colon X \rightarrow Y$ be a contraction between projective varieties over $k$. Assume 
that $X$ is equipped with an action of a connected algebraic group $G$. Then there exists a unique $G$-action on $Y$ such that the morphism $f$ is $G$-equivariant.
\end{theorem}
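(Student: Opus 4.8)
The plan is to reduce the entire statement to a single descent problem and to settle that problem with a rigidity argument. Write $a\colon G\times X\to X$ for the action. Since $f$ is a contraction we have $\mathcal{O}_Y\xrightarrow{\sim}f_\ast\mathcal{O}_X$, so $f$ is surjective with geometrically connected fibres. The whole theorem follows once I produce a morphism $b\colon G\times Y\to Y$ satisfying
\[
f\circ a = b\circ(\id_G\times f),
\]
for then equivariance \emph{is} this identity, and the two action axioms together with uniqueness are formal consequences checked at the end. So the real goal is: \emph{$f\circ a$ factors through $\id_G\times f$.}

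First I would record the descent lemma governing such factorisations: if $h\colon V\to W$ is proper with $h_\ast\mathcal{O}_V=\mathcal{O}_W$ and $u\colon V\to T$ is any morphism to a separated scheme that is constant on each fibre $h^{-1}(w)$ (as a topological set), then $u$ factors uniquely as $u=\bar u\circ h$. The continuous map $\bar u$ exists because $u$ contracts the fibres, and the sheaf map comes for free from $\mathcal{O}_T\xrightarrow{u^\#}u_\ast\mathcal{O}_V=\bar u_\ast(h_\ast\mathcal{O}_V)=\bar u_\ast\mathcal{O}_W$; uniqueness holds because $h$ is surjective and schematically dominant. I apply this with $h=\id_G\times f$. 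That $h$ again satisfies $h_\ast\mathcal{O}_{G\times X}=\mathcal{O}_{G\times Y}$ follows from flat base change along the cartesian square whose horizontal maps $p\colon G\times X\to X$ and $q\colon G\times Y\to Y$ are the projections, flat because they are the base change of the flat morphism $G\to\Spec k$: one gets $q^\ast f_\ast\mathcal{O}_X=(\id_G\times f)_\ast\, p^\ast\mathcal{O}_X$, and the left-hand side is $q^\ast\mathcal{O}_Y=\mathcal{O}_{G\times Y}$.

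The heart of the argument, and the step I expect to be the main obstacle, is verifying that $f\circ a$ is genuinely constant on the fibres of $\id_G\times f$; equivalently, that for every geometric point $y_0$ of $Y$ and every $g$, the map $x\mapsto f(g\cdot x)$ is constant on the fibre $X_{y_0}\defeq f^{-1}(y_0)$. Here I would invoke the rigidity lemma. Consider
\[
c\colon G\times X_{y_0}\longrightarrow Y,\qquad c(g,x)=f(g\cdot x).
\]
The scheme $X_{y_0}$ is proper (closed in the proper variety $X$) and geometrically connected, and $G$ is connected; at $g=e$ we have $c(e,x)=f(x)=y_0$, so $c$ contracts the slice $\{e\}\times X_{y_0}$ to the single point $y_0$. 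After replacing $X_{y_0}$ by its reduction (harmless, as I only need set-theoretic constancy), the rigidity lemma forces $c$ to contract every slice $\{g\}\times X_{y_0}$ to a point, which is exactly the required fibrewise constancy. The two hypotheses that make rigidity available are precisely the completeness of the fibres (from properness of $f$) and the connectedness of $G$; without connectedness the conclusion genuinely fails, which is why it is indispensable.

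Finally I would verify that the $b$ produced is an action making $f$ equivariant, all via the uniqueness clause of the descent lemma. For unitality, $b\circ(e\times\id_Y)$ and $\id_Y$ agree after precomposition with the surjective, schematically dominant $f$, since $b(e,f(x))=f(e\cdot x)=f(x)$; hence they coincide. For associativity, the two morphisms $b\circ(\id_G\times b)$ and $b\circ(m\times\id_Y)$ on $G\times G\times Y$ agree after precomposition with $\id_{G\times G}\times f$, because on $X$ both send $(g_1,g_2,x)$ to $f\bigl(g_1\cdot(g_2\cdot x)\bigr)=f\bigl((g_1g_2)\cdot x\bigr)$ by the action axiom for $a$; as $\id_{G\times G}\times f$ is an epimorphism, associativity of $b$ follows. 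Uniqueness of $b$ itself is immediate from the uniqueness in the descent lemma, and the defining identity is the asserted equivariance. This completes the plan.
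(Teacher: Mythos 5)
The paper offers no proof of this statement: it is recalled as a known result with a pointer to \cite[7.2]{Brion17}, so there is no in-house argument to compare against. Your proposal is correct and in fact reconstructs the standard proof from that cited source --- factoring $f\circ a$ through $\id_G\times f$ via the descent lemma for proper morphisms $h$ with $h_\ast\mathcal{O}_V=\mathcal{O}_W$ (with the flat base change step done properly), obtaining fibrewise constancy from the rigidity lemma applied to $G\times f^{-1}(y_0)$ using connectedness of $G$ and properness and connectedness of the fibre, and checking the action axioms and uniqueness through the fact that $f$ and $\id_{G\times G}\times f$ are surjective and schematically dominant, hence epimorphisms.
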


\begin{proof} (of \Cref{cor}). 
    Let us write
    \[
    X = G/P = G^\prime /P^\prime,
    \]
    where $G$ is of type $G_2$, $P$ is a parabolic subgroup with reduced part equal to the Borel $B$, and $G^\prime$ is an adjoint semisimple group, such that both actions on $X$ are faithful. By \Cref{blanchard} applied to the $G^\prime$-action, the contraction
    \[
    f_{\alpha_2} \colon X \longrightarrow G/Q^{\alpha_2} \simeq G/P^{\alpha_2} =:Y
    \]
    induces an action of $G^\prime$ onto $Y$. 
    By \cite[Theorem 1]{Demazure}, the reduced automorphism group
    \[
     \underline{\Aut}_Y^0
     \]
     of $Y$ is $G$; in particular, the only semisimple group which can act faithfully on $Y$ is the group $G$, which implies $G^\prime= G$. 
    We can thus deduce that $P$ and $P^\prime$ are conjugated in $G$, because the group of type $G_2$ has no outer automorphisms. Thanks to our classification of parabolic subgroups, we get the desired list above.
\end{proof}


We now associate to any homogeneous space $X$ - with stabiliser of quasi-standard type - a canonical fibration which, whenever the stabiliser of $X$ is non-reduced, is realised as the smooth contraction with minimal relative Picard rank.\\
Here we focus on isomorphism classes of varieties rather than on conjugacy classes of parabolic subgroups, so we start by applying \Cref{nokernel}. Then we repeat this construction to build from $X$ a finite sequence of Zariski locally trivial contractions, whose targets are generalised flag varieties of Picard rank one. Let us start by noticing that the quotient morphism
\[
G \longrightarrow G/P
\]
is Zariski locally trivial (and in particular, smooth) if and only if the parabolic subgroup is reduced.

\begin{remark}
    Let 
    \[
    X=G/P
    \]
    with $P$ a parabolic subgroup of quasi-standard, such that for each simple factor of $G$, its intersection with $P$ does not contain the kernel of any isogeny with no central factor. Then there is a unique minimal reduced parabolic subgroup of $G$ containing $P$, given by
    \[
    P^{\sm} = \bigcap \{P^\alpha \colon \alpha \in \Delta \backslash I \, \text{ and } \, Q^\alpha=P^\alpha\}.
    \]
    Moreover, $P$ can be written in a unique way as 
    \begin{align}
    \label{intersezione}
    P = P^{\sm} \cap (\ker \xi) P^\prime,
    \end{align}
    where $\xi$ is an isogeny with no central factor and $P^\prime$ is a parabolic subgroup with reduced part
    \[
    P^\prime_{\text{red}} = \bigcap \{P^\alpha \colon \alpha \in \Delta \backslash I \, \text{ and } \, Q^\alpha \neq P^\alpha\},
    \]
    such that $P^\prime$ does not contain the kernel of any isogeny with no central factor.
\end{remark}

\begin{proposition}
\label{fibrations}
    Let $X$ be a homogeneous projective variety of Picard rank $r\geq 2$ whose automorphism group has no $G_2$ factor if the characteristic is $2$.\\ 
    There is a finite sequence of Zariski locally trivial contraction morphisms
    \[
    g_s \colon X_s \longrightarrow Y_{s}, \quad 1\leq s \leq r
    \]
    such that $X_1=X$, each $Y_s$ is a rational projective homogeneous variety of Picard rank one and $X_{s+1}$ is the fiber of $g_s$.
\end{proposition}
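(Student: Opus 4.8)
The plan is to argue by induction on the Picard rank $r$, peeling off one Picard-rank-one factor at each step by means of a smooth contraction. Since the automorphism group of $X$ is semisimple and, by hypothesis, has no $G_2$ factor in characteristic two, I would first realise $X = G/P$ with $G$ semisimple simply connected isogenous to the simply connected cover of that automorphism group; in particular $G$ has no $G_2$ factor in characteristic two, so $P$ is of quasi-standard type. Applying \Cref{nokernel} I may further assume that, on each simple factor, $P$ contains the kernel of no isogeny with no central factor. Writing $\Delta \backslash I = \{\beta_1,\ldots,\beta_r\}$ ordered by $\ker\xi_1 \subseteq \cdots \subseteq \ker\xi_r$ as in \Cref{betaone}, the subgroup $H = \ker\xi_1$ is then trivial, so $\xi_1 = \id$ and $Q^{\beta_1} = P^{\beta_1}$ is reduced; equivalently, the preceding proposition on the existence of smooth contractions yields a simple root $\alpha := \beta_1 \in \Delta \backslash I$ with $Q^\alpha = P^\alpha$. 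I then set $g_1 := f_\alpha \colon X \to Y_1 := G/P^\alpha$, which, as $P^\alpha$ is a maximal reduced parabolic, is a smooth contraction onto a rational projective homogeneous variety of Picard rank one.

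Next I would show that $g_1$ is Zariski locally trivial and identify its fibre. Because $P^\alpha$ is reduced, the quotient $G \to G/P^\alpha$ admits Zariski-local sections over the $G$-translates of the open Bruhat cell, hence is a Zariski-locally-trivial $P^\alpha$-torsor; realising $G/P$ as the associated fibre bundle $G \times^{P^\alpha}(P^\alpha/P)$ then exhibits $g_1$ as a Zariski-locally-trivial fibration with fibre $X_2 := P^\alpha/P$. Since $P \supseteq B$ contains the unipotent radical $R_u(P^\alpha)$, the Levi decomposition $P^\alpha = R_u(P^\alpha) \rtimes L$ gives $X_2 \cong L/(L\cap P)$, where $L\cap P$ is a (possibly non-reduced) parabolic subgroup of the reductive group $L$. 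Thus $X_2$ is again a rational projective homogeneous variety, and its Picard rank is $r-1$, only the contraction direction $\alpha$ having been removed.

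It then remains to feed $X_2$ back into the construction, which forces one to check that the hypotheses are inherited. After passing to the simply connected cover of the derived group of $L$ and splitting off the simple factors by \Cref{Gsimple}, the groups acting on $X_2$ are Levi factors of $G$, obtained by deleting the node $\alpha$ from the Dynkin diagram. As a triple edge occurs only in type $G_2$, no proper Levi of a simple group other than $G_2$ is itself of type $G_2$; since $G$ carries no $G_2$ factor in characteristic two, neither do the factors acting on $X_2$, so $L\cap P$ is again of quasi-standard type and the inductive hypothesis applies to $X_2$. Iterating produces the sequence $g_s \colon X_s \to Y_s$ with $X_{s+1}$ the fibre of $g_s$, terminating once the Picard rank reaches one, where $g_s$ is the identity onto a Picard-rank-one variety and the subsequent fibre is a point.

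The genuinely delicate points, which I expect to be the main obstacle, are the identification of the fibre $P^\alpha/P$ with the homogeneous space $L/(L\cap P)$ and the verification that the no-$G_2$-in-characteristic-two condition is inherited by the Levi factors at every stage; once these are granted, the Zariski local triviality follows formally from that of $G \to G/P^\alpha$, and the strict decrease of the Picard rank guarantees that the induction terminates after exactly $r$ steps.
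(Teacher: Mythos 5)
Your proposal is correct and follows essentially the same route as the paper: apply \Cref{nokernel} and the preceding proposition to find a simple root $\alpha$ with $Q^\alpha = P^\alpha$ reduced, take the Zariski locally trivial contraction $f_\alpha$ with fibre $P^\alpha/P$, re-realise that fibre as a homogeneous space under a smaller semisimple simply connected group, and iterate until the Picard rank drops to one. The only cosmetic difference is that you identify the fibre via the Levi decomposition $P^\alpha = R_u(P^\alpha)\rtimes L$, whereas the paper passes to the image of $P^\alpha$ in the automorphism group of the fibre; your explicit checks of Zariski local triviality and of the inheritance of the no-$G_2$ hypothesis are points the paper leaves implicit.
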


\begin{proof}
    Let us write 
    \[
    X=G/P
    \]
    with $P$ a parabolic subgroup of quasi-standard type; thanks to \Cref{nokernel}, assume that there is some simple root $\alpha$ such that $Q^\alpha$ is smooth.  
    Then the contraction morphism
    \[
    f_\alpha \colon X \longrightarrow G/P^\alpha =: Y_1
    \]
    is a Zariski locally trivial contraction, with smooth, connected and projective fiber equal to 
    \[
    X^\prime \defeq P^\alpha /P,
    \]
    The idea is now to replace $X$ by $X^\prime$; in order to do this, let us consider the image 
    \[
    G^{\alpha} \defeq \mathrm{im} (P^\alpha \longrightarrow \underline{\Aut}_{X_1})
    \]
    which is a semisimple adjoint group. Let $\overline{P}$ be the image of $P$, then let $G^\prime$ and $P^\prime$ be respectively the simply connected cover of $G^\alpha$ and the preimage of $\overline{P}$ in $G^\prime$. Then we have
    \[
    X^\prime = G^\alpha/\overline{P} = G^\prime/P^\prime.
    \]
    This allows to start again by replacing the variety $X$ by $X^\prime$; since the Picard rank of the fiber decreases by one at each step, this process terminates in $r$ steps.
    \end{proof}

    Let us mention that if $p>3$ then for any parabolic subgroup $P$ the Chow motives of $G/P$ and of $G/P_{\text{red}}$ are isomorphic, as proven in \cite[Theorem 1.3]{Srinivasan}; we wonder whether this result could be recovered from \Cref{fibrations}.


\subsection{Fano homogeneous spaces}
Over an algebraically closed field of characteristic zero, every $G/P$ is Fano; moreover, there are only a finite number with fixed dimensions.\\
Both of these statements prove false in positive characteristics.

\begin{example}
\label{esempio}
Let us consider the following incidence varieties in the product of two projective planes:
\[
X_m \defeq \{x_0^{p^m} y_0+x_1^{p^m}y_1+x_2^{p^m}y_2 =0\} \subset \proj^2 \times \proj^2,
\]
where $m \geq 0$. The group $\SL_3$ acts on the first factor with a twisted action via an $m$-th iterated Frobenius morphism, and with the standard action on the second factor, thus preserving $X_m$. These form an infinite family of non-isomorphic rational projective homogeneous spaces of dimension $3$, which are not Fano if $p^m >3$.
\end{example}

Nevertheless, the following finiteness property still holds: only a finite number of homogeneous spaces of fixed dimension are Fano varieties. The idea of the proof is that, except for a finite number of cases, there is at least one \emph{incidence relation} in the embedding
\[
f \colon X=G/P \longhookrightarrow \prod_{\alpha \in \Delta\backslash I} G/Q^\alpha
\]
which is \emph{twisted too much} via the kernel of an isogeny with no central factor.\\
Since $G$ is simply connected, any line bundle on $G/P$ admits a unique $G$-linearisation. Then one can associate to a line bundle $L$ a unique character $\lambda$, given by restricting the $G$-action to the fiber over the base point.

\begin{lemma}
\label{ample}
    A homogeneous line bundle $L$ on $G/P$ with associated character $\lambda$ is ample if and only if 
    \[
    (\lambda,\alpha) > 0 \quad \text{for all } \alpha \in \Delta \backslash I.
    \]
\end{lemma}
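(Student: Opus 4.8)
The plan is to descend the statement to the reduced flag variety $G/P_{\text{red}} = G/P_I$, where the criterion is classical, by pulling back along the natural quotient morphism. Since $P_{\text{red}}$ is a closed subgroup of $P$ and the quotient $P/P_{\text{red}}$ is infinitesimal (by the second isomorphism in (\ref{product}), namely $P = U_P^- \times P_{\text{red}}$ with $U_P^-$ unipotent and infinitesimal), the induced morphism
\[
\pi \colon G/P_{\text{red}} \longrightarrow G/P
\]
is finite, surjective and purely inseparable; in particular it is a universal homeomorphism between projective varieties.

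First I would identify the pullback $\pi^\ast L$. Writing $L = \mathcal{L}(\lambda)$ for the uniquely $G$-linearised line bundle associated to the character $\lambda$ of $P$ via the usual associated-bundle construction, functoriality of this construction shows that $\pi^\ast L$ is the line bundle on $G/P_{\text{red}}$ attached to the restriction of $\lambda$ to $P_{\text{red}}$; as a weight of $T$ this restriction is again $\lambda$, so $\pi^\ast L = \mathcal{L}(\lambda)$ on the reduced flag variety $G/P_I$. Next I would invoke the standard fact that ampleness is both preserved and reflected by a finite surjective morphism of projective varieties: a line bundle on the base is ample if and only if its pullback is ample (one direction because $\pi$ is finite, the other by descent of ampleness along finite surjective morphisms). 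Hence $L$ is ample on $G/P$ if and only if $\mathcal{L}(\lambda)$ is ample on $G/P_I$. Finally, the classical ampleness criterion on the reduced flag variety gives that $\mathcal{L}(\lambda)$ is ample if and only if $\langle \lambda, \alpha^\vee\rangle > 0$ for every $\alpha \in \Delta \backslash I$; since for a simple root $\alpha$ the coroot $\alpha^\vee$ is a positive multiple of $\alpha$ under the Weyl-invariant form, this is equivalent to $(\lambda,\alpha) > 0$ for all $\alpha \in \Delta \backslash I$, which is the assertion.

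The step requiring the most care is the middle one: verifying the equivalence for ampleness under the purely inseparable morphism $\pi$ in \emph{both} directions simultaneously, together with pinning down the sign conventions so that $\pi^\ast L$ corresponds to exactly the weight $\lambda$ rather than to a twist of it. Everything else is either an immediate consequence of (\ref{product}) or a direct appeal to the classical reduced case.
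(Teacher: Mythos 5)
Your argument is correct and is essentially the proof the paper gives: descend along the finite morphism $G/P_{\text{red}} \to G/P$, identify the pullback as the bundle attached to the restriction of $\lambda$ to $P_{\text{red}}$, use that ampleness is preserved and reflected by finite surjective morphisms, and conclude by the classical criterion on the reduced flag variety (cited from Jantzen). The only difference is that you spell out the purely inseparable nature of the morphism and the root/coroot normalisation, which the paper leaves implicit.
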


\begin{proof}
If $P$ is reduced, this holds by \cite[II.8.5, II.4.4]{Jan}. In general, let us consider the finite morphism
\[
\sigma \colon G/P_{\text{red}} \longrightarrow G/P.
\]
The character associated to $\sigma^\ast L$ is the restriction of $\lambda$ to $P_{\text{red}}$. This, together with the fact that $L$ is ample if and only if $\sigma^\ast L$ is ample, allows us to conclude.
\end{proof}

\begin{lemma}
\label{chi}
    The character associated to the anticanonical bundle of $G/P$ is given by
    \[
    \chi = \sum_{\gamma \in \Phi^+\backslash \Phi_I} p^{\varphi(\gamma)} \gamma,
    \]
    where $\varphi$ is the associated numerical function to the parabolic subgroup $P$.
\end{lemma}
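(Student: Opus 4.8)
The plan is to reduce the computation of $\chi$ to reading off the $T$-weights that occur on the (co)tangent space of $X = G/P$ at the base point $o$, and then to identify these weights through an explicit description of the big affine cell. Indeed, by the definition of the associated character recalled just before \Cref{ample}, $\chi$ is determined by the action of $T \subset P$ on the fiber of the anticanonical bundle at $o$, which is $\det\, T_o X$. So it suffices to show that $T$ acts on the cotangent space $T_o^\vee X$ with weights $\{\, p^{\varphi(\gamma)}\gamma : \gamma \in \Phi^+ \backslash \Phi_I \,\}$, the sign being fixed so as to recover the classical anticanonical character $\sum_{\gamma \in \Phi^+\backslash \Phi_I}\gamma$ of the reduced flag variety $G/P_{\text{red}}$ (which is exactly the case $\varphi \equiv 0$).

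First I would describe the big cell. The orbit map $R_u^-(P_{\text{red}}) \to X$, $g \mapsto g\cdot o$, is $T$-equivariant for the conjugation action on the source, and identifies a $T$-stable open neighbourhood of $o$ with the quotient $R_u^-(P_{\text{red}})/U_P^-$, where $U_P^- = P \cap R_u^-(P_{\text{red}})$ is the infinitesimal subgroup from (\ref{infinitesimal}). Using the product decomposition (\ref{product}) together with the classical product decomposition of $R_u^-(P_{\text{red}})$ into root subgroups, this cell is $T$-equivariantly a product of the factors $U_{-\gamma}/(U_{-\gamma}\cap P)$ over $\gamma \in \Phi^+\backslash\Phi_I$. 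By the definition of $\varphi$ and \cite[Proposition 8]{Wenzel}, each intersection $U_{-\gamma}\cap P$ is the Frobenius kernel $\ker\bigl(F^{\varphi(\gamma)}|_{U_{-\gamma}}\bigr)$, so the $p^{\varphi(\gamma)}$-power (relative Frobenius) map induces an isomorphism $U_{-\gamma}/(U_{-\gamma}\cap P) \xrightarrow{\ \sim\ } \Ga$.

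The core of the argument is then the weight bookkeeping. If $t_\gamma$ denotes the natural coordinate on $U_{-\gamma}\cong\Ga$, conjugation by $T$ gives it weight $\gamma$; the coordinate on the quotient is $\bar t_\gamma = t_\gamma^{\,p^{\varphi(\gamma)}}$, which therefore carries weight $p^{\varphi(\gamma)}\gamma$. Hence the cotangent space $T_o^\vee X$, spanned by the classes of the $\bar t_\gamma$, has exactly the weights $\{\, p^{\varphi(\gamma)}\gamma\,\}$, and taking determinants yields $\chi = \sum_{\gamma\in\Phi^+\backslash\Phi_I} p^{\varphi(\gamma)}\gamma$ once the sign is fixed as above.

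I expect the main obstacle to be precisely this last weight computation, and the point most in need of care: because the orbit map is purely inseparable along the infinitesimal directions, its differential at $e$ vanishes on $\Lie U_P^-$, so one cannot read the tangent weights off the naive quotient $\mathfrak{g}/\mathfrak{p}$ — doing so would wrongly discard the directions with $\varphi(\gamma)\geq 1$ and entirely miss the scaling. It is the Frobenius twist $\bar t_\gamma = t_\gamma^{\,p^{\varphi(\gamma)}}$ in the cell coordinates that simultaneously restores the correct dimension and produces the factor $p^{\varphi(\gamma)}$; tracking the sign and normalisation convention through \Cref{ample} is then the only remaining bookkeeping.
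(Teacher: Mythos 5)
Your argument is correct and is essentially the paper's own: the paper proves this lemma simply by citing \cite[Proposition 3.1]{Lauritzen} (noting that the proof there is characteristic-free), and what you have written is precisely the big-cell computation underlying that reference — identifying the cell with $\prod_{\gamma}U_{-\gamma}/(U_{-\gamma}\cap P)$ via (\ref{product}), reading off the cotangent weights $p^{\varphi(\gamma)}\gamma$ from the coordinates $t_\gamma^{p^{\varphi(\gamma)}}$, and normalising the sign against the reduced case. Your closing caveat about the purely inseparable orbit map (so that one cannot use $\mathfrak{g}/\mathfrak{p}$ naively) correctly identifies the one point where care is needed.
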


\begin{proof}
    See \cite[Proposition 3.1]{Lauritzen}: the key point is that in the proof there is no assumption on the characteristic.
\end{proof}

\begin{theorem}
\label{notFano}
    Let $n \geq 1$ be a fixed integer.\\
    There are a finite number of isomorphism classes of projective homogeneous varieties of dimension $n$ which are Fano.
\end{theorem}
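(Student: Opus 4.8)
The plan is to pin down the ambient group up to finitely many choices using the dimension, and then to use the ampleness criterion for the anticanonical bundle to bound the infinitesimal thickening of $P$.

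First I would note that $\dim X$ depends only on the reduced part of $P$: by the product decomposition in \eqref{product} the factor $U_P^-$ is infinitesimal, hence zero-dimensional, so $\dim G/P = \dim G/P_{\text{red}} = |\Phi^+ \setminus \Phi_I^+|$ with $P_{\text{red}} = P_I$. I would then bound the rank of $G$. Writing $X$ as a product over the simple factors, as in \Cref{Gsimple}, each nontrivial factor contributes at least $1$ to the dimension, so there are at most $n$ of them; and for a simple factor, any $\alpha \in \Delta \backslash I$ gives $P_I \subseteq P^\alpha$, whence $\dim G/P_I \geq \dim G/P^\alpha \geq \rank G$, the last inequality because the smallest flag variety of Picard rank one attached to a simple group of rank $\ell$ is $\proj^\ell$, of dimension $\ell$. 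Thus $\rank G \leq n$, and only finitely many pairs $(G,I)$ occur.

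Next I fix such a pair $(G,I)$. By \Cref{mainnnn} together with \Cref{kerphi}, the parabolic $P$ is determined by its numerical function, which satisfies $\varphi(\gamma) = \min\{\varphi_i(\gamma) : \alpha_i \in \Supp(\gamma)\}$, where each $\varphi_i$ is governed by a single isogeny $\xi_i$ with no central factor; by \Cref{rosa_nero_verde}(c) the function $\varphi_i$ is determined, up to the length-dependent shift by one, by the single integer $m_i \defeq \varphi(\alpha_i)$. Hence $P$ is encoded by the finite tuple $(m_i)_{\alpha_i \in \Delta \backslash I}$ together with a bounded amount of discrete data (presence of a very special factor). Applying \Cref{nokernel}, I may assume that $P$ contains no kernel of an isogeny with no central factor on any simple factor; since $m_i \geq 1$ for all $i$ would force $P \supseteq N$ (or $P \supseteq G^1$) by the chain of \Cref{minimale}, this normalisation forces $\min_i m_i = 0$. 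It therefore suffices to bound $M \defeq \max_i m_i$ in terms of $n$ and $p$.

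The Fano condition carries the main content. By \Cref{chi} the anticanonical character is $\chi = \sum_{\gamma \in \Phi^+ \backslash \Phi_I} p^{\varphi(\gamma)}\gamma$, and by \Cref{ample} the variety is Fano precisely when $(\chi,\alpha) > 0$ for every $\alpha \in \Delta \backslash I$. The key observation is that, for a simple root $\beta$, every root $\gamma$ with $\varphi(\gamma) > \varphi(\beta)$ has support avoiding $\beta$, so $(\gamma,\beta) \leq 0$: in the pairing $(\chi,\beta)$ all higher‑height contributions are nonpositive, while the height $\leq \varphi(\beta)$ part is at most $C(G)\,p^{\varphi(\beta)}$. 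Now take a geodesic in the Dynkin diagram joining a root of height $0$ to one of height $M$, and two consecutive roots $\beta,\beta'$ of $\Delta \backslash I$ along it, so that the intermediate nodes lie in $I$. The highest root $\gamma_0$ of the subsystem with support $\{\beta'\} \cup (\text{intermediate } I\text{-nodes})$ avoids $\beta$, has $\varphi(\gamma_0) = m_{\beta'}$, and satisfies $(\gamma_0,\beta) < 0$ because only the neighbour of $\beta$ on the geodesic is adjacent to $\beta$. Pairing $\chi$ with the lower of the two and comparing the dominant negative term with the bounded positive part yields $p^{\,|m_\beta - m_{\beta'}|} < C(G,p)$, i.e. each consecutive gap is bounded; telescoping along the $\leq |J| \leq n$ such gaps gives $M \leq n\log_p C(G,p)$. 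I expect this estimate — and its robustness under the $\pm 1$ shifts of \Cref{rosa_nero_verde}(c), which only perturb the relevant exponents by bounded multiplicative factors, together with the low‑rank exotic type $G_2$ cases involving $P_{\mathfrak h}$ and $P_{\mathfrak l}$ — to be the main obstacle, the rest being the routine facts that connected supports carry roots and that adjacent simple roots pair negatively.

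Combining the two bounds, for each of the finitely many pairs $(G,I)$ there are finitely many normalised parabolics $P$ with $G/P$ Fano, since the tuple $(m_i)$ has all entries in $[0,M]$ and the remaining data is finite. As every isomorphism class of Fano homogeneous variety of dimension $n$ arises from such a normalised pair, there are finitely many of them, which is the assertion.
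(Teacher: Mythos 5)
Your proposal is correct and follows essentially the same route as the paper: bound the group by the dimension, then combine the anticanonical character $\chi=\sum_{\gamma}p^{\varphi(\gamma)}\gamma$ of Lemma~\ref{chi} with the ampleness criterion of Lemma~\ref{ample} and a root supported away from, but adjacent to, a low-height simple root (your $\gamma_0$ is exactly the $\delta$ of Lemma~\ref{delta}) to show that a large jump in the heights $m_i$ destroys the Fano property. The differences are organisational rather than substantive --- you bound the rank via $\dim G/P^\alpha\geq\rank G$ instead of torus stabilisers, and you telescope consecutive gaps along a Dynkin geodesic instead of exhibiting one large gap in the ordered chain of kernels --- and the points you defer (the $\pm1$ shifts of \Cref{rosa_nero_verde}(c) and the exotic $G_2$ parabolics, which the paper treats explicitly in its Step 3) do go through as you expect.
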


\begin{proof}
    Such varieties are all of the form
    \[
    X= G/P
    \]
    where $G$ is semisimple, simply connected and $P$ is a parabolic subgroup.\\
    Up to replacing the adjoint quotient of $G$
    with its image into the automorphism group of $X$, we can assume that $G$ acts on $X$ with a finite kernel. In particular, the same is true for a maximal torus $T \subset G$; this implies that the stabiliser
    \[
    \Stab_T(x)
    \]
    is finite for a general point $x \in X$ and thus that
    \[
    n = \dim X \geq \dim T = \rank G.
    \]
    Summarizing, we have just proved that the rank of $G$ is bounded by the dimension of $X$, which is fixed and equal to $n$. Any such $G$ is semisimple and simply connected, thus product of simple factors; there are finitely many isomorphism classes of such groups with rank less or equal than $n$, thus there are finitely many possibilities for $G$.\\
    Next, we fix a Borel subgroup $B$: we can assume that $P$ contains $B$ and moreover, that $P$ does not contain the kernel of any isogeny with no central factor.\\
    \textbf{Step 1}: if the reduced part of $P$ is maximal, then by \Cref{kerphi}, $X$ is either isomorphic to a flag variety with reduced stabiliser, which is in particular Fano, or to the exotic variety with stabiliser $P_{\mathfrak{l}}$ in type $G_2$ (see \Cref{rank1_G} for this case): these are a finite number of non-isomorphic varieties, so we can exclude them.\\
    \textbf{Step 2}: Let us assume that $P$ is quasi-standard: then we have
    \[
    \Delta \backslash I = \{\beta_1 , \ldots,\beta_r\}, \quad \text{where } P_{\text{red}} = P_I.
    \]
    Up to re-arranging $\beta_1,\ldots,\beta_r$, we can find isogenies $\xi_2,\ldots , \xi_r$ with source $G$, uniquely determined by the conditions 
    \[
    \langle P,P^{\beta_i}\rangle = (\ker \xi_i) P^{\beta_i} \quad \text{and} \quad \ker \xi_2 \subseteq \ldots \subseteq \ker \xi_r,
    \]
    such that we can write $P$ as
    \[
    P = P^{\beta_1} \cap (\ker \xi_2 )P^{\beta_2} \cap \ldots \cap (\ker \xi_r) P^{\beta_r}.
    \]
    We can assume that $r\geq 2$, because the case of $r=1$ has been treated in Step 1. Let us consider a positive integer $m$, big enough such that it satisfies
    \begin{align}
    \label{emme}
    p^m > H \defeq \frac{\max_{\alpha \in \Delta} \sum_{\alpha \in \Supp(\gamma)} \vert (\gamma,\alpha) \vert} {\min_{\alpha \in \Delta, (\gamma,\alpha)<0} \vert (\gamma,\alpha)\vert}.
    \end{align}
By Remark \ref{minimale}, if we exclude a \emph{finite} number of cases, we can assume that there is some $i<r$ and some $m_0 \in \N$ such that
\begin{align}
\label{incidenza}
    \ker \xi_2 \subseteq \ldots \subseteq \ker \xi_i \subseteq G^{m_0} \quad \text{and} \quad G^{m_0+m} \subseteq \ker \xi_{i+1}\subseteq \ldots \subseteq \ker\xi_r.
\end{align}
\\
\textbf{Claim}: if $P$ satisfies (\ref{incidenza}), then $X$ is not Fano.\\
Let us write $\Phi^+\backslash \Phi_I$ as the disjoint union of $\Psi_{\geq}$ and $\Psi_{\leq}$, defined as:
\begin{align*}
    \Psi_{\geq} & \defeq\{ \gamma \in \Phi^+\backslash \Phi_I, \,\, \Supp(\gamma) \cap \{ \beta_1,\ldots, \beta_i\} = \emptyset \};\\
    \Psi_{\leq} & \defeq \{ \gamma \in \Phi^+\backslash \Phi_I, \,\, \exists j\leq i \text{ such that } \beta_j \in \Supp(\gamma)\}.
\end{align*}
The condition (\ref{incidenza}) implies that
\[
\varphi(\gamma) \leq m_0 \text{ for } \gamma \in \Psi_\leq, \quad \varphi(\gamma) \geq m_0+m \text{ for } \gamma \in \Psi_\geq.
\]
Let us fix some $\beta_l$ with $l\leq i$ and some $\delta \in \Psi_\geq$ such that
\[
(\delta,\beta_l) = -s \quad \text{for some } s>0,
\]
which exists thanks to Lemma \ref{delta} below.  Let us notice that $(\gamma,\beta_l) >0$ implies that $\beta_l \in \Supp(\gamma)$, hence
\[
(\gamma,\beta_l) \leq 0 \quad \text{for all } \gamma \in \Psi_\geq.
\]
On the other hand, let us write $\Psi_{\leq}$ as the disjoint union of 
\begin{align*}
    \Psi_{\leq}^- & \defeq\{ \gamma \in \Psi_{\leq}, \,\, (\gamma,\beta_l) \leq 0\};\\
    \Psi_{\leq}^+ & \defeq\{ \gamma \in \Psi_{\leq}, \,\, (\gamma,\beta_l) > 0\} \subset \{ \gamma \in \Phi^+, \,\, \beta_l \in \Supp(\gamma)\}.
\end{align*}
By Lemma \ref{ample} above, to prove that $X$ is not Fano it is enough to show that
\begin{align}
    \label{negativo}
    (\chi,\beta_l)<0,
\end{align}
where $\chi$ is the character associated to the anticanonical bundle of $X$.\\
By Lemma \ref{chi}, we get 
\begin{align*}
     (\chi,\beta_l) & = \sum_{\gamma \in \Psi_\leq} p^{\varphi(\gamma)} (\gamma,\beta_l) + \sum_{\gamma \in \Psi_\geq} p^{\varphi(\gamma)} (\gamma,\beta_l) \leq \sum_{\gamma \in \Psi_\leq^+} p^{\varphi(\gamma)}(\gamma,\beta_l) + \sum_{\gamma \in \Psi_\geq} p^{\varphi(\gamma)} (\gamma,\beta_l) \\
    & \leq p^{m_0} \sum_{\gamma \in \Psi_\leq^+ } (\gamma,\beta_l) + p^{m_0+m}  \sum_{\gamma \in \Psi_\geq} (\gamma,\beta_l) \leq p^{m_0} \left( \sum_{\gamma \in \Psi_\leq^+} (\gamma,\beta_l) - p^m s \right)
\end{align*}
Let us consider the integer
\[
N \defeq \sum_{\gamma \in \Psi_\leq^+} (\gamma,\beta_l) \leq \sum_{\gamma \in \Phi^+, \, \beta_l \in \Supp(\gamma)} \vert (\gamma,\beta_l)\vert 
\]
Then, by the assumption (\ref{emme}) we have that
\[
p^m > H \geq N/s,
\]
hence
\[
(\chi,\beta_l) \leq p^{m_0} (N-p^ms) <0
\]
and we have proved (\ref{negativo}).\\
\textbf{Step 3}: The last case to treat is the one of a group $G$ of type $G_2$ in characteristic $p=2$ and of a parabolic subgroup which is not standard. By Step 1 above, we can assume that the reduced part of $P$ is the Borel $B$. By \Cref{cor}, the variety $X$ is isomorphic to exactly one $G/P$ with $P$ belonging to the following list:
\[
B, \quad G^m P^{\alpha_1} \cap P^{\alpha_2}, \quad P^{\alpha_1} \cap G^m P^{\alpha}, \quad P_{\mathfrak{h}} \cap G^m P^{\alpha_2}, \quad P_{\mathfrak{l}}\cap G^m P^{\alpha_2}, \text{ for } m \geq 1.
\]
Clearly, the variety $G/B$ is Fano. Next, let us assume that $P$ is nonreduced and that $m \geq 2$, thus excluding a finite number of varieties: under this assumption, we claim that $X$ is not Fano. In order to make computations, let us recall that we have
\[
(\alpha_1,\alpha_1) = 2,\quad (\alpha_1,\alpha_2) =-3, \quad (\alpha_2,\alpha_2) = 6.
\]
Let us proceed by a case-by-case analysis, using the canonical bundle formula of Lemma \ref{chi}.\\
$\bullet$ If $P =  G^m P^{\alpha_1} \cap P^{\alpha_2}$, then
\[
\varphi(\alpha_1) = m \quad \text{and} \quad \varphi = 0 \text{ on } \Phi^+ \backslash \{\alpha_1\}. 
\]
Thus 
\[
(\chi,\alpha_2) = ((2^m+9)\alpha_1+6\alpha_2,\alpha_2) = 9-3\cdot 2^m <0.
\]
$\bullet$ If $P =  P^{\alpha_1} \cap G^m P^{\alpha_2}$, then
\[
\varphi(\alpha_2) = m \quad \text{and} \quad \varphi = 0 \text{ on } \Phi^+ \backslash \{\alpha_2\}. 
\]
Thus 
\[
(\chi,\alpha_1) = (10\alpha_1+(5+2^m)\alpha_2,\alpha_1) = 5-3\cdot 2^m <0.
\]
$\bullet$ If $P =  P_{\mathfrak{h}} \cap G^m P^{\alpha_2}$, then
\[
\varphi(\alpha_2) = m, \quad \varphi(2\alpha_1+\alpha_2) = 1 \quad  \text{and} \quad \varphi = 0 \text{ on } \Phi^+ \backslash \{\alpha_2,2\alpha_1+\alpha_2\}. 
\]
Thus 
\begin{align}
\label{neg}
(\chi,\alpha_1) = ((4+8)\alpha_1+(2^m+6)\alpha_2,\alpha_1) = 6-3\cdot 2^m <0.
\end{align}
$\bullet$ If $P = P_{\mathfrak{l}} \cap G^mP^{\alpha_2}$, then
\[
\varphi(\alpha_2) = m, \quad \varphi(\alpha_1)= \varphi(\alpha_1+\alpha_2) = 1 \quad  \text{and} \quad \varphi = 0 \text{ on } \Phi^+ \backslash \{\alpha_2,\alpha_1,\alpha_1+\alpha_2\}. 
\]
Then we get the exact same computation as in (\ref{neg}) and we can conclude.
\end{proof}

\begin{lemma}
\label{delta}
    Let $1\leq i < r$ and consider a partition of simple roots as follows:
    \[
    \Delta \backslash I = \{\beta_1 \ldots \beta_i\} \cup \{ \beta_{i+1}\ldots \beta_r\}.
    \]
    Then there is some $l\leq i$ and some $\delta \in \Phi^+\backslash \Phi_I$ such that
    \[
    \Supp(\delta) \cap \{ \beta_1,\ldots, \beta_i\} = \emptyset \quad \text{and} \quad (\delta,\beta_l) <0.
    \]
\end{lemma}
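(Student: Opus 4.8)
The plan is to exploit the connectedness of the Dynkin diagram of $G$, which is simple, together with the fact that both blocks of the partition are nonempty (since $1 \le i < r$). Writing $A \defeq \{\beta_1,\ldots,\beta_i\}$ and $C \defeq \{\beta_{i+1},\ldots,\beta_r\}$, the first thing I would point out is that $\delta$ cannot in general be taken to be a simple root: if every diagram-neighbour of the roots in $A$ happens to lie in $I$, then such a neighbour would sit in $\Phi_I$ and be excluded by the requirement $\delta \in \Phi^+\backslash\Phi_I$. The right object to produce is instead a root whose support is a connected segment running from a neighbour of some $\beta_l$ through $I$ until it reaches some $\beta_j$ with $j>i$.

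Concretely, I would choose a shortest path in the Dynkin diagram joining a vertex of $A$ to a vertex of $C$, say $\beta_l = v_0, v_1, \ldots, v_k = \beta_j$ with $\beta_l \in A$ and $\beta_j \in C$; such a path exists because the diagram is connected and both $A$ and $C$ are nonempty. By minimality of the path, none of $v_1,\ldots,v_k$ lies in $A$ (any such occurrence would yield a strictly shorter path from $A$ to $C$), so they all lie in $J \defeq \Delta\backslash A$, the interior vertices in fact lying in $I$. Setting $S \defeq \{v_1,\ldots,v_k\}$, this is a connected subset of $J$ containing $\beta_j$, hence $\Phi_S$ is an irreducible root subsystem, and I take $\delta$ to be its highest root.

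It then remains to verify the three required properties. Since the highest root of an irreducible system involves every simple root with a strictly positive coefficient, one has $\Supp(\delta)=S$; this set is disjoint from $A$ and contains $\beta_j$, which gives simultaneously $\Supp(\delta)\cap\{\beta_1,\ldots,\beta_i\}=\emptyset$ and $\delta\in\Phi^+\backslash\Phi_I$. For the sign, I expand $(\delta,\beta_l)=\sum_{\nu\in S} c_\nu(\nu,\beta_l)$ with all $c_\nu>0$: each $\nu\in S$ is a simple root distinct from $\beta_l$, so $(\nu,\beta_l)\le 0$, while $v_1$ is adjacent to $\beta_l=v_0$ on the path and therefore contributes $(v_1,\beta_l)<0$; hence $(\delta,\beta_l)<0$.

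The only genuinely delicate point, and the step I would set up most carefully, is the choice of $S$: one must route through $I$ far enough to reach $C$ so that $\delta\notin\Phi_I$, while keeping $\Supp(\delta)$ entirely clear of the first block $\{\beta_1,\ldots,\beta_i\}$, and it is precisely the shortest-path selection that guarantees $S\cap A=\emptyset$. Everything else reduces to the two standard facts that distinct simple roots pair non-positively and that the highest root of an irreducible root system has full, strictly positive support.
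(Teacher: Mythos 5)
Your proof is correct and follows essentially the same route as the paper: both select a shortest path in the Dynkin diagram from the first block $\{\beta_1,\ldots,\beta_i\}$ to the second and take for $\delta$ a positive root supported exactly on that path with the first-block endpoint removed. The only immaterial difference is the specific root chosen --- the paper takes $\delta$ to be the sum of the simple roots along the segment, while you take the highest root of the subsystem they generate; both have the same support and pair strictly negatively with $\beta_l$.
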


\begin{proof}
    Let us fix
    \[
    \nu \in \{\beta_1,\ldots,\beta_i\} \quad \text{and} \quad \mu \in \{ \beta_{i+1},\ldots,\beta_r \},
    \]
    such that the couple $(\nu, \mu)$ realises the minimum of the distance between the corresponding nodes in the Dynkin diagram. In particular, there is a connected segment (of minimal length) of nodes $J \subset \Delta$, having as extremes the nodes $\nu$ and $\mu$; either these two are adjacent, or the nodes in between them are all simple roots belonging to $I$. Let us consider the interior
    \[
    K \defeq J\backslash \{ \nu,\mu\} \subset I.
    \]
    Then, we can set
    \[
    \beta_l \defeq \nu \quad \text{and} \quad \delta \defeq \mu + \sum_{\alpha \in K} \alpha.
    \]
    Since either $\mu$ or some root of $K$ is adjacent to $\nu$, and moreover $\nu$ is not in the support of $\delta$, we can conclude that $(\delta,\beta_l)$ is strictly negative.
\end{proof}



\end{document}